\documentclass{article}
\usepackage{relsize}

\usepackage[dvipsnames]{xcolor}

\usepackage[english]{babel}

\usepackage[letterpaper]{geometry}

\usepackage{rotating}

\usepackage{amsfonts}
\usepackage{amsmath}
\usepackage{amsthm}
\usepackage{graphicx}
\usepackage[colorlinks=true, allcolors=blue]{hyperref}
\usepackage{captdef}
\usepackage{caption}
\usepackage{comment}
\usepackage{hyperref}

\usepackage{tikz}

\newcommand{\R}{\mathbb{R}} 
\newcommand{\C}{\mathbb{C}} 
\newcommand{\Q}{\mathbb{Q}} 
\newcommand{\Z}{\mathbb{Z}} 

\usepackage{enumitem}
\usepackage{pifont}

\usepackage{array}

\newtheorem{defi}{Definition}[section]
\newtheorem{prop}{Proposition}[section]
\newtheorem{lemma}{Lemma}[section] 
\newtheorem{thm}{Theorem}[section]
\newtheorem{corr}{Corollary}[section]
 
\newtheorem{rmk}{Remark}

\newtheorem*{thmN}{Theorem}

\title{Outer contact billiards}

\author{Ana Chavez-Caliz\footnote{
Instituto de Matem\'aticas, 
Universidad Nacional Aut\'onoma de M\'exico,
62210 Cuernavaca, 
Mexico; 
ana.chavez@im.unam.mx} 
\and 
Connor Jackman\footnote{
Department of Mathematics, 
Instituto Tecnol\'ogico Aut\'onomo de M\'exico, 
01080 Mexico City,
Mexico; 
connor.jackman@itam.mx}}
\date{\today}

\begin{document}

\maketitle
\begin{abstract}
    We introduce outer contact billiards as an odd dimensional counterpart to outer symplectic billiards. 
    Until now, outer billiards have only been considered in even dimensional symplectic vector spaces. By projectivizing outer symplectic billiards we obtain outer contact billiards, where the affine midpoint condition descends to its projective analog, namely harmonic conjugation. 

   We prove that outer contact billiards generates contactomorphisms.  
    For quadratic surfaces in $\mathbb{RP}^3$, we show that the 
    correspondence is completely integrable: its domain is foliated by invariant quadrics 
    and, on each leaf, the dynamics is determined by the iteration of an explicit linear 
    transformation. We also establish two rigidity results for periodic trajectories: 
    outer contact billiards admit no $3$-periodic orbits and, among quadratic tables, only 
    one admits $4$-periodic orbits.
    
\end{abstract}

\tableofcontents

\section{Introduction}

In 1959, B. Neumann (\cite{neumann1959sharing}) introduced the outer billiard map, a 
dynamical system defined outside a convex planar domain. Unlike the classical Birkhoff 
billiard, where the trajectory evolves inside the table by reflecting at its boundary, 
the outer billiard map acts on the exterior of the table: a point is sent to its 
reflection across a tangency point of the table (see Figure \ref{fig:outerBilliard}). 
Motivated by questions concerning the stability of the solar system, J. Moser continued 
the study of the subject in the 1970s (\cite{Mo1}, \cite{Mo2}), initiating a long line of research that continues to this day. 

In \cite{tabachnikov1995dual}, S. Tabachnikov introduced outer symplectic billiards, 
extending the construction from the plane to arbitrary linear symplectic spaces. In this 
setting, the characteristic direction determined by the symplectic structure replaces the 
tangent line of the planar construction, yielding a natural higher-dimensional analog of 
outer billiards. This topic has been further studied in \cite{tabachnikov2003three}, 
\cite{albers2024outer}, and \cite{albers2026outer}, among others. 

\begin{center}
    \includegraphics[width=0.4\linewidth]{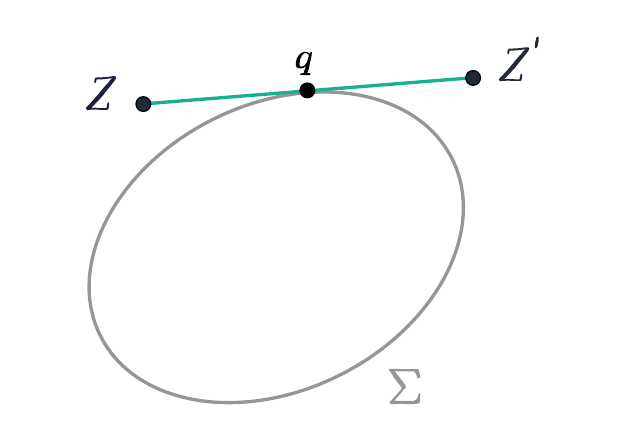}
    \figcaption{The outer billiard map in the plane.}
    \label{fig:outerBilliard}
\end{center}
Since symplectic manifolds are necessarily even-dimensional, the present paper addresses 
the following natural question: {\em what should outer billiards look like in odd 
dimensions?}\\

Following the Arnold-Givental philosophy: that contact geometry is to symplectic geometry as projective geometry is to affine geometry (see \cite{EDSIV}, \cite{tabachnikov1993geometry}), we propose an answer to this question. This analogy suggests that the outer symplectic billiards should admit a projective counterpart. Starting with a symplectic vector space $(V, \omega)$, consider its projectivization $\mathbb{P}V$. Its projectivization carries a canonical contact structure: the contact hyperplane at a given line through the origin is its symplectic complement. From this perspective, the passage from outer symplectic billiards to outer contact billiards amounts to replacing the affine ingredient of the construction, the midpoint condition, with its projective counterpart, namely harmonic conjugation. In other words, outer contact billiards are a scaling reduction of outer symplectic billiards (see, for example, \cite{ContRed}). This leads to a correspondence in projective space, which we call the {\em outer contact billiard correspondence}: given a hypersurface $\Sigma$, a pair of points, $Z,Z'\in\mathbb{P}V$, are in correspondence over $\Sigma$ if, first of all, they lie on some characteristic line of $\Sigma$. For instance, in $\mathbb{RP}^3$ the characteristic lines of a surface are the lines of intersection between the tangent and contact planes as the points run over the surface. Furthermore, each characteristic line of $\Sigma$ contains two distinguished points: $q,q'$ (see Definition \ref{def:CharLine}) and we ask the points $Z,Z'$, to be harmonic conjugates with respect to these distinguished points $q, q'$ (see Definition 
\ref{def:OutContBill}, or Figure \ref{fig:contactcorrespondence} below).


Here we will describe this correspondence explicitly when $\Sigma\subset\mathbb{RP}^3$ is a quadratic surface. Also we show in general 
(see \S \ref{sec:GenThms} below for more precise statements) that:
\begin{thmN}
    The outer contact billiard correspondence generates local contactomorphisms. Moreover, it is the only choice of outer billiard generating local contactomorphisms over any table.
\end{thmN}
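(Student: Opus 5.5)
We will deduce the theorem from the corresponding symplectic statement by scaling reduction. Let $(V,\omega)$ be a symplectic vector space of dimension $2n$, let $\pi:V\setminus\{0\}\to\mathbb{P}V$ be the projection, and let $\Sigma\subset\mathbb{P}V$ be a hypersurface. Its cone $\widehat\Sigma=\pi^{-1}(\Sigma)$ is a conic hypersurface in $V$.

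\textbf{Step 1: lift the correspondence.} The contact hyperplane at $[v]$ is the image of $v^{\omega}$. Hence the characteristic line of $\Sigma$ at $[q]$ is the projection of the plane spanned by $q$ and the symplectic-characteristic direction $J\nabla H(q)$ of $\widehat\Sigma$, where $H$ is a homogeneous defining function. On the outer symplectic side, the characteristic line through $q$ meets $\widehat\Sigma$ in a second point $\lambda q'$. The pair $\{q,q'\}$ of Definition~\ref{def:CharLine} should be exactly the projection of this pair of tangency points. We then check that harmonic conjugation of $Z,Z'$ with respect to $q,q'$ is what remains of the midpoint relation $z+z'=2\cdot(\text{point of }\widehat\Sigma)$ after quotienting by scaling. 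Concretely, if $z'=2p-z$ with $p\in\widehat\Sigma$ on the characteristic plane, then the cross-ratio of $([z],[z'];[q],[q'])$ is $-1$. This is a one-line computation in the two-dimensional plane $\mathrm{span}(q,J\nabla H)$.

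\textbf{Step 2: contactomorphism.} The outer symplectic billiard on $\widehat\Sigma$ is a local symplectomorphism (Tabachnikov) and commutes with the scaling $z\mapsto tz$, since $\widehat\Sigma$ is a cone. Its generating function is therefore homogeneous of degree $2$ and it preserves the Liouville field $E$. A symplectomorphism $F$ with $F_*E=E$ preserves the Liouville form $\iota_E\omega$ up to an exact term that vanishes by homogeneity. It therefore descends to a map of $\mathbb{P}V$ preserving the contact distribution $\ker(\iota_E\omega)$, which gives a local contactomorphism. Branches of the correspondence correspond to the choice of tangency point $q$ versus $q'$, and we treat each branch separately. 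The main obstacle is here: the symplectic billiard on a cone is degenerate, since the cone is not strictly convex along its rulings. We must verify that the lifted map is well defined on an open set, up to scaling, and transversal. If this fails, we would instead prove directly that the graph of the correspondence is Legendrian in $\mathbb{P}V\times\mathbb{P}V$ with the contact structure $\alpha_1-\mu\alpha_2$. We would do this by computing in the coordinates adapted to the characteristic plane.

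\textbf{Step 3: uniqueness.} Suppose another outer billiard keeps the characteristic line and sends $Z$ to $Z'=\phi_q(Z)$, a projective rule on that line. We linearise at a tangency point for an arbitrary table. The second-order jet of $\Sigma$ at $q$ is arbitrary, so requiring the contact condition for all tables forces $\phi_q$ to be an involution fixing $q$ and $q'$. The only such nontrivial projective involution of the line is harmonic conjugation. We expect to verify this by testing on quadrics, where the correspondence is explicit, and varying the quadric.
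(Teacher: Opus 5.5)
The genuine gap is Step 3. Your claim that the contact condition for all tables ``forces $\phi_q$ to be an involution fixing $q$ and $q'$'' is the whole content of the uniqueness statement, and nothing you write produces it. Testing on quadrics cannot supply it either, since for a non-harmonic rule you have no explicit formula there. Restricting to rules that are projective on the line is also an assumption the statement does not make.

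The missing step is a short computation on lifts. Normalize $\boldsymbol{Z}=\boldsymbol{q}+\boldsymbol{q}'$ and write an arbitrary rule as $\boldsymbol{Z}'=\boldsymbol{q}+\beta\boldsymbol{q}'$, with $\beta$ any function. Since $q\in T_q\Sigma=\xi_{q'}$ and $\dot q\in T_q\Sigma$, you have $\omega(\boldsymbol{q},\boldsymbol{q}')=0$ and $\omega(\dot{\boldsymbol{q}},\boldsymbol{q}')=0=\omega(\boldsymbol{q},\dot{\boldsymbol{q}}')$. Hence every $\dot\beta$-term drops out and
\[
\omega(\boldsymbol{Z}',\dot{\boldsymbol{Z}}')=\omega(\boldsymbol{Z},\dot{\boldsymbol{Z}})+(\beta^2-1)\,\omega(\boldsymbol{q}',\dot{\boldsymbol{q}}').
\]
The last factor depends on the $2$-jet of $\Sigma$, which is where your jet intuition belongs. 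For a general table it is not proportional to $\omega(\boldsymbol{q},\dot{\boldsymbol{q}})$, so the two pulled-back contact forms are proportional only when $\beta=\pm1$: the trivial rule or harmonic conjugation. With $\beta=-1$ the same identity is the paper's entire proof of the first half.

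Step 1 contains a false identification. The characteristic line does not meet $\widehat\Sigma$ a second time at $\lambda\boldsymbol{q}'$. By Proposition~\ref{prop:SGrad}, $q'=[X_H(\boldsymbol{q})]$ lies on the polar surface $\Sigma'$, generally not on $\Sigma$; for the Clifford torus $\Sigma_c$ this happens only when $c=1$. In $V$, $q'$ is the direction (point at infinity) of the affine line $\boldsymbol{q}+tX_H(\boldsymbol{q})$, which never meets $\R\boldsymbol{q}'$. Accordingly, the branches of the correspondence are not ``$q$ versus $q'$''. They are the different points of $\Sigma$ whose characteristic lines pass through $Z$.

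Your concrete check is nevertheless right once $p=\boldsymbol{q}$. The points $p\pm tX_H(p)$ project to a harmonic pair with respect to $[p]$ and $[X_H(p)]$, and conversely. So the contact correspondence is exactly the projectivized outer symplectic correspondence on the cone.

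With that fixed, Step 2 is a valid alternative to the paper's direct computation. A symplectic $F$ with $F_*E=E$ satisfies $F^*(\iota_E\omega)=\iota_E\omega$ exactly, with no exact term to remove. This is precisely the identity $\omega(\boldsymbol{Z},\dot{\boldsymbol{Z}})=\omega(\boldsymbol{Z}',\dot{\boldsymbol{Z}}')$ that the paper verifies in two lines.

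The ``main obstacle'' you flag is not one. Tabachnikov's symplecticity computation is local and uses no convexity. It needs only that $\omega(\cdot,X_H)=dH$ vanishes on $T\widehat\Sigma$ and that $\omega(\cdot,DX_H\,\cdot)$ is symmetric. Moreover, $(p,t)\mapsto p+tX_H(p)$ is scaling-equivariant, so the lifted correspondence is a local diffeomorphism exactly where the projected one is, which is the standing hypothesis anyway. The paper's route is shorter and, unlike the lift to the cone, applies verbatim to the non-harmonic rules that the uniqueness statement has to rule out.
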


Besides its simple geometric nature, this construction also has a natural dynamical 
motivation. A hypersurface in a contact manifold inherits a certain characteristic line field, i.e.,~first-order differential equation, whose integral curves play a central role in 
contact geometry and appear naturally, for example, in describing blow-ups of 
singularities in mechanical systems (see e.g.~\cite{SloanDS}, \cite{ContRed}). The outer contact billiard correspondence 
provides discrete dynamics built along these same characteristic lines. In other words, contact billiards is a certain type of contact integrator (see e.g.~\cite{DiscHerg}, \cite{ContInt}) well adapted to the projective geometry of our situation. One expects contact billiard orbits to capture qualitative features of the corresponding integral 
curves, a behavior which is already apparent in the quadratic examples considered in 
Section~\ref{sec:Exs}.\\

The paper is organized as follows. In Section~\ref{sec:defi}, we recall the projective 
geometric notions needed to define the correspondence. Section~\ref{sec:Exs} contains some 
explicit examples associated with quadratic hypersurfaces, illustrating both the 
similarities and the differences between outer contact and outer symplectic billiards. 
We show that every quadratic example is completely integrable: the domain is filled by 
invariant leaves, and on each one, the billiard map is described by a fixed linear 
transformation. Appendix~\ref{sec:Quads} shows that, up to a change of symplectic 
coordinates, our list of examples covers all quadratic tables in $\mathbb{RP}^3$. In 
Section~\ref{sec:periodic}, we study periodic orbits; in particular, we prove that the 
correspondence admits no three-periodic orbits (Proposition \ref{prop:No3RP3}), and that 
there is only one quadratic table admitting a $4$-periodic orbit in $\mathbb{RP}^3$ 
(Proposition \ref{prop:4PerQuad}). Section~\ref{sec:GenThms} establishes that the 
correspondence preserves the ambient contact structure (Theorem \ref{thm:ContTransf}). 
Although projective geometry has no intrinsic notion of midpoint, two natural settings do: 
affine charts and a spherical chart. We therefore ask for which hypersurfaces the harmonic 
conjugate construction agrees with the corresponding midpoint construction used in outer 
symplectic billiards in these models. Theorems \ref{thm:AffTable} and \ref{Thm:CplxTable} 
provide complete characterizations in the affine and spherical settings, respectively. 
Finally, Section~\ref{sec:questions} collects several open problems. We hope that the 
present work serves as a first step toward a broader theory of outer contact billiards.\\

\textbf{Acknowledgments.} ACC was supported by the DFG through Project-ID 281071066 – TRR 
191, and by the Simons Foundation International through the grant SFI-MPS-T-Institutes-
00011977 JS.

CJ was supported by the DFG through Project-ID 281071066 – TRR 191, and by the Asociación 
Mexicana de Cultura, A.C. \\

We thank Peter Albers, Sergei Tabachnikov, Lina Deschamps, and Levin Maier for their valuable 
insights. We are also grateful to Alessandro Bravetti, Gil Bor, Richard Montgomery, Sergio Zamora Barrera, and Orsola 
Capovilla-Searle for their generous and fruitful discussions.


\section{Outer contact billiards}
\label{sec:defi}

We first fix notation and recall some standard notions from projective (see e.g.~\cite{pottmann2001computational}), and contact geometry (see e.g.~\cite{EDSIV}).\\

Let $V$ be a vector space. Its projective space $\mathbb{P}V$ is the space of lines 
through the origin of $V$. We denote projective points by $Z \in \mathbb{P}V$ and write 
$\boldsymbol{Z} \in Z \setminus \{0\}\subset V$ for a choice of lift. \\

If $A,B,C,D \in \mathbb{P}V$ are collinear, one can always choose lifts 
$\boldsymbol{A}, \boldsymbol{B}, \boldsymbol{C}, \boldsymbol{D} \in V$ so that 
$\boldsymbol{C}= \boldsymbol{A}+\boldsymbol{B}$. Such lifts are unique up to simultaneous 
rescaling. If $\boldsymbol{D}=\lambda\boldsymbol{A}+\eta\boldsymbol{B}$ then the quotient
\[\text{cr}(A,B,C,D):=\dfrac{\lambda}{\eta},\]
is independent of the chosen lifts and its called the \emph{cross ratio} of $A, B, C, D$. In some affine coordinate along the line containing the quadruple of collinear points it is given by the usual formula $\text{cr}(A,B,C,D) = \frac{(A-C)(B-D)}{(A-D)(B-C)}$.

The points $C \text{ and } D$ are said to be \emph{harmonic conjugates} with respect to
$A \text{ and } B$ when $\text{cr}(A,B, C, D)=-1$, or equivalently, if we can write 
$\boldsymbol{C}= \boldsymbol{A}+\boldsymbol{B}$ and $\boldsymbol{D}=\boldsymbol{A}-\boldsymbol{B}$ (see Figure \ref{fig:HarmonicConjugates}). \\

\begin{figure}[h]
\centering
\begin{tikzpicture}  
    
    \draw[red,thick] (-3,0) -- (3,0); 
    \draw[red,thick] (-.75,-1.5) -- (.75,1.5); 
    
    \draw[blue,thick] (-1,1) -- (1, -1); 
    \draw[blue,thick] (-2,-1) -- (2, 1); 

    \filldraw[black] (0,0) circle (2pt);

    \draw (-.5,.5) -- (1,.5);
    \draw (-.12,.6) -- (-.12,.4);
    \draw (.62,.6) -- (.62,.4);

    \node at (3.3,0) {$A$};
    \node at (1,1.5) {$B$};
    \node at (-1.3,1) {$C$};
    \node at (-2.3,-1) {$D$};
    
\end{tikzpicture}

    \caption{A harmonic tuple: $C,D$ are harmonic conjugates with respect to $A,B$ (and 
    vice-versa).}
    \label{fig:HarmonicConjugates}
\end{figure}

Now, suppose the given vector space is also equipped with a symplectic form $\omega$. 
The projective space inherits a contact structure $\xi$, assigning to a line through the 
origin of $V$ its symplectic orthogonal complement (a hyperplane containing said line), 
that is:
\begin{defi}\label{def:Polarity}
    The {\em contact plane} through $q\in \mathbb{P}V$ is the plane
\[ \xi_q = q^\omega := \{ {\bf u}\in V : \omega({\bf u},{\bf q}) = 0\} \in \mathbb{P}V^*.\]
Likewise, given a hyperplane $\Pi \in \mathbb{P}V^*$, its \emph{contact point} is the 
unique point 
$q_\Pi = \Pi^\omega  \in \Pi$ satisfying \[\Pi= \xi_{q_\Pi}.\]
A {\em Legendrian line} in $(\mathbb{P}V,\xi)$ is a line contained in some $\xi_q$ and 
passing through $q$.
\end{defi}

Throughout the paper, we will often work with a specific situation: 
\begin{defi}\label{def:CharLine}
    If $\Sigma \subset \mathbb{P}V$ is a regular hypersurface and $q\in \Sigma$, we write 
$q' = (T_q\Sigma)^\omega$ for the contact point of the tangent hyperplane $T_q\Sigma$. When $q'\ne q$,
the \emph{characteristic line} at $q\in\Sigma$ is the line joining $q$ and $q'$. 
\end{defi}

In computations, we may often use the following observation (immediate from the 
definitions)
\begin{prop}\label{prop:SGrad}
    For a hypersurface given (locally) as $\Sigma = f^{-1}(0)\subset \mathbb{P}V$ for 
    $f:V\dashrightarrow \R$ some homogeneous function, then $q' = \text{span}\{ X_f(q)\}$ 
    for $X_f$ the symplectic gradient of $f$: $\omega(\cdot , X_f) = df(\cdot)$.
\end{prop}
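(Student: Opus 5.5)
The plan is to unwind the definitions at the level of lifts in $V$ and reduce the claim to a one-line linear-algebra identity. Let $\boldsymbol{q}\in q\setminus\{0\}$ be a lift with $f(\boldsymbol{q})=0$ and $df_{\boldsymbol q}\neq 0$. Since $f$ is homogeneous, $\Sigma$ is well defined in $\mathbb{P}V$, and the cone $\hat\Sigma=f^{-1}(0)\subset V$ is its preimage. The tangent hyperplane $T_q\Sigma\in\mathbb{P}V^*$ is then the projectivization of the linear hyperplane $T_{\boldsymbol q}\hat\Sigma=\ker df_{\boldsymbol q}\subset V$. This hyperplane is independent of the lift, because homogeneity of degree $k$ gives $df_{t\boldsymbol q}=t^{k-1}df_{\boldsymbol q}$, so the kernels agree.

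We then need the contact point of this hyperplane. By Definition~\ref{def:Polarity}, $q'=(T_q\Sigma)^\omega$ is the unique line $\ell$ with $\ell^\omega=\ker df_{\boldsymbol q}$. Take $X_f(\boldsymbol q)$ defined by $\omega(\cdot,X_f(\boldsymbol q))=df_{\boldsymbol q}(\cdot)$, which is nonzero by nondegeneracy of $\omega$ since $df_{\boldsymbol q}\neq0$. Then
\[
\operatorname{span}\{X_f(\boldsymbol q)\}^\omega=\{\boldsymbol u:\omega(\boldsymbol u,X_f(\boldsymbol q))=0\}=\ker df_{\boldsymbol q}.
\]
Since $\omega$ is nondegenerate, the map $\ell\mapsto\ell^\omega$ is a bijection from lines to hyperplanes, so $q'=\operatorname{span}\{X_f(\boldsymbol q)\}$. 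Rescaling the lift only rescales $X_f$, again by homogeneity, so the answer is independent of the lift.

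Two further checks are needed. First, $q'$ really lies in $T_q\Sigma$: indeed $\omega(X_f,X_f)=0$, which is consistent with the requirement in Definition~\ref{def:Polarity} that the contact point lie in its plane. Second, $q\in T_q\Sigma$: by Euler's identity, $df_{\boldsymbol q}(\boldsymbol q)=kf(\boldsymbol q)=0$. No step is a real obstacle. The only point needing care is the identification of the projective tangent hyperplane with $\ker df_{\boldsymbol q}$ for homogeneous $f$, together with the bookkeeping of the argument order in $\omega(\cdot,X_f)$, which only affects the sign of $X_f$ and not the line it spans.
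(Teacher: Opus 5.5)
Your proof is correct and is exactly the unwinding of definitions the paper has in mind; the paper gives no separate proof and calls the proposition immediate from the definitions. In both, the projective tangent hyperplane is $\ker df_{\boldsymbol q}$, the relation $\omega(\cdot,X_f)=df$ gives $\text{span}\{X_f(\boldsymbol q)\}^\omega=\ker df_{\boldsymbol q}$, and nondegeneracy of $\omega$ then identifies this line as the contact point $q'$.
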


We are now ready to introduce the outer contact billiard correspondence (see Figure 
\ref{fig:contactReduction}). \\

\begin{defi}\label{def:OutContBill}
    Let $\Sigma\subset (\mathbb{P}V,\xi)$ be a hypersurface (the {\em table}). A 
    pair of points $Z,Z'\in \mathbb{P}V$ are in {\em outer contact billiards
    correspondence} over $\Sigma$ if there exist $q\in \Sigma$ such that
    \begin{enumerate}
        \item The line $Z\wedge Z'$ is the characteristic line at $q$, and 
        \item $Z,Z'$ are harmonic conjugates with respect to $q, q'$.
    \end{enumerate}
\end{defi}

Since outer contact billiards are, in general, a correspondence rather than a map, we will call broadly:
\begin{defi}\label{def:periodic}
    An {\em orbit} is a sequence $(Z_j)_{j\in \Z}$ such that each consecutive pair is 
    in correspondence. An orbit is {\em periodic} if there exists a positive integer 
    $k$ such that $Z_{j+k}=Z_j$ for all $j$. The smallest such $k$ is called the 
    {\em period} of the orbit.
\end{defi}
For instance, we will not worry here about when there is some orientation convention which could direct orbits in some standard way, and, unless mentioned otherwise, will be interested in non-collinear orbits.

\begin{rmk}
    A point $q\in \Sigma$ satisfying $T_q\Sigma = \xi_q$ is called a \emph{singular 
    point}. At such a point, the contact point of the tangent hyperplane coincides with 
    $q$ itself, that is, $q=q'$. Consequently, the characteristic line is no longer 
    determined, and a further analysis (blowup) of the contact billiards relation around such 
    points is necessary.
    
    Since contact structures are maximally non-integrable, one expects the singular set 
    to be small; indeed, for a generic hypersurface, the singular points are isolated, 
    and the set of singular points always has an empty interior. On the other hand, 
    topological obstructions may force their existence. For instance, if $\Sigma$ is a 
    2-sphere, then by the hairy ball theorem, the characteristic line field cannot be 
    everywhere non-vanishing, and therefore $\Sigma$ necessarily contains singular 
    points.
\end{rmk}

\begin{rmk}
    The contact structure $(\mathbb{P}V,\xi)$ is the model example of a contact 
    reduction of $(V\backslash 0,\omega)$ with respect to the radial scaling symmetry 
    \cite{ContRed}. Our definition of outer contact billiards is, in this sense, a scaling 
    reduction of symplectic billiards along the cone spanned by $\Sigma$ in $V$. See 
    Figure \ref{fig:contactReduction}.
\end{rmk}
\begin{rmk}
    Outer contact billiards depend only on the conformal class of the linear symplectic
    structure $(V,\omega)$.
\end{rmk}

\begin{figure}[h]
    \centering
    \includegraphics[width=0.45\linewidth]{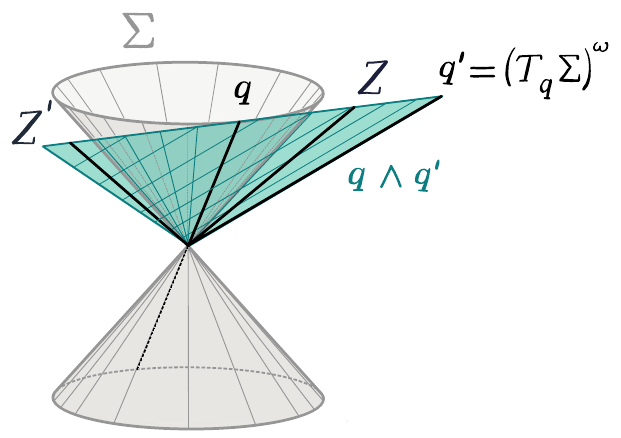}
    \caption{The outer contact billiard correspondence is obtained by projectivizing the 
    outer symplectic billiard correspondence on the cone over the table. The contact point 
    $q' = (T_q\Sigma)^\omega \in \mathbb{P}V$, and $q\in \Sigma$ generate the 
    characteristic line $q\wedge q'$, along with the involution of $q\wedge q'$ by 
    taking harmonic conjugates, $Z\longleftrightarrow Z'$, with respect to $q, q'$ (as 
    shown in Figure \ref{fig:HarmonicConjugates}).}
    \label{fig:contactReduction}
\end{figure}

\begin{rmk}
    We also propose (\S \ref{sec:questions} below) an analogous definition for an \emph{inner contact billiards} in $\mathbb{RP}^3$, whose study we will leave for future work.
\end{rmk}

We often consider the linear symplectic structure on $V$ as an identification
\[ \omega:V \to V^*, ~~\boldsymbol{v}\mapsto \iota_{\boldsymbol{ v}}\omega \]
or projectively as a {\em polarity} on $\mathbb{P}V$, identifying (see Definition~\ref{def:Polarity})
\[ \mathbb{P}V \to \mathbb{P}V^*, ~~q\mapsto \xi_q. \]
Through this identification any hypersurface $\Sigma\subset\mathbb{P}V$ has a corresponding {\em polar surface}, $\Sigma'\subset \mathbb{P}V$,  upon identifying its dual $\{ \text{Ann}( T_q\Sigma ): q\in\Sigma\} \subset\mathbb{P}V^*$ with a locus in $\mathbb{P}V$ through the above polarity. In summary:
\begin{defi}\label{def:PolSurf}
    The {\em polar surface}, $\Sigma'\subset\mathbb{P}V$, with respect to $\omega$ of a hypersurface $\Sigma\subset \mathbb{P}V$ is the locus of contact points of $\Sigma$:
    \[ \Sigma' := \{ (T_q\Sigma)^\omega: q\in \Sigma\} \subset \mathbb{P}V. \]
\end{defi}

The polar surface of a table $\Sigma\subset\mathbb{P}V$ plays a key role in its induced 
outer contact billiards.


\section{Examples}\label{sec:Exs}

Before presenting some general results, \S \ref{sec:GenThms}, we examine several examples 
in the first nontrivial case, when $\dim V = 4$. So, we will denote here $(\R^4,\omega)$ 
as some 4-dimensional linear symplectic vector space with contact reduction
$(\mathbb{RP}^3,\xi)$ and will focus our attention on quadratic tables
\[ \Sigma=\{Q=0\}\subset\mathbb{RP}^3,\]
where $Q:\R^4\to\R$ is some indefinite quadratic form. For such tables, their outer contact
billiard admits an explicit description: the domain of the correspondence is filled by 
invariant quadrics, with an induced dynamics on each leaf determined by iterating a fixed 
linear transformation (see Propositions \ref{prop:QuadRP3}, \ref{prop:QuadR4} below). We 
first present those examples which we noticed to have some distinguished properties. The 
remaining types of quadratic surfaces and their contact dynamics are given explicitly in 
Section~\ref{sec:QuadExs} from Appendix \ref{sec:Quads} below. \\



Unless mentioned otherwise, we work in appropriate symplectic coordinates
\begin{equation}
\label{eq:StdSymp}
    \R^4\ni(x,y,u,v),\qquad \omega=dx\wedge dy+du\wedge dv,
\end{equation}
and, whenever convenient, identify 
\begin{equation}\label{eq:CplxCoord}
    \R^4\ni (x,y,u,v)\longleftrightarrow(x+iy,u+iv)=(z,w)\in \C^2.
\end{equation}

\subsection{Clifford tori}\label{sec:ClTori}

We consider first a table of the form
\[ \Sigma_c = \{ |z|^2 = c|w|^2 \} \]
in the complex coordinates \eqref{eq:CplxCoord}, for $c>0$ some
constant. This table projects not only under real scaling to a torus in $\mathbb{RP}^3$, but also under complex scaling (the Hopf map) to a circle in $\mathbb{CP}^1$.

First, we determine the polar surface of $\Sigma_c$. From Proposition \ref{prop:SGrad}, we compute the symplectic gradient: the contact point of $q = \text{span}\{ (z,w)\}\in\Sigma_c$ is $q' = \text{span}\{ (iz,-icw)\}\in\Sigma'$ so that
\[ \Sigma' = \{ c|z|^2 = |w|^2\}.\]




To visualize the correspondence, we pass to the double cover 
$S^3 \stackrel{2:1}{\longrightarrow } \mathbb{RP}^3 = S^3/\{\pm 1\}$. The 
intersection $\Sigma_c\cap S^3$ is the Clifford torus
\[\boldsymbol{\Sigma} = \{(c_1e^{i\theta}, c_2e^{i\varphi}) \in \C^2 : \theta, \varphi \in S^1\} \subset S^3,\]
where the constants $c_1, c_2$ such that $c= (c_1/c_2)^2$ are normalized to satisfy 
$c_1^2+c_2^2=1$. The projections of $\boldsymbol{\Sigma}$ on each complex coordinate are 
circles of radii $c_1$ and $c_2$, respectively. Likewise, 

\[\boldsymbol{\Sigma}'=\{(c_2e^{i\theta}, c_1e^{i\varphi}) \in \C^2 : \theta, \varphi \in S^1\} \subset S^3.\]

Two points $\boldsymbol{Z}, \boldsymbol{Z}' \in S^3$ are in outer contact correspondence (see Definition \ref{def:OutContBill}) if there exist $\boldsymbol{q} \in \boldsymbol{\Sigma}$ with corresponding $\boldsymbol{q}'\in \boldsymbol{\Sigma}'$ and $t\in \R$ such 
that
\begin{equation}\label{eq:CliffCorr}
    \boldsymbol{Z} = \cos(t)\boldsymbol{q} + \sin(t)\boldsymbol{q}', \qquad
\boldsymbol{Z'} = \cos(t)\boldsymbol{q} - \sin(t)\boldsymbol{q}'.
\end{equation}
Projecting onto each complex coordinate, the characteristic geodesic becomes a tangent 
ellipse to the projections of $\boldsymbol{\Sigma}$ and $\boldsymbol{\Sigma'}$, as shown 
in Figure \ref{fig:Clifford}.

\begin{figure}
    \centering
    \includegraphics[width=0.9\linewidth]{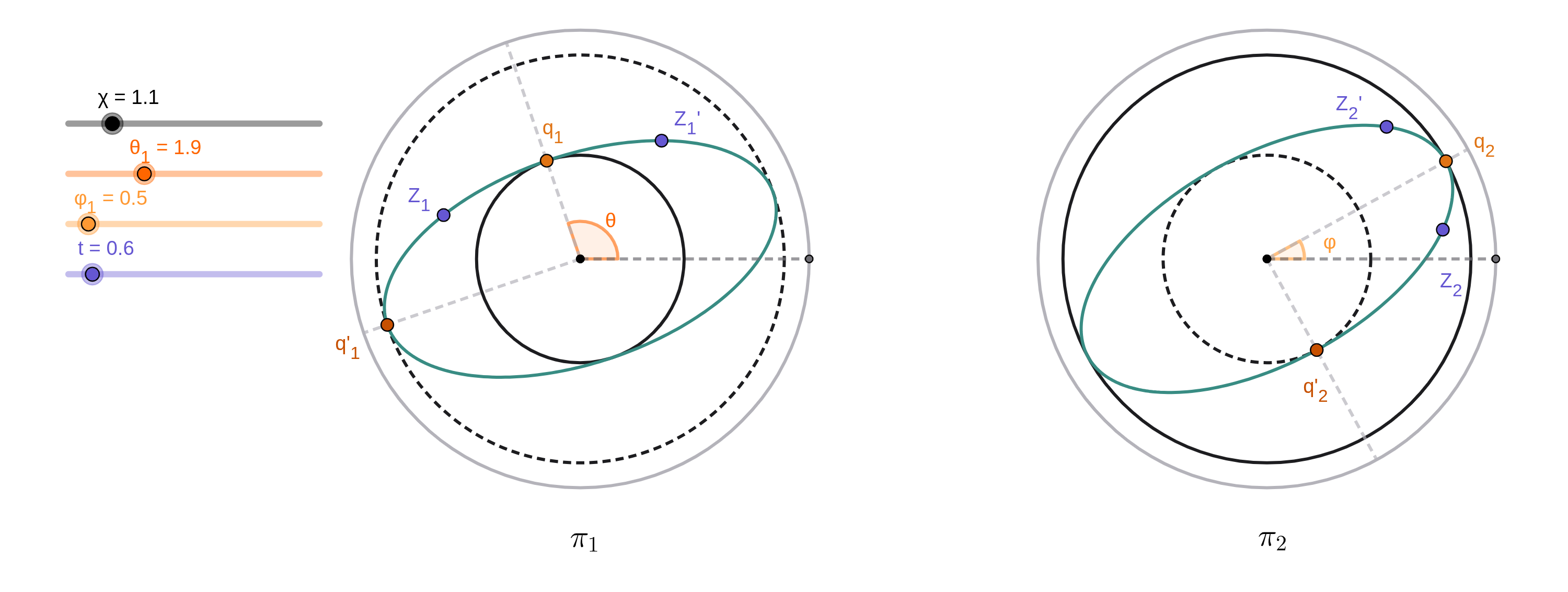}
    \caption{Outer contact billiard on a Clifford torus. The left and right images show 
    the projections onto the first and second complex coordinates, respectively.    
    The points $\boldsymbol{Z}=(Z_1, Z_2)$ and $\boldsymbol{Z'}=(Z_1', Z_2')$ are in 
    outer contact billiard correspondence (note: the norms $|Z_1|,|Z_2|$ are not independent but rather related by $|Z_2|^2 = 1-|Z_1|^2$). An interactive version of this figure is 
    available at \href{https://www.geogebra.org/calculator/g5pzdhyh}{GeoGebra}.}
    \label{fig:Clifford}
\end{figure}


\begin{rmk}
When $c_1=c_2$, every characteristic line is contained in $\Sigma_1$, i.e., $\Sigma_1$ is `self-dual'. Consequently, the 
domain of the correspondence reduces to $\Sigma_1$, a torus ruled by characteristic lines, and any pair of points lying on a common ruling are in outer 
contact billiard correspondence.    
\end{rmk}

For $c_1 \neq c_2$, the correspondence has the following properties:
\begin{enumerate}
    \item The correspondence is defined on the region bounded by $\Sigma_c$ and $\Sigma'$.
    \item Two points $\boldsymbol{Z}, \boldsymbol{Z'} \in S^3$ are in outer contact 
    correspondence if and only if they lie on the same characteristic geodesic and are 
    equidistant from the tangency point. This mirrors the definition of outer symplectic 
    billiards described in \cite{tabachnikov1995dual}. As we shall prove in 
    Section~\ref{sec:Hopf}, this equivalence characterizes Hopf tables.
    \item Every point of the domain is in correspondence with exactly two points. The next 
    point in the orbit is given by the rotations on each complex coordinate through angles 
    $\alpha,\beta$, where
    \begin{equation}\label{eq:CTorAngles}
        \cos\alpha = \frac{c^2\cos^2 t - \sin^2 t}{c^2 \cos^2t +  \sin^2 t}, \qquad \cos\beta = \frac{\cos^2 t - c^2\sin^2 t}{ \cos^2t + c^2 \sin^2 t}
    \end{equation} 
    which is just the condition that $\boldsymbol{q}\in \boldsymbol{\Sigma}$ in \eqref{eq:CliffCorr}.

    \item Thus, every orbit is contained in a Clifford torus, and so the domain of the 
    correspondence is foliated by invariant Clifford tori.
    \end{enumerate}

The explicit description of the dynamics allows us to determine when a Clifford torus admits periodic orbits. This already illustrates a difference compared to the symplectic counterpart, where non-degenerate odd-periodic orbits are guaranteed to exist if the table is a closed submanifold (Theorem 2 in \cite{albers2024outer}).

\begin{prop}
For almost all values of $c$, the outer contact billiard correspondence with respect to $\Sigma_c$ admits no periodic orbits.
\end{prop}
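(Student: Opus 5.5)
The plan is to use the explicit description of the dynamics given above: each orbit lies on an invariant Clifford torus labelled by the parameter $t$, and along it the correspondence acts by rotating each complex coordinate by the fixed angles $\alpha,\beta$ of \eqref{eq:CTorAngles}. Periodicity then becomes a pair of rationality conditions on $\alpha,\beta$, and I will show these can hold only for countably many $c$.

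\textbf{Step 1 (reduce to forward orbits).} Every point of the domain is in correspondence with exactly two points. These are obtained by applying the rotation $R_{\alpha,\beta}:(z,w)\mapsto(e^{i\alpha}z,e^{i\beta}w)$ or its inverse, and the same $t$, hence the same $\alpha,\beta$, is kept along the orbit. An orbit that switches direction at some step has $Z_{j+1}=Z_{j-1}$. I would treat such back-and-forth orbits as the degenerate, collinear ones excluded after Definition~\ref{def:periodic}, though I have not checked that they are literally collinear. Up to reversing indices, a non-degenerate orbit is then $Z_j=R_{\alpha,\beta}^j Z_0$. Working in the double cover $S^3$, it is $k$-periodic in $\mathbb{RP}^3$ iff $R^k_{\alpha,\beta}\boldsymbol Z_0=\pm\boldsymbol Z_0$. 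Since $c\neq1$, both coordinates of points in the domain are nonzero, so this forces $k\alpha,k\beta\in\pi\Z$. In particular $\alpha/\pi$ and $\beta/\pi$ must both be rational.

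\textbf{Step 2 (eliminate $t$).} Put $T=\tan^2 t$, $a=\cos\alpha$, $b=\cos\beta$. From \eqref{eq:CTorAngles},
\[ \frac{T}{c^2}=\frac{1-a}{1+a}=\tan^2\frac{\alpha}{2},\qquad c^2T=\frac{1-b}{1+b}=\tan^2\frac{\beta}{2}. \]
Dividing gives
\[ c^4=\frac{\tan^2(\beta/2)}{\tan^2(\alpha/2)}, \qquad\text{so}\qquad c=\left|\frac{\tan(\beta/2)}{\tan(\alpha/2)}\right|^{1/2}. \]
The degenerate cases $a=\pm1$ or $b=\pm1$ correspond to $t$ at the boundary of the domain or give $c\in\{0,\infty\}$, so they can be discarded.

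\textbf{Step 3 (countability).} For each pair $(\alpha,\beta)\in\pi\Q\times\pi\Q$ there is at most one $c>0$ admitting a $t$ that realizes it. Hence the set of $c$ for which some orbit is periodic lies in the countable set
\[ \bigl\{\,|\tan(\pi s/2)/\tan(\pi r/2)|^{1/2}: r,s\in\Q\,\bigr\}, \]
which has measure zero. This proves the statement, and in fact gives the stronger conclusion that the exceptional set of $c$ is countable.

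The main obstacle I expect is Step 1: making rigorous that every genuinely non-degenerate periodic orbit of the correspondence is an orbit of the single map $R_{\alpha,\beta}$. This needs two facts. First, $t$, and hence the torus, is constant along orbits, which is the invariance in item 4 above. Second, the two correspondents of $Z$ are exactly $R_{\alpha,\beta}^{\pm1}Z$. I would verify the second fact directly from \eqref{eq:CliffCorr}: it expresses $Z'$ as the reflection of $Z$ across $\boldsymbol q$ along the great circle through $\boldsymbol q$ and $\boldsymbol q'$. Applied coordinatewise, this reflection is a rotation by the angle between $Z$ and $Z'$ in each factor. It remains to check that this angle is the same for every base point $\boldsymbol q$ on the torus, which follows from the $T^2$-symmetry of the table.
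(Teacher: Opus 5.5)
Your argument is essentially the paper's: you eliminate $t$ from \eqref{eq:CTorAngles} to tie $c$ to $(\alpha,\beta)$, observe that periodicity forces $\alpha/\pi,\beta/\pi\in\Q$, and conclude that only countably many $c$ occur. Your $c^4$ in place of the paper's $c^2$ comes from an inconsistency between \eqref{eq:CTorAngles} as printed and the normalization $c=(c_1/c_2)^2$, and it does not affect countability.

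One correction to Step 1. Back-and-forth sequences such as $Z_0, R_{\alpha,\beta}Z_0, R_{\alpha,\beta}^2Z_0, R_{\alpha,\beta}Z_0, Z_0,\dots$ do satisfy Definition~\ref{def:periodic}, and they are generally non-collinear, so you cannot discard them on that ground. Instead, read the statement, as the paper tacitly does, as concerning orbits of the map $R_{\alpha,\beta}$.
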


\begin{proof}
Indeed, eliminating $t$ from eqs.~\eqref{eq:CTorAngles}, periodic orbits on a Clifford torus  
correspond to values $c = (c_1/c_2)^2$ and $\alpha = q_1/p_1, \; \beta =q_2/p_2\in \Q$ satisfying
    \[ c^2 = \left( \frac{1 + \cos\alpha}{1-\cos\alpha}\right)\left( \frac{1 - \cos\beta}{1+\cos\beta}\right), 
    ~~\text{or}~~,~ 
    c = \left|\dfrac{\tan\frac{\beta}{2}}{\tan \frac{\alpha}{2}}\right|.\]
As we let $q_j/p_j\in \Q$ range over the rationals, we have a countable 
number of parameter values, $c$, for Clifford tori $\Sigma_c$ that admit periodic contact billiard orbits, while the remaining 
admit only quasi-periodic orbits.
\end{proof}

\begin{rmk}
    The relation 
    \begin{equation}
    \label{eq:KeplerProblem}
        \dfrac{\tan^2 \frac{\beta}{2}}{\tan^2 \frac{\alpha}{2}} = c^2
    \end{equation} 
    between the admissible angles $\alpha,\beta$ on a given Clifford torus with parameter 
    $c = cst.$, is the same as the relation between the true and eccentric anomalies used 
    to parametrize elliptic orbits of the Kepler problem. Geometrically, take a circle and 
    ellipse (of eccentricity $e = \left|\frac{c^2-1}{c^2+1}\right|$) sharing a common 
    diameter. Then, for $c^2>1$, the admissible angles are related via the figure:
    \begin{center}
        \begin{tikzpicture}[scale=.7]
    
    \draw (-3,0) -- (3,0); 

    \draw[thick] (3,0) arc (0:180:3 and 2); 
    \draw[thick] (3,0) arc (0:180:3 and 3); 

    \filldraw[black] (0,0) circle (1pt); 
    \filldraw[black] (2.24,0) circle (1pt); 

    \filldraw[black] (1.5,1.73) circle (2pt); 
    \filldraw[black] (1.5,2.6) circle (2pt); 

    \draw (1.5,0) -- (1.5,2.6);
    \draw (2.24,0) -- (1.5,1.73);
    \draw (0,0) -- (1.5,2.6);


    \node at (2.5,.3) {$\beta$};
    \node at (.4,.2) {$\alpha$};
    
    \node at (2.24,-.3) {focus};
    \node at (0,-.3) {center};
    
\end{tikzpicture}
    \end{center}
whereas for $c^2\in (0,1)$ the roles of $\alpha,\beta$ are interchanged. 
\end{rmk}

It is natural to ask for the smallest possible period. As we will see in 
Section~\ref{sec:periodic}, this billiard correspondence does not admit 3-periodic orbits 
when $\dim V=4$ (Proposition \ref{prop:No3RP3}). However, the Clifford torus determined by 
the quadratic form
\[ \frac{x^2+y^2}{\sqrt{2}-1} - \frac{u^2+v^2}{\sqrt{2}+1} = 0,\]
admits 4-periodic orbits. Figure \ref{fig:4periodic} shows the periodic orbit with 
vertices:
\[ (1:1:1:1), ~ (0:-1:0:1), ~(-1:1:-1:1), ~ (1:0:-1:0).  \]
Not only that, this is the only table defined by a quadratic form admitting $4$-periodic 
orbits (see Proposition \ref{prop:4PerQuad} below).

\begin{center}
    \includegraphics[width=0.9\linewidth]{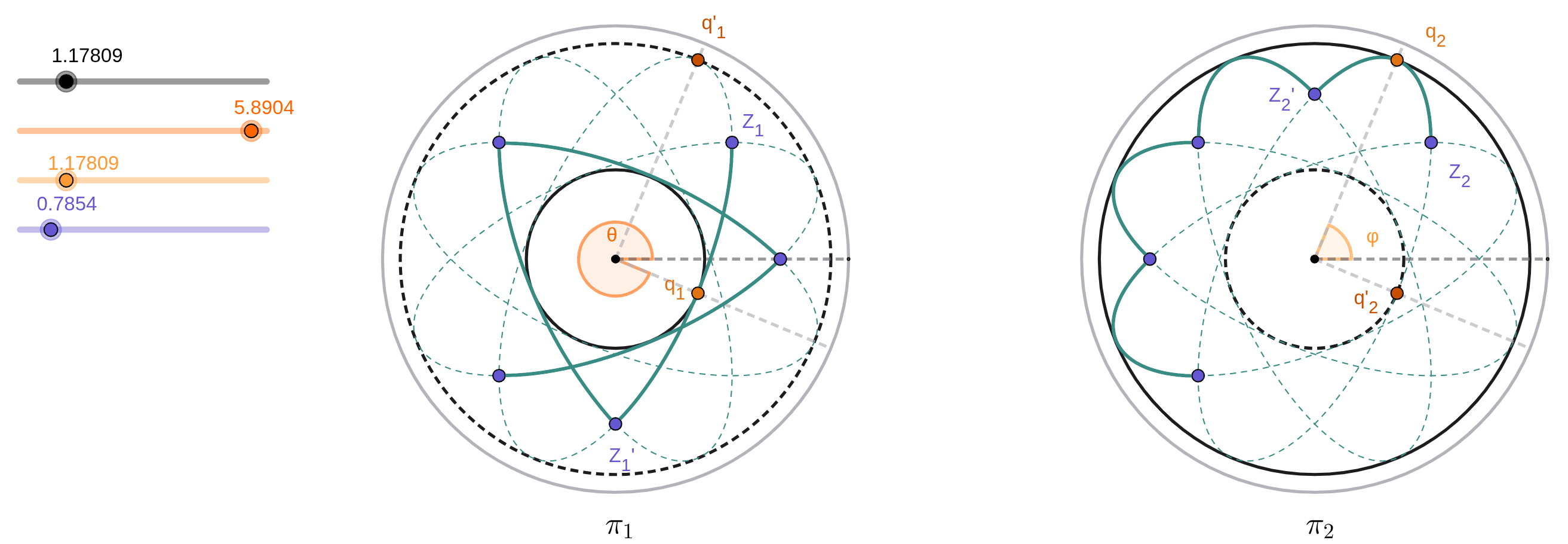}
    \figcaption{A 4-periodic orbit on the Clifford torus. Here, 
    $\alpha=-3\pi/4, \beta = \pi/4$. After $4$ iterations, the initial point $(Z_1,Z_2)$ 
    is its antipodal $(-Z_1, -Z_2)$.}
    \label{fig:4periodic}
\end{center}

\begin{rmk}
The projections of the characteristic lines onto each complex coordinate coincide with the 
trajectories of mechanical billiards (see e.g.~\cite{zhao2024mechanical}) along Hooke orbits (centered conics) bouncing off of certain centered circles. 
\end{rmk}

Summarizing, we have the following explicit description of the correspondence, as a special case of Proposition \ref{prop:QuadR4}.

\begin{prop}
\label{prop:ClTori}
The domain of the outer contact billiard map over $\Sigma_c = \{ x^2 + y^2 = c(u^2 + v^2)
\}\subset\mathbb{RP}^3$, with $c>0, c\ne 1$, is the region foliated by the Clifford tori:
\[ I_{\sigma^2} = \left\{ \sigma^2 = \frac{x^2+ y^2 - c(u^2 - v^2)}{c( u^2 + v^2 - c(x^2 + y^2))}   \right\} \subset\mathbb{RP}^3 \]
for $\sigma \in (0,\infty)$. Each of these tori is invariant under the outer contact 
billiard map over $\Sigma_c$. The map is given, on $I_{\sigma^2}$ by:
\[ I_{\sigma^2}\ni (x:y:u:v)\mapsto (x':y':u':v') \in I_{\sigma^2} \]
where
\[ \begin{pmatrix}
        x' \\ y' \\ u' \\ v'
    \end{pmatrix} = 
    \begin{pmatrix}
        \cos \alpha & -\sin \alpha & 0 & 0 \\
        \sin \alpha & \cos \alpha & 0 & 0 \\
        0 & 0 & \cos \beta & -\sin \beta \\
        0 & 0 & \sin \beta & \cos \beta
    \end{pmatrix}
    \begin{pmatrix}
        x \\ y \\ u \\ v
    \end{pmatrix}.\]
The angles $\alpha, \beta$ are constant on $I_{\sigma^2}$, determined by 
$\sigma = \sqrt{\sigma^2}> 0$ through:
\[ \frac{(1-\sigma^2, 2\sigma)}{1+\sigma^2} = (\cos\alpha, \sin\alpha), ~~\frac{(1-c^2\sigma^2, -2c\sigma)}{1 + c^2\sigma^2} = (\cos\beta, \sin\beta)\]
and satisfying $c^2 = \frac{\tan^2\frac{\beta}{2}}{\tan^2\frac{\alpha}{2}}$.
\end{prop}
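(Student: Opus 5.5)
The plan is a direct computation in the complex coordinates \eqref{eq:CplxCoord}, using the symplectic gradient from Proposition~\ref{prop:SGrad}. Take $q=\mathrm{span}\{(z,w)\}\in\Sigma_c$, so $|z|^2=c|w|^2$. As computed in \S\ref{sec:ClTori}, its contact point is $q'=\mathrm{span}\{(iz,-icw)\}$. By Definition~\ref{def:OutContBill}, a corresponding pair consists of harmonic conjugates with respect to $q,q'$. So we may take lifts
\[ \boldsymbol Z = \boldsymbol q + s\,\boldsymbol q' = \bigl(z(1+is),\, w(1-ics)\bigr),\qquad \boldsymbol Z' = \boldsymbol q - s\,\boldsymbol q' = \bigl(z(1-is),\, w(1+ics)\bigr), \]
for some real parameter $s$ (this is the $s=\tan t$ of \eqref{eq:CliffCorr}). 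The first key step is to read off directly that
\[ \boldsymbol Z'=\Bigl(\tfrac{1-is}{1+is}\,Z_1,\ \tfrac{1+ics}{1-ics}\,Z_2\Bigr). \]
Hence $Z'$ is obtained from $Z$ by rotating the two complex coordinates through unit complex numbers $e^{i\alpha},e^{i\beta}$. Expanding these quotients gives exactly the stated formulas for $(\cos\alpha,\sin\alpha)$ and $(\cos\beta,\sin\beta)$, with $\sigma=\mp s$ depending on which of the two correspondents we call the image. The relation $c^2=\tan^2\frac\beta2/\tan^2\frac\alpha2$ then follows from the half-angle identities $\tan\frac\alpha2=\pm\sigma$ and $\tan\frac\beta2=\pm c\sigma$.

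The second step is to show that $s^2$ is a function of $Z$ alone, which gives the invariant. From the lifts, $|Z_1|^2=|z|^2(1+s^2)$ and $|Z_2|^2=|w|^2(1+c^2s^2)$. Using $|z|^2=c|w|^2$ one computes
\[ |Z_1|^2-c|Z_2|^2 = c|w|^2(1-c^2)s^2,\qquad |Z_2|^2-c|Z_1|^2=|w|^2(1-c^2). \]
Dividing, and using $c\neq1$, gives
\[ s^2=\frac{|Z_1|^2-c|Z_2|^2}{c\,(|Z_2|^2-c|Z_1|^2)}, \]
which is homogeneous of degree $0$ and so well defined on $\mathbb{RP}^3$. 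This is the function defining $I_{\sigma^2}$; I would read the $u^2-v^2$ in the numerator of the statement as the typo $u^2+v^2$. The map $Z\mapsto Z'$ is a unitary diagonal rotation, so it preserves $|Z_1|^2$ and $|Z_2|^2$ and therefore each level set $I_{\sigma^2}$. Moreover, $\alpha$ and $\beta$ depend only on $\sigma$, so they are constant on each leaf. Each leaf is itself a Clifford torus $|Z_1|^2=k|Z_2|^2$, since the level-set equation is linear in $|Z_1|^2,|Z_2|^2$.

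The step needing most care is the converse: that the domain is exactly $\{\sigma^2\in(0,\infty)\}$, and that each such $Z$ has precisely the two correspondents described. I would prove this by inverting the parametrization. Given $Z$ with the displayed ratio positive, set $s=\pm\sigma$ and define $z=Z_1/(1+is)$, $w=Z_2/(1-ics)$. Then $|z|^2=|Z_1|^2/(1+\sigma^2)$ and $|w|^2=|Z_2|^2/(1+c^2\sigma^2)$, and substituting the value of $\sigma^2$ shows $|z|^2=c|w|^2$, so $q\in\Sigma_c$. This realizes $Z$ on the characteristic line of $q$, and any realization must be of this form by the first step. Points with ratio $\le 0$ or undefined admit no real $s$. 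These are the points outside the region between $\Sigma_c$ and $\Sigma'=\{c|z|^2=|w|^2\}$: the numerator vanishes on $\Sigma_c$ and the denominator on $\Sigma'$. Finally, $q'\neq q$ on $\Sigma_c$ when $c\neq1$, so the singular set is not a concern. Choosing $\sigma>0$ as the forward branch gives the stated map, and the other branch $-\sigma$ gives its inverse.
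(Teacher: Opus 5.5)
Your proposal is correct and follows essentially the paper's route: the paper obtains this proposition as the special case of Proposition~\ref{prop:QuadR4}. There one writes $\boldsymbol Z=(id+\sigma X)\boldsymbol q$ and $\boldsymbol Z'=(id-\sigma X)\boldsymbol q$, where here $X$ acts as $(z,w)\mapsto(iz,-icw)$, so the Cayley transform $(id-\sigma X)(id+\sigma X)^{-1}$ is exactly your diagonal unitary rotation, and one then reads off the invariant pencil through $\Sigma_c$ and $\Sigma'$. Your direct evaluation of $|Z_1|^2$ and $|Z_2|^2$ stands in for the general identity $\omega(X^3\boldsymbol q,\boldsymbol q)=\det X\,\omega(\boldsymbol q,X^{-1}\boldsymbol q)$ and has the advantage of pinning down the sign: on the domain the ratio $(S\boldsymbol Z,\boldsymbol Z)/(S'\boldsymbol Z,\boldsymbol Z)$ equals $-c^2\sigma^2=-\sigma^2\det X$. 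You are also right that the numerator should read $x^2+y^2-c(u^2+v^2)$.
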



\subsection{Ellipsoids}\label{sec:Sphere}

Consider the table 
\[ E_a = \{ a(x^2 + y^2) + u^2 = v^2 \}, ~~a > 0,\]
which we see as an ellipsoid in the affine chart $\{ v = 1 \}$. In particular, 
this example does have some singular points which we are curious to examine. We find that 
the domain of the outer contact billiards map over $E_a$ is filled by invariant 
ellipsoids, on each of which the dynamics is determined by iterating a certain fixed linear 
map (acting as a product of a usual rotation and a hyperbolic rotation). Namely (from 
Proposition \ref{prop:QuadR4}) we have explicitly: 

\begin{prop}
\label{prop:ellipsoid}
The domain of the outer contact billiard map over the ellipsoid $E_a= \{ a(x^2 + y^2) + u^2 = v^2 \}\subset \mathbb{RP}^3$, with $a>0$, is the region filled by the quadratics:
\[ I_{\sigma^2} = \left\{ \sigma^2 = \frac{a( x^2 + y^2) + u^2 - v^2}{(x^2 + y^2)/a + v^2 - u^2} \right\} \subset\mathbb{RP}^3\]
for $\sigma\in (0,\infty)$. The singular tangent planes of $E_a$ constitute $I_{a^2}$. 
Each of these quadratic surfaces is invariant under the outer contact billiard map over 
$E_a$. When $\sigma^2\ne a^2$, the map is given on $I_{\sigma^2}$ by:
    \[ I_{\sigma^2}\ni (x:y:u:v)\mapsto (x':y':u':v') \in I_{\sigma^2} \]
    for
    \[ \begin{pmatrix}
    x' \\ y' \\ u' \\ v'
    \end{pmatrix} = \begin{pmatrix}
        \cos s & -\sin s & 0 & 0 \\
        \sin s & \cos s& 0 & 0 \\
        0 & 0 & \epsilon\cosh t & \epsilon\sinh t \\
        0 & 0 & \epsilon\sinh t & \epsilon\cosh t
    \end{pmatrix}\begin{pmatrix}
    x \\ y \\ u \\ v
    \end{pmatrix},\]
where $\epsilon = \text{sgn}\{ a^2 - \sigma^2 \}$, and the angles $s,t$ are constant on 
$I_{\sigma^2}$, determined through:
    \[ \frac{(1-\sigma^2, 2\sigma)}{1+\sigma^2} = (\cos s, \sin s), ~~\frac{(a^2 + \sigma^2, 2a\sigma)}{|a^2 - \sigma^2|} = (\cosh t, \sinh t). \]
This table admits no periodic orbits.
\end{prop}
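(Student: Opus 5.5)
We work directly in the coordinates \eqref{eq:StdSymp}, mirroring the Clifford torus computation. Write $Q = a(x^2+y^2)+u^2-v^2$. By Proposition \ref{prop:SGrad}, the contact point of $q=(x:y:u:v)\in E_a$ is the span of the symplectic gradient $X_Q$. Up to a factor of $2$ this is $q' = (-ay : ax : -v : -u)$. In the notation $z=x+iy$, $w=u+iv$ we have $q=(z,w)$ and $q'=(iaz, -\bar w i\cdot\ldots)$. Rather than use this notation, we will split $\R^4=\R^2_{xy}\oplus\R^2_{uv}$. On the first factor $q\mapsto q'$ acts as $a J$ with $J$ the rotation by $\pi/2$. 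On the second factor it acts as $-S$, where $S(u,v)=(v,u)$ is the hyperbolic reflection.

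\textbf{Step 1 (the invariant quadrics).} A pair in correspondence is $\boldsymbol Z=\boldsymbol q+\lambda\boldsymbol q'$, $\boldsymbol Z'=\boldsymbol q-\lambda\boldsymbol q'$ with $Q(\boldsymbol q)=0$. We introduce the dual form $Q^*=(x^2+y^2)/a+v^2-u^2$. We will verify that $Q(\boldsymbol q')= a\cdot aQ^*(\boldsymbol q)\cdot(\ldots)$, and more precisely that the mixed terms satisfy $B_Q(\boldsymbol q,\boldsymbol q')=B_{Q^*}(\boldsymbol q,\boldsymbol q')=0$. This holds because $\boldsymbol q'$ is $\omega$-dual to $dQ$, so $\omega(\boldsymbol q,\boldsymbol q')\propto dQ(\boldsymbol q)=2Q(\boldsymbol q)=0$, and the two bilinear forms are of the same type. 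Given this, $Q(\boldsymbol Z)=Q(\boldsymbol Z')$ and $Q^*(\boldsymbol Z)=Q^*(\boldsymbol Z')$, so the ratio $\sigma^2=Q/Q^*$ is invariant. Since $Q(\boldsymbol q)=0$, the value $\sigma^2$ is a function of $\lambda$ and of the ratio $Q(\boldsymbol q')/Q^*(\boldsymbol q)$. The image of $\lambda$ gives exactly the range $\sigma^2\in(0,\infty)$, which identifies the domain. The singular points of $E_a$ are those $q$ with $q'=q$, i.e. $q$ an eigenvector of $aJ\oplus(-S)$. Their contact planes are where the correspondence degenerates, and we will check directly that the union of these planes is $I_{a^2}$.

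\textbf{Step 2 (the map is linear on leaves).} We will not solve for $q$ pointwise. Instead we take $M=R_s\oplus \epsilon H_t$, which commutes with $J$ on the first factor and with the structure $S$ on the second. This $M$ preserves $Q$, $Q^*$, and $\omega$ up to sign. For $\boldsymbol Z$ on $I_{\sigma^2}$, we set $\boldsymbol q:=\boldsymbol Z+M\boldsymbol Z$ and $\boldsymbol q'':=\boldsymbol Z-M\boldsymbol Z$. It then suffices to show that $Q(\boldsymbol q)=0$ and that $\boldsymbol q''$ is a multiple of $\boldsymbol q'$. The first condition is $Q(\boldsymbol Z)+B_Q(\boldsymbol Z,M\boldsymbol Z)=0$. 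On each factor, $B(\boldsymbol Z,M\boldsymbol Z)$ equals $\cos s\,(x^2+y^2)$, respectively $\epsilon\cosh t\,(u^2-v^2)$. The condition therefore becomes one linear relation between $x^2+y^2$ and $u^2-v^2$. It coincides with the defining equation of $I_{\sigma^2}$ exactly under the stated rational parametrisations of $(\cos s,\sin s)$ and $(\cosh t,\sinh t)$. The second condition, proportionality $(I-M)\boldsymbol Z\propto (aJ\oplus -S)(I+M)\boldsymbol Z$, holds factorwise because $(I-R_s)=\tan(s/2)\,J(I+R_s)$, and similarly $(I-\epsilon H_t)$ is a multiple of $S(I+\epsilon H_t)$. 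Matching the two proportionality constants $a\tan(s/2)$ and $\tanh(t/2)$ (or $\coth$ when $\epsilon=-1$) fixes the stated relation between $s$, $t$, $\sigma$, $a$. This matching of the constants, together with the sign $\epsilon$, is the step we expect to require the most care. We will first check it with $\tan(s/2)=\sigma$ and $\tanh(t/2)^{\pm1}=a\sigma\cdot$const, treating the cases $\sigma<a$ and $\sigma>a$ separately. Harmonicity is automatic, since $\boldsymbol Z=\tfrac12(\boldsymbol q+\boldsymbol q'')$ and $M\boldsymbol Z=\tfrac12(\boldsymbol q-\boldsymbol q'')$. The reverse direction is the same computation with $M^{-1}$.

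\textbf{Step 3 (no periodic orbits).} A periodic orbit would require $M^k=\mu I$ projectively for some $k\geq 1$, at least on the orbit's span. Since $\sigma>0$ and $\sigma\ne a$, we have $t\neq 0$, so $H_t^k$ has eigenvalues $e^{\pm kt}$ with distinct moduli. Projective periodicity of a point $\boldsymbol Z$ with $(u,v)\neq 0$ then forces $(u,v)$ to be an eigenvector of $H_t$, i.e. $u=\pm v$, and also forces the $xy$-component to match the scaling $e^{\pm kt}$ while having unit modulus. This gives $x=y=0$ or a contradiction. Such points have $Q^*=0$ or $Q=0$ and lie outside the open leaves. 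The orbits therefore escape every bounded projective neighbourhood of a starting point and cannot close up.
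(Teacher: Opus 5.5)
Your outline is sound and close in substance to the paper's route. The paper gets this proposition from Proposition \ref{prop:QuadR4}. There one writes $Z=(I+\lambda X)q$, $Z'=(I-\lambda X)q$ with $X=aJ\oplus S$ the contact-point map. One then reads off $Z'=(I-\lambda X)(I+\lambda X)^{-1}Z$ and fixes $\lambda$ on each leaf through the invariant $Q/Q^*$. You instead guess $M$ and verify it. Your second condition $(I-M)=\mu X(I+M)$ is equivalent to $M=(I-\mu X)(I+\mu X)^{-1}$, so you are running the paper's Cayley-transform computation backwards. This is elementary and specific to $E_a$, but it needs extra work the derivation gives for free (see below). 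Your first condition is correct: $Q(Z)+B_Q(Z,MZ)=a(1+\cos s)(x^2+y^2)+(1+\epsilon\cosh t)(u^2-v^2)$. Since $1+\cos s=2/(1+\sigma^2)$ and $1+\epsilon\cosh t=2a^2/(a^2-\sigma^2)$, its vanishing is exactly the equation of $I_{\sigma^2}$.

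However, the step you defer is set up wrongly and would fail as written. The contact point is $q'=(-ay:ax:v:u)$, i.e.\ $X=aJ\oplus S$, not $aJ\oplus(-S)$. With your sign, $\omega(q,q')=a(x^2+y^2)-u^2+v^2=2a(x^2+y^2)\neq 0$ on $E_a$. This sign is exactly what the constant matching detects. One has $(I-R_s)=-\tan(s/2)\,J(I+R_s)$ and $(I-\epsilon H_t)=-\tanh(t/2)^{\epsilon}\,S(I+\epsilon H_t)$. With the correct $X$, the blockwise constants are therefore $-\tan(s/2)/a$ and $-\tanh(t/2)^{\epsilon}$, and matching requires $\tanh(t/2)^{\epsilon}=\tan(s/2)/a=\sigma/a$. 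The stated parametrisation satisfies this in both regimes: $\tanh(t/2)=\sigma/a$ for $\sigma<a$ and $\tanh(t/2)=a/\sigma$ for $\sigma>a$. With your $-S$ you would instead need $\tanh(t/2)^{\epsilon}=-\sigma/a$. You would then be verifying $R_s\oplus\epsilon H_{-t}$, which is not the correspondence over $E_a$. Your constant $a\tan(s/2)$ and your target ``$\tanh(t/2)^{\pm1}=a\sigma$'' also put $a$ on the wrong side.

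Smaller points:
\begin{itemize}
\item Verifying $M$ shows only that $M^{\pm1}Z$ are correspondents, not that they are the only ones. To close this, make Step 1 explicit. On $E_a$ one has $Q(q')=a^2Q^*(q)$, so $\sigma^2=a^2\lambda^2$ and $\lambda=\pm\sigma/a$. Since $I+\lambda X$ is invertible for $\sigma\neq a$, $q=(I+\lambda X)^{-1}Z$ is then determined.
\item The mixed terms vanish for a different reason than the one you give. It is not because $\omega(q,q')=0$. It is because $B_Q(q,Xq)=\tfrac12\, dQ_q(Xq)\propto\omega(Xq,Xq)=0$ identically, and likewise for $Q^*=Q\circ X^{-1}$.
\item In Step 3, the case $(u,v)=0$ lies in $I_{a^2}$ and so is excluded; say so.
\item The closing sentence about orbits escaping bounded neighbourhoods should be dropped. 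The eigenvalue argument already suffices.
\end{itemize}
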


This ellipsoid example exhibits exactly two singular points where $q_\pm=q_{\pm}'$, namely:
\[ q_\pm = (0: 0 : \pm 1 :  1), \qquad \xi_{q_\pm} = T_{q_\pm}E_a = \{u=\pm v\}.\]
The union of these singular tangent planes constituting the singular leaf 
$I_{a^2} = \xi_{q_+}\cup \xi_{q_-}$. Since, for points not on these singular planes, we 
have the billiard map explicitly, we can easily examine the limiting behavior induced on 
these singular planes. For this example, as we let a point limit to a position on one of 
these singular planes, we find well-defined limiting positions for its correspondents. 
Namely, tending to some point
\[ Z_+ = (x_+:y_+: 1:1)\in \xi_{q_+}\backslash \{ q_+\},\]
we find its `regularized' correspondents are:
\[ (0:0:1:1) = q_+\in \xi_{q_+}, \text{ and }\left( 2 \frac{x_+\cos \alpha_* + y_+ \sin\alpha_*}{(x_+^2 + y_+^2)\sin \alpha_*} : 2 \frac{-x_+\sin \alpha_* + y_+ \cos\alpha_*}{(x_+^2 + y_+^2)\sin \alpha_*} : -1:1\right)\in \xi_{q_-}, \]
where $\tan \alpha_* = \frac{2a}{1-a^2}$.
Likewise, a point $Z_- = (x_-:y_-:-1:1)\in \xi_{q_-}\backslash \{ q_-\}$ has correspondents 
\[(0:0:-1:1) = q_- \in \xi_{q_-}, \text{ and } \left( 2 \frac{x_-\cos \alpha_* - y_- \sin\alpha_*}{(x_-^2 + y_-^2)\sin \alpha_*} : 2 \frac{x_-\sin \alpha_* + y_- \cos\alpha_*}{(x_-^2 + y_-^2)\sin \alpha_*} : 1:1\right) \in \xi_{q_+}\]
whereas a point on the intersection of the two singular planes has limiting correspondents the two singular points: $q_\pm$.
We see then that one singular point acts as an attractor and the other as a repeller (which 
is also clear from the explicit description in Proposition \ref{prop:ellipsoid}). 
Otherwise, this limiting correspondence between the two singular planes is a rotation by 
the fixed angle $\alpha_*$ together with an inversion over a certain circle of radius 
$2/\sin\alpha_*$. We think that examining the behavior around singular points in general 
will be an interesting theme for future work (see Question 8 in Section~\ref{sec:questions} 
of open questions below).

\begin{figure}[h]
    \centering
    \includegraphics[width=0.45\linewidth]{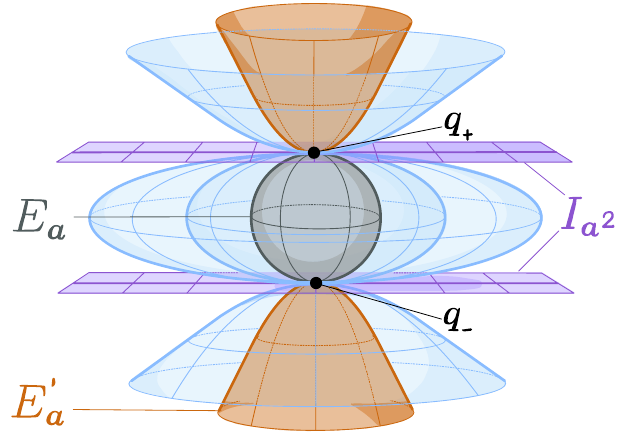}
    \caption{Singular foliation of invariant leaves on an affine chart. We show the pencil 
    of quadratics $I_{\sigma^2}$ and distinguish three special ones: the table, $E_a$, its 
    polar surface $E_a'$ and the union of the singular tangents $I_{a^2}$.}
    \label{fig:singular-foliation}
\end{figure}

\begin{rmk}
The pencil of quadratics $I_{\sigma^2}$ form a singular foliation. Namely, each member of 
this family of surfaces is tangent at the singular points $q_\pm$ (see Figure 
\ref{fig:singular-foliation}).  
\end{rmk}

\subsection{Sheared torus}\label{sec:TwistTori}

Consider the table given by the quadric \[ \Sigma = \{  v(v+x) = y(u-y) \},\]
which is topologically a torus. Here the table and its polar intersect along a distinguished common line, $\ell_*$. The outer billiard dynamics takes place along a family of invariant quadratic leaves, all intersecting along this same distinguished line. In this case, apart from collinear orbits contained in $\ell_*$, there are no `genuine' periodic orbits. Explicitly, the dynamics is given on each leaf by iterating a certain fixed linear map: 

\begin{prop}
\label{prop:TwistTori}
    The domain of the outer contact billiard map over the torus $\Sigma = \{  v(v+x) = y(u-y) \} \subset \mathbb{RP}^3$ is the region foliated by the quadrics
    \[I_{\sigma^2}=\left\{ \sigma^2 = \frac{y^2 + v^2 + xv-uy}{y^2 + v^2 - xv + uy} \right\}\subset \mathbb{RP}^3\]
    for $\sigma\in (0,\infty)$. Each of these quadratic surfaces is invariant under the outer contact billiard map over $\Sigma$. On each leaf, the map is given by 
    \[ I_{\sigma^2}\ni (x: y: u: v)\mapsto (x':y':u':v')\in I_{\sigma^2}\]
    as
     \[\begin{pmatrix}
        x'\\
        y'\\
        u'\\
        v'
    \end{pmatrix} = \begin{pmatrix}
        \cos s & \sin 2s & -\sin s & \cos 2s - 1\\
        0 & \cos s & 0 & -\sin s\\
        \sin s & 1 - \cos 2s & \cos s & \sin 2s\\
        0 & \sin s & 0 & \cos s
    \end{pmatrix} \begin{pmatrix}
        x \\
        y \\
        u \\
        v
    \end{pmatrix},\]
    where the angle $s$ is  determined via
    \[\dfrac{(1-\sigma^2, 2\sigma)}{1+\sigma^2} = (\cos s,\sin s)\] and is constant on $I_{\sigma^2}$.  
\end{prop}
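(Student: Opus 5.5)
The plan is to reduce everything to the two quadratic forms
\[ Q(x,y,u,v) = y^2 + v^2 + xv - uy, \qquad Q'(x,y,u,v) = y^2 + v^2 - xv + uy, \]
so that $\Sigma = \{Q=0\}$ and the leaves are $I_{\sigma^2}=\{Q = \sigma^2 Q'\}$.

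\textbf{Step 1: polar surface.} First compute the contact point. By Proposition \ref{prop:SGrad}, with $\omega = dx\wedge dy + du\wedge dv$, the condition $\omega(\cdot, X_Q)=dQ$ gives
\[ X_Q = (Q_y, -Q_x, Q_v, -Q_u) = (2y-u,\ -v,\ 2v+x,\ y). \]
So for $q\in\Sigma$ we have $q' = \text{span}\{X_Q(q)\}$. I will then check two things by direct substitution: that $Q'(q)=0$ exactly when $Q(q')=0$ (so that $Q'$ cuts out the polar surface $\Sigma'$), and that $\Sigma\cap\Sigma'$ contains the common line $\ell_* = \{y = v = 0\}$.

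\textbf{Step 2: invariant ratio.} Write the characteristic line as $Z_\tau = q + \tau q'$ and $Z'_\tau = q - \tau q'$. By Definition \ref{def:OutContBill}, $Z$ and $Z'$ are harmonic conjugates with respect to $q,q'$, so correspondents are exactly the pair $\pm\tau$. Let $B$ and $B'$ denote the symmetric bilinear forms of $Q$ and $Q'$.
\begin{itemize}
\item Since $2B(q,q') = dQ_q(X_Q) = \omega(X_Q,X_Q)=0$ and $Q(q)=0$, we get $Q(Z_\tau)=\tau^2 Q(q')$.
\item For $Q'$, I expect $Q'(q')$ to be a multiple of $Q(q)$ (this is what ``$Q'$ is the polar quadric'' should mean), so $Q'(q')=0$.
\item I also expect $B'(q,q')=0$ on $\Sigma$, by direct computation or the symmetric argument.
\end{itemize}
Then $Q'(Z_\tau) = Q'(q)$, and
\[ \frac{Q(Z_\tau)}{Q'(Z_\tau)} = \tau^2\,\frac{Q(q')}{Q'(q)}, \]
which depends only on $\tau^2$ and on $q$. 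If the last ratio turns out to be a constant on $\Sigma$ (possibly after a rescaling of $X_Q$, which I expect to reduce to a constant such as $1$ or $-1$), then $\sigma^2 = \tau^2\cdot\text{const}$. In that case $Z$ and $Z'$ lie on the same leaf, which proves invariance. The domain is then where this ratio is positive, i.e.\ $\sigma\in(0,\infty)$.

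\textbf{Step 3: the linear map.} Here I will show that the displayed matrix $M_s$ sends $Z_\tau$ to $Z'_\tau$ whenever $\sigma$ corresponds to $\tau$.
\begin{itemize}
\item First check that $s\mapsto M_s$ is a one-parameter group, presumably $M_s=\exp(sA)$ with $A$ a Hamiltonian matrix, and that it preserves both $Q$ and $Q'$. The latter can be checked on the generator: $A^T S + S A = 0$ for the matrices $S$ of $Q$ and $Q'$.
\item Since $\omega$, $Q$ and $Q'$ are all preserved, $M_s$ commutes with the polarity $q\mapsto q'$ and preserves $\Sigma$, $\Sigma'$ and the leaves.
\item It then suffices to verify the identity $M_s(q+\tau q') \propto q - \tau q'$. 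I will do this with the half-angle parametrisation $\sigma = \tan(s/2)$, relating $\tau$ to $\sigma$ through Step 2. It is enough to check it on a convenient parametrisation of $\Sigma$, for instance rational in the rulings of the torus, or at a single point combined with $M_s$-equivariance and a one-parameter symmetry group of $\Sigma$.
\end{itemize}

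\textbf{Main obstacle.} The hard part is Step 3, and in particular identifying the correct normalisation of $\tau$ versus $\sigma$ and the sign conventions, so that the rotation-like matrix (with its unipotent shear terms $\cos 2s - 1$ and $\sin 2s$) matches harmonic conjugation exactly. I will first try to find a symplectic symmetry group of $\Sigma$ acting transitively enough on $\Sigma\setminus\ell_*$ that commutes with $M_s$. That reduces the verification to a single base point, where the computation is a $4\times4$ substitution.
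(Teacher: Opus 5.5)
Your Steps 1--2 specialize the paper's own argument (the proof of Proposition~\ref{prop:QuadR4}) and they go through. Let $A$ be the matrix of your $X_Q$, $(x,y,u,v)\mapsto(2y-u,-v,2v+x,y)$. One checks $B(q,Aq)=B'(q,Aq)=0$, $Q'(Aq)=-Q(q)$ and $Q(Aq)=2Q(q)+Q'(q)$, so the constant in Step 2 is exactly $1$ and $\sigma^2=\tau^2$. (The check you state in Step 1 is not the right one; what identifies $\Sigma'$ is $Q'(Aq)=-Q(q)$.)

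The genuine problem is Step 3. It is true that $A=\frac{d}{ds}M_s|_{s=0}$, but $M_s$ is \emph{not} a one-parameter group and is not $\exp(sA)$. Here $A^2=-I+N$ with $N\neq 0$ and $N^2=0$ (the Jordan block of item 7). Hence $\exp(sA)=\cos s\,I+\sin s\,A+\tfrac{s\sin s}{2}N+\tfrac{\sin s-s\cos s}{2}AN$ has secular terms, while $M_s$ is purely trigonometric. Concretely, $M_\pi=-I$, but $M_{\pi/2}^2$ has $(1,2)$ entry $-4$, so $M_{\pi/2}^2\neq M_\pi$ even projectively. Your plan to check invariance of $Q,Q'$ on a generator therefore rests on a false premise. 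So does the base-point reduction through a symmetry group commuting with a flow $M_s$.

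The missing idea is that $M_s$ is a Cayley transform, exactly the map of Proposition~\ref{prop:QuadR4} (the paper's $X$ is your $-A$). With $\sigma=\tan(s/2)$,
\[ M_s=(I+\sigma A)(I-\sigma A)^{-1}. \]
Using $A^2=-I+N$ you can verify the matrix identity $M_s(I-\sigma A)=I+\sigma A$. Hence $M_s(q-\sigma q')=q+\sigma q'$ for every vector $q$, not just on $\Sigma$. This settles your normalization question: $\tau=-\sigma$ with your sign, and the other correspondent is given by $M_s^{-1}=M_{-s}$. No parametrization of $\Sigma$ or symmetry group is needed, and invariance of the leaves then follows from Step 2.

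Finally, for the domain you only prove one inclusion and assert the other. For the converse, the relations above give, for any $q$ and $Z=(I+\tau A)q$,
\[ Q(Z)-\tau^2Q'(Z)=(1+\tau^2)^2\,Q(q). \]
Moreover $I+\sigma A$ is invertible, since the eigenvalues of $A$ are $\pm i$; this also shows $\Sigma$ has no singular points. So every $Z$ with $Q(Z)/Q'(Z)=\sigma^2>0$ equals $q+\sigma q'$ with $q=(I+\sigma A)^{-1}Z\in\Sigma$, and it genuinely lies on a characteristic line.
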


\begin{prop}
   Apart from collinear orbits on the distinguished line $\ell_*=\{ y =v=0 \}$, the table $\Sigma = \{ v(v+x)=y(u-y)\}$ admits no periodic orbits.
\end{prop}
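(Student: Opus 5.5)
The plan is to use the explicit linear description of Proposition~\ref{prop:TwistTori} and reduce periodicity to a projective eigenvector condition for powers of the leaf matrix. A $k$-periodic orbit lies on a single invariant leaf $I_{\sigma^2}$, so the problem becomes: when does $M(s)^m\boldsymbol{Z}=\lambda\boldsymbol{Z}$ hold with $\lambda\in\R\setminus\{0\}$ and $m\neq 0$? Here $M(s)$ is the matrix of Proposition~\ref{prop:TwistTori} and $s\in(0,\pi)$ is determined by $\sigma\in(0,\infty)$. In particular $\sin s>0$ on every leaf.

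\emph{Step 1: reduce to iterating $M(s)$.} Each point of the domain has (at most) two correspondents: $M(s)\boldsymbol{Z}$ and, by symmetry of the harmonic-conjugate relation, $M(s)^{-1}\boldsymbol{Z}$. Hence along an orbit we have $Z_j=M(s)^{m_j}Z_0$, where $m_j\in\Z$ is the net number of forward steps. If some $m_j$ with $j\neq 0$ vanishes while the orbit is nontrivial, the orbit backtracks, which only produces points on a single characteristic line. These collinear configurations are excluded by the convention after Definition~\ref{def:periodic}. So a genuine periodic orbit forces $M(s)^m Z_0=Z_0$ projectively for some $m\neq 0$. I expect this bookkeeping, namely justifying that the second correspondent is exactly $M(s)^{-1}Z$ on the same leaf, to be the only delicate point. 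I would check it directly from the harmonic-conjugate symmetry $Z\leftrightarrow Z'$ in Definition~\ref{def:OutContBill}.

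\emph{Step 2: diagonalize the structure of $M(s)$.} Introduce complex coordinates $a=x+iu$ and $b=y+iv$. Reading off the rows of the matrix, one finds
\[ b'=e^{is}b,\qquad a'=e^{is}a-i\,(e^{2is}-1)\,b=e^{is}\bigl(a+2\sin s\, b\bigr), \]
so $M(s)$ is a rotation by $s$ composed with a complex shear along $b$. By induction,
\[ M(s)^m:(a,b)\mapsto \bigl(e^{ims}(a+2m\sin s\, b),\; e^{ims}b\bigr). \]

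\emph{Step 3: solve the eigen-condition.} Suppose $b\neq 0$. The second coordinate forces $e^{ims}=\lambda\in\R$. The first coordinate then gives $\lambda(a+2m\sin s\,b)=\lambda a$, hence $2m\sin s\,b=0$. This is impossible, since $m\neq0$, $\sin s>0$ and $b\neq0$. Therefore $b=0$, that is $y=v=0$, and the orbit lies on $\ell_*$. On $\ell_*$ the map acts by the rotation $a\mapsto e^{is}a$, and the resulting orbits are collinear; these are exactly the exceptions in the statement.
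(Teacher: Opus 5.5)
Your argument is essentially the paper's. The paper writes the leaf map as $(\zeta,\eta)\mapsto e^{is}(\zeta+2\sin s\,\eta,\eta)$ with $\zeta=x+iu$, $\eta=y+iv$, and then projects to $\mathbb{CP}^1$ via $z=\zeta/\eta$, where the map becomes the translation $z\mapsto z+2\sin s$. Your formula for $M(s)^m$ and the eigenvector equation are the same computation carried out before projecting.

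One correction is needed in Step 1. A net-zero back-and-forth sequence such as $Z_0, M(s)Z_0, M(s)^2Z_0, M(s)Z_0, Z_0,\dots$ is not collinear in general. For example, take $Z_0=(0:1:0:0)$ on the leaf $\sigma=1$, so $s=\pi/2$: the points $(0:1:0:0)$, $(0:0:2:1)$, $(-4:-1:0:0)$ span a projective plane. So such sequences cannot be dismissed as lying on one characteristic line. They must instead be excluded by the convention that an orbit iterates a single branch of the correspondence. That convention gives $m_k=\pm k\neq 0$, and it is exactly what the paper's proof tacitly assumes.
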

\begin{proof}
    In the complex coordinates $\zeta =x + iu, \eta= y+iv$, the map is given by the (real) projectivization of:
    \[ (\zeta,\eta)\mapsto  (\zeta',\eta') = e^{is}(\zeta +2\sin s~\eta, \eta).\]
    Where for points not on $\Sigma$, we have $\eta\ne 0$ and $\sin s\ne 0$. If there were a periodic orbit, it would project to a periodic orbit under $\mathbb{C}^2\to \mathbb{CP}^1$. But in the affine chart
    \[(\zeta:\eta)\longleftrightarrow \zeta/\eta = z \]
    this map is simply $z\mapsto z +2\sin s$, and has no periodic orbits.
\end{proof}

On this distinguished line, there are periodic orbits of any period: any two points are in contact correspondence. Generally, when we speak of genuine periodic orbits, we will have non-collinear ones in mind (see Remark \ref{rmk:collinear}).


\subsection{Cylinders (pinched tori)}\label{sec:Cyl}

Now we will consider a degenerate quadratic surface:
\[ \Sigma = \{ x^2 + y^2 = v^2 \}\subset\mathbb{RP}^3, \]
topologically a pinched torus with a singular point at $(0:0:1:0)$, which we see as a standard cylinder in the affine chart $\{ v = 1\}$. For this table, the dynamics splits into a product of usual outer billiards in a plane together with a shift along the cylinder's axis. More explicitly:
\begin{prop}
    For the table $\Sigma = \{ x^2 + y^2 = v^2\}$, the outer contact billiard map is given in the affine chart $(x,y,u) \longleftrightarrow (x:y:u:1)$ as follows: $(x,y,u)$ and $(x',y',u')$ are in correspondence over $\Sigma$ when (see Figure \ref{fig:Supp})
    \begin{enumerate}
        \item $(x,y)$ and $(x',y')$ are in symplectic outer billiard correspondence over the unit circle and,
        \item the vertical displacement $u'-u$ is the product $pd$ of the distance $d$ between $(x,y), (x',y')$, with the (signed) distance $p$ from the origin to the line joining $(x,y)$ and $(x',y')$.
    \end{enumerate}
\end{prop}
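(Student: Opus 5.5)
The plan is a direct computation in the affine chart $\{v=1\}$, using Proposition~\ref{prop:SGrad} to find the contact points and then reading off what harmonic conjugation means in that chart.

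\emph{Step 1: the contact points.} Write $f=x^2+y^2-v^2$ and $\omega=dx\wedge dy+du\wedge dv$. Solving $\omega(\cdot,X_f)=df$ componentwise gives
\[ X_f=(-2y,\;2x,\;2v,\;0). \]
A regular point of $\Sigma$ away from the singular point $(0:0:1:0)$ has $v\neq 0$. Indeed, if $v=0$ then $x=y=0$, which is the singular point. So every such $q$ can be written $q=(x_0:y_0:u_0:1)$ with $x_0^2+y_0^2=1$ and $u_0$ arbitrary. By Proposition~\ref{prop:SGrad} its contact point is
\[ q'=(-y_0:x_0:1:0). \]
This is the point at infinity of the chart in the direction $(-y_0,x_0,1)$. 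Consequently the characteristic line at $q$ is the affine line through $(x_0,y_0,u_0)$ with this direction. Its planar projection is the tangent line to the unit circle at $(x_0,y_0)$, and it rises one unit in $u$ per unit of planar arclength.

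\emph{Step 2: harmonic conjugation becomes a midpoint.} For the lifts $\boldsymbol q=(x_0,y_0,u_0,1)$ and $\boldsymbol q'=(-y_0,x_0,1,0)$, Definition~\ref{def:OutContBill} requires
\[ \boldsymbol Z=\boldsymbol q+\lambda\boldsymbol q', \qquad \boldsymbol Z'=\boldsymbol q-\lambda\boldsymbol q' \]
for some $\lambda\neq 0$, up to rescaling. Both lifts have last coordinate $1$, so in the chart they read
\[ (x_0\mp\lambda y_0,\; y_0\pm\lambda x_0,\; u_0\pm\lambda). \]
This is the familiar fact that harmonic conjugation with respect to a finite point and the point at infinity is reflection through the finite point.

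Condition 1 follows immediately. The planar points $(x,y)$ and $(x',y')$ lie on the tangent line at $(x_0,y_0)$ and are symmetric about it, which is symplectic outer billiards over the unit circle.

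For condition 2, the planar distance is $d=2|\lambda|$. The distance from the origin to the tangent line is $1$, and $u'-u=\mp 2\lambda$. So we must fix the sign of $p$ by the orientation of the segment from $(x,y)$ to $(x',y')$ relative to the origin. Concretely, $p$ is the signed value $\det\big((x,y),(x',y')-(x,y)\big)/d$, or its negative, whichever matches the chosen $\omega$. With this convention $u'-u=pd$ holds identically.

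\emph{Step 3: the converse.} Given points satisfying conditions 1 and 2, the planar outer billiard data determine the tangency point $(x_0,y_0)$ and $\lambda$. Then $u_0$ is determined as the average $(u+u')/2$, so the pair arises from $q=(x_0:y_0:u_0:1)\in\Sigma$. One should also check that correspondents with $v=0$ cannot occur outside degenerate cases, since both lifts above have $v=1$.

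The main obstacle is only bookkeeping: fixing the sign convention for the signed distance $p$ consistently with the orientation induced by $\omega$. The rest is the routine symplectic-gradient computation.
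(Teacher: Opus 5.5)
Your proof is correct and follows essentially the paper's route. You show that the contact point of each tangent plane of $\Sigma$ lies at infinity, so harmonic conjugation reduces to the affine midpoint along a characteristic line of direction $(-y_0,x_0,1)$, which yields $u'-u=pd$. The paper does the same computation for a general cylinder $f(x,y)=\text{cst.}$ using the chart contact form $du=x\,dy-y\,dx$ rather than the symplectic gradient. Your sign hedge is unnecessary: with your lifts, $\det\big((x,y),(x'-x,y'-y)\big)=-2\lambda(x_0^2+y_0^2)=-2\lambda=u'-u$, so $p=\det\big((x,y),(x',y')-(x,y)\big)/d$ works with no ambiguity.
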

\begin{proof}
    The proof is essentially the same for a generalized cylinder, given by $f(x,y) = cst.$, in the affine chart $\{ v = 1\}$, where the contact structure is $\xi|_{(x,y,u)} = \{ du = xdy - ydx\}$. Note that the contact points of the vertical tangent planes lie at infinity, so that the harmonic conjugate condition is given in this affine chart by taking usual affine midpoints along the characteristic lines (see \S \ref{sec:affine} for the more general situation). One computes that the characteristic lines along the generalized cylinder $\{ f(x,y) = cst.\}$ are directed by: \[ {\bf v} = \frac{-f_y\partial_x + f_x\partial_y + (xf_x +yf_y)\partial_u}{\sqrt{f_x^2 + f_y^2}}\] 
    where $\frac{-f_y\partial_x + f_x\partial_y}{\sqrt{f_x^2 + f_y^2}}$ directs the unit tangent to the level set $f(x,y) = cst.$, while $p = \frac{xf_x +yf_y}{\sqrt{f_x^2 + f_y^2}}$ is the signed distance of this tangent to the origin so that $(x'-x,y'-y,u'-u) = {\bf v} d$ and the vertical shift $u'-u$ is as claimed.
\end{proof}

    \begin{center}
        \includegraphics[width=0.5\linewidth]{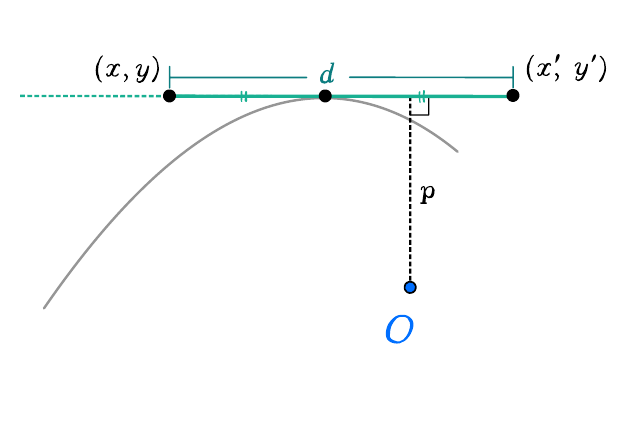}
        \figcaption{The points $(x, y)$, $(x',y')$ are in outer correspondence over $\{f(x,y)=cst\}$. The support function of the tangent line connecting $(x,y), (x',y')$ is $p$.}
        \label{fig:Supp}
    \end{center}

\begin{rmk}
    Note that, in this case, the harmonic conjugate condition over this certain table has reduced in this affine chart to the usual notion of midpoint. 
    We will see in Section \ref{sec:affine} 
    (Theorem \ref{thm:AffTable}) the reciprocal result: if the notion of harmonic conjugate 
    and midpoint coincide in some affine chart, then the table must be a ``generalized 
    cone''.
\end{rmk}

\begin{rmk}
The vertical displacement along an orbit is determined by the net accumulation of the successive signed areas, $pd$, over the orbit.  For a general closed cone, one can always choose an appropriate symplectic basis so that in the affine chart $\{ v = 1\}$, the $u$-axis is contained in the interior of the corresponding generalized cylinder. In particular, the cumulative vertical displacements are then strictly increasing and there can be no periodic orbits. The singular point of the generalized cone acts as a repeller/attractor (as if one had glued together the singular poles on the ellipsoid example \ref{sec:Sphere} above).

\end{rmk}


\section{Periodic orbits}\label{sec:periodic}

The periodic trajectories in our quadratic examples above are surprisingly rare. This 
raises the question of which periods can occur for the outer contact billiard 
correspondence. In this section, we consider some basic properties of periodic outer 
contact billiards orbits when $\dim V  = 4, \mathbb{P}V = \mathbb{RP}^3$.\\

Let $(Z_j)_{j\in \Z}$ be an orbit of the contact billiard correspondence. Every pair of 
consecutive points $Z_j$, $Z_{j+1}$ is joined, by definition, by a characteristic tangent 
line, and is therefore contained in a Legendrian line of $(\mathbb{P}V,\xi)$ (see Figure 
\ref{fig:contactcorrespondence}). Consequently, every periodic orbit determines vertices 
of some closed Legendrian polygon (a discrete Legendrian knot). A related notion 
(equivalent when $\dim V=4$) is explored in \cite{ConOvsLagrConf}.


\begin{center}
    \centering
     \includegraphics[width=0.4\linewidth]{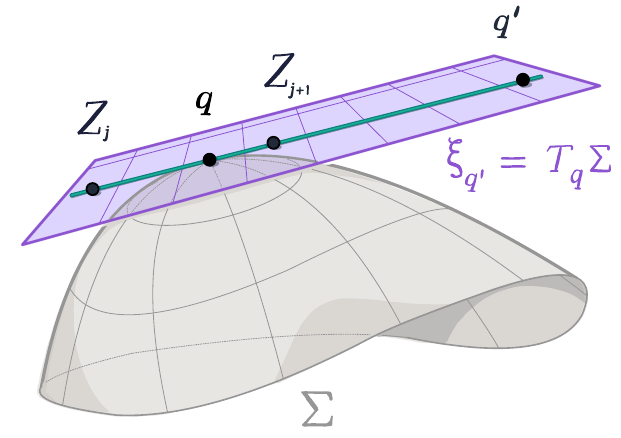}
    \figcaption{If $q\in Z_j\wedge Z_{j+1}$ is the reflecting point on the table with contact point $q'$ then $\xi_{q'}\supset Z_j\wedge Z_{j+1} = q\wedge q' \ni q'$.}
    \label{fig:contactcorrespondence}
\end{center}



This geometric interpretation already imposes strong restrictions on the possible periods. In contrast to outer symplectic billiards, where 3-periodic orbits occur over any compact table (Theorem 2 of \cite{albers2024outer}), for outer contact billiards:

\begin{prop}\label{prop:No3RP3}
    For any table $\Sigma \subset (\mathbb{RP}^3,\xi)$, the associated outer contact 
    billiard correspondence admits no non-collinear 3-periodic orbits. 
\end{prop}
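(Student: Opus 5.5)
The plan is to turn the statement into a linear-algebra obstruction in $(\R^4,\omega)$: show that every orbit edge is isotropic, so a non-collinear triangle would produce a $3$-dimensional isotropic subspace of a $4$-dimensional symplectic space, which cannot exist.

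\textbf{Step 1: characteristic lines lift to isotropic planes.} Let $q\in\Sigma$ be a regular point with $q'=(T_q\Sigma)^\omega\neq q$. By Definition~\ref{def:Polarity}, $T_q\Sigma=\xi_{q'}=\{\boldsymbol{u}:\omega(\boldsymbol{u},\boldsymbol{q}')=0\}$. Since $q\in T_q\Sigma$, we get $\omega(\boldsymbol{q},\boldsymbol{q}')=0$. Hence the plane $\mathrm{span}\{\boldsymbol{q},\boldsymbol{q}'\}\subset\R^4$ is isotropic, because $\omega$ is skew and vanishes on the pair of basis vectors. This is the Legendrian property illustrated in Figure~\ref{fig:contactcorrespondence}. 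It follows that any two points $Z,Z'$ on the characteristic line $q\wedge q'$ have lifts satisfying $\omega(\boldsymbol{Z},\boldsymbol{Z}')=0$.

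\textbf{Step 2: apply this to all three edges.} Let $(Z_j)$ be a $3$-periodic orbit, so $Z_1,Z_2,Z_3$ are pairwise related through the cyclically consecutive pairs $(Z_1,Z_2)$, $(Z_2,Z_3)$, $(Z_3,Z_1)$. By Definition~\ref{def:OutContBill}, each pair lies on some characteristic line. Step~1 then gives
\[\omega(\boldsymbol{Z}_1,\boldsymbol{Z}_2)=\omega(\boldsymbol{Z}_2,\boldsymbol{Z}_3)=\omega(\boldsymbol{Z}_3,\boldsymbol{Z}_1)=0.\]
Therefore $W=\mathrm{span}\{\boldsymbol{Z}_1,\boldsymbol{Z}_2,\boldsymbol{Z}_3\}$ is an isotropic subspace.

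\textbf{Step 3: contradiction.} Because the orbit is non-collinear, the three lifts are linearly independent, so $\dim W=3$. But an isotropic subspace satisfies $W\subset W^\omega$, and $\dim W^\omega=4-\dim W$, so $\dim W\le 2$. This contradiction proves the claim.

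The only point requiring care is Step~1 at singular points $q=q'$. These are excluded, since there the characteristic line is undefined and so they do not enter the correspondence. The collinear case is exactly where $\dim W\le 2$, which explains why the hypothesis is needed. I expect no real obstacle: the argument is essentially the observation that the edges are Legendrian, combined with the bound on the dimension of isotropic subspaces.
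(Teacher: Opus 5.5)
Your proposal is correct and follows the paper's proof. Each edge lies on a characteristic (Legendrian) line, so the linearly independent lifts of a non-collinear triangle would span a $3$-dimensional isotropic subspace of $(\R^4,\omega)$, which is impossible because the maximal isotropic subspaces there are Lagrangian $2$-planes; your Step 1 simply spells out the isotropy $\omega(\boldsymbol{q},\boldsymbol{q}')=0$ that the paper takes from the discussion preceding the proposition.
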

\begin{proof}
    A non-collinear 3-periodic outer contact billiard orbit would have $Z_1, Z_2, Z_3$ at 
    the vertices of some Legendrian triangle in $(\mathbb{RP}^3, \xi)$. There are no 
    Legendrian triangles here as the following argument shows: a Legendrian triangle would 
    lift to an independent triple $\boldsymbol{Z_1}, \boldsymbol{Z_2}, \boldsymbol{Z_3} \in
    (\R^4,\omega)$, with $\omega(\boldsymbol{Z_1}, \boldsymbol{Z_2}) = 
    \omega(\boldsymbol{Z_2}, \boldsymbol{Z_3}) = \omega(\boldsymbol{Z_3}, \boldsymbol{Z_1}) 
    = 0$, and we would have a 3-dimensional isotropic subspace spanned by the 
    $\boldsymbol{Z}_j$'s. But $\omega$ is non-degenerate, and its largest isotropic 
    subspaces in $\R^4$ are the Lagrangian 2-planes.
\end{proof}

\begin{rmk}
\label{rmk:collinear}
    The non-collinearity assumption is necessary. Indeed, fix some Legendrian line 
    $L\subset \xi_{q'}$ through a given point $q'$ and choose three distinct points 
    $Z_1,Z_2, Z_3\in L\backslash q'$. For each pair $(Z_j, Z_{j+1})$, take the reflection 
    point $q_j\in L$ as the harmonic conjugate of $q'$ over $Z_j, Z_{j+1}$. Any smooth 
    hypersurface tangent to $\xi_{q'}$ at the points $q_j\in L\subset \xi_{q'}$ admits, by 
    construction, $Z_1, Z_2, Z_3$ as a collinear 3-periodic orbit. By the same 
    construction, for any $n\in \mathbb{N}$, there exist smooth tables admitting collinear $n$-periodic orbits as well. 
\end{rmk}

On the other hand, non-degenerate 4-periodic outer contact billiards orbits do exist (as 
we have seen in Section \ref{sec:ClTori}). Let us briefly comment on the non-collinear 
Legendrian quadrilaterals in $(\mathbb{RP}^3,\xi)$, all of which are equivalent under 
projectivized symplectic transformations, $\text{PSp}_4(\R)$, i.e., in an appropriately 
adapted symplectic basis. Indeed, by Proposition~\ref{prop:No3RP3}, the vertices cannot 
lie in a projective plane, and therefore admit linearly independent lifts 
$\boldsymbol{Z_1}, \ldots , \boldsymbol{Z_4} \in \R^4$. As consecutive vertices are joined 
by Legendrian lines we have $\omega(\boldsymbol{Z_j}, \boldsymbol{Z_{j+1}})=0$ (where the 
subindices are read modulo four). After rescaling the lifts, we may further assume that 
$\omega(\boldsymbol{Z_1}, \boldsymbol{Z_3}) = \omega(\boldsymbol{Z_2}, \boldsymbol{Z_4}) 
=1,$ all other pairings being zero. In this symplectic basis, ${\boldsymbol{Z}}_1, \ldots, 
{\boldsymbol{Z}}_4$, we have $\omega = dx\wedge dy + du\wedge dv$ for 
\[ Z_1 = \text{span}\{ \partial_x \}, ~Z_2 = \text{span}\{ \partial_u\}, ~Z_3 = \text{span}\{ \partial_y\}, ~Z_4 = \text{span}\{ \partial_v\}. \]

To construct a table admitting this $4$-periodic orbit, it suffices to choose reflection 
points $q_j\in Z_j\wedge Z_{j+1}\backslash \{ Z_j, Z_{j+1}\}$ on each edge (see Figure 
\ref{fig:contactcorrespondence}). Such a choice determines the corresponding contact point 
$q_j'$ as the harmonic conjugate of $q_j$ with respect to $Z_j, Z_{j+1}$. The table 
$\Sigma$ must contain the points $q_j$, and satisfy $T_{q_j}\Sigma = \xi_{q_j'}$. Among 
arbitrary smooth tables, satisfying these conditions presents no obstacle. On the other 
hand, the situation is more rigid within a prescribed family of tables where it is not 
immediate whether one can always fit such a table to the given 4-periodic orbit. For the 
family of quadratic tables, we find:


\begin{prop}\label{prop:4PerQuad}
    The {\em only} quadratic table in $\mathbb{RP}^3$ admitting a non-degenerate 4-periodic 
    orbit is a Clifford torus given by:
    \[   x^2 + y^2 = \frac{\sqrt{2}-1}{\sqrt{2} + 1}(u^2 + v^2) \]
    in appropriate symplectic coordinates ($\omega = dx\wedge dy + du\wedge dv$). It admits 
    a 2-parameter family (invariant torus) of 4-periodic orbits (see Figure 
    \ref{fig:4periodic} above). For example:
\[ (1:1:1:1), ~ (0:-1:0:1), ~(-1:1:-1:1), ~ (1:0:-1:0).  \]
\end{prop}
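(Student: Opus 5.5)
We work in the normalized symplectic basis described just before the statement. By Proposition \ref{prop:No3RP3}, any non-degenerate 4-periodic orbit has independent lifts, and after a $\text{PSp}_4(\R)$ change of coordinates (and rescaling) we may take $Z_1=[\partial_x], Z_2=[\partial_u], Z_3=[\partial_y], Z_4=[\partial_v]$ with $\omega=dx\wedge dy+du\wedge dv$. So it suffices to classify quadratic forms $Q$ on $\R^4$ whose table $\Sigma=\{Q=0\}$ admits this particular quadrilateral as an orbit, and then show the result is $\text{PSp}_4$-equivalent to the stated Clifford torus.

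\emph{Setting up the conditions.} On each edge $Z_j\wedge Z_{j+1}$ choose the reflection point. In lifts, $\boldsymbol q_j=\boldsymbol Z_j+\lambda_j\boldsymbol Z_{j+1}$ with $\lambda_j\ne 0,\infty$. Harmonic conjugation then forces $\boldsymbol q_j'=\boldsymbol Z_j-\lambda_j\boldsymbol Z_{j+1}$. By Proposition \ref{prop:SGrad}, writing $Q(\boldsymbol z)=\tfrac12\boldsymbol z^TA\boldsymbol z$ with $A$ symmetric, the conditions become:
\begin{enumerate}
\item $Q(\boldsymbol q_j)=0$;
\item $J A\boldsymbol q_j=\mu_j \boldsymbol q_j'$ for some scalar $\mu_j\neq0$, where $J$ is the matrix of $\omega^{-1}$.
\end{enumerate}
Equivalently, $A\boldsymbol q_j=\mu_j\,\omega(\boldsymbol q_j',\cdot)$. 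These are linear in the ten entries of $A$ for fixed $\lambda_j,\mu_j$. The second condition alone gives $4\times 4=16$ scalar equations. Note that the first condition follows from the second, because $\omega(\boldsymbol q_j',\boldsymbol q_j)=0$ on a Legendrian line. So we get $16$ linear equations on $A$ and the $\mu_j$, with parameters $\lambda_1,\dots,\lambda_4$.

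\emph{Solving.} We then write out $\omega(\boldsymbol q_j',\cdot)$ explicitly. For instance, $\boldsymbol q_1'=\partial_x-\lambda_1\partial_u$ gives the covector $-(dy)\cdot$ stuff. Each $A\boldsymbol q_j$ thus has prescribed support on two coordinates. This determines the columns $A\partial_x,\dots,A\partial_v$ in two ways, from the two edges meeting at each vertex. Consistency of these two determinations, together with symmetry of $A$, gives polynomial relations among the $\lambda_j,\mu_j$. We expect these to force the following:
\begin{enumerate}
\item $A$ has the block form of a sum $a(x^2+y^2)+b(u^2+v^2)$ plus possibly cross terms that are killed by the equations;
\item the $\lambda_j$ are determined up to sign;
\item the ratio $b/a$ satisfies a quadratic equation whose positive root is $(\sqrt2-1)/(\sqrt2+1)$ (or its reciprocal, which is equivalent after swapping the symplectic planes).
\end{enumerate}
Regularity requires $A$ nondegenerate and $\lambda_j\mu_j\ne0$, which excludes degenerate branches. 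As a cross-check, for Clifford tori Proposition \ref{prop:ClTori} gives period $4$ exactly when $\alpha,\beta\in\{\pm\pi/2,\pm 3\pi/4,\pm\pi/4\}$ combine to $4\alpha,4\beta\equiv 0$ modulo $\pi$ projectively. With $c=|\tan\frac\beta2/\tan\frac\alpha2|$, the non-collinear choice $\alpha=-3\pi/4,\beta=\pi/4$ gives $c=\tan(\pi/8)/\tan(3\pi/8)=(\sqrt2-1)/(\sqrt2+1)$, matching the claimed value.

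\emph{Expected obstacle and conclusion.} The main difficulty is the case analysis in the polynomial consistency system, since a priori $A$ has ten entries and other quadric types (ellipsoids, sheared tori, cylinders, and the rest of the appendix list) must be ruled out. If the direct elimination becomes unwieldy, I would instead use the classification in Appendix \ref{sec:Quads} together with the explicit linear dynamics on invariant leaves (Propositions \ref{prop:QuadRP3}, \ref{prop:QuadR4}). For each normal form, we check whether the leaf map can have projective order $4$ with a non-collinear orbit. The hyperbolic and unipotent factors (ellipsoid, sheared torus, cylinder) never do, which leaves only Clifford tori and the computation above. Finally, the two-parameter family is the whole invariant torus $I_{\sigma^2}$ on which $\alpha=-3\pi/4$, and the listed orbit is checked directly.
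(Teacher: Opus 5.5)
\textbf{The proposal has a genuine gap: the decisive computation is never carried out, and the outcome you predict for it is wrong.} Your setup is the paper's: the vertex-adapted symplectic basis, one reflection point per edge together with its harmonic conjugate, and the observation that $q_j\in\Sigma$ follows from tangency. That last point holds because $\omega(\boldsymbol q, X_Q(\boldsymbol q))$ is proportional to $Q(\boldsymbol q)$ and the edges are isotropic. But the content of the proof is the solution of that system.

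Your predictions fail as follows.
\begin{enumerate}
\item \emph{The block form is impossible.} In the basis where the $Z_j$ are the coordinate axes, the table cannot be $a(x^2+y^2)+b(u^2+v^2)$ after killing cross terms. For such $Q$ the symplectic gradient maps $\mathrm{span}\{\partial_x,\partial_u\}$ into $\mathrm{span}\{\partial_y,\partial_v\}$, so $q_1'\notin Z_1\wedge Z_2$.
\item \emph{What the vanishing-component equations give.} Use your $\boldsymbol q_j=\boldsymbol Z_j+\lambda_j\boldsymbol Z_{j+1}$. There are eight equations saying that the two components of $X_Q(\boldsymbol q_j)$ which vanish in $\boldsymbol q_j'$ are zero. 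They express the four diagonal coefficients and the $yu$, $xv$, $yv$ coefficients through the $xu$ coefficient, which must be nonzero, else $Q\equiv 0$. They also force $(\lambda_1\lambda_2\lambda_3\lambda_4)^2=1$.
\item \emph{The branch $+1$.} Here $Q=(\lambda_1x+y/\lambda_2-u-\lambda_1\lambda_4v)^2$ up to scale, so $X_Q$ vanishes on the table and $\mu_j=0$. It is this, not nondegeneracy of $A$, that excludes the branch. Cones are legitimate tables and may not be discarded a priori.
\item \emph{The branch $-1$.} The four proportionality equations force $\lambda_1\lambda_3=1$ and $\lambda_2\lambda_4=-1$, and they kill the $xy$ and $uv$ coefficients. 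In the conformally symplectic coordinates $(X,Y,U,V)=(\lambda_1x,\,y/\lambda_2,\,u,\,\lambda_1v/\lambda_2)$ this leaves
\[ Q\propto X^2+Y^2+U^2+V^2-2U(X+Y)+2V(X-Y). \]
\end{enumerate}
So exactly the terms coupling the two symplectic planes survive. Two of the $\lambda_j$ stay free (the paper's two-parameter family of quadrics through a fixed quadrilateral) rather than being fixed up to sign. You then still have to identify this quadric as the stated Clifford torus. One way is the explicit symplectic change of basis the paper gives. Another is to check that $\omega^{-1}S$ has eigenvalues proportional to $\pm i(\sqrt2-1)$, $\pm i(\sqrt2+1)$ and invoke Williamson's normal form for this indefinite form.

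\textbf{Your fallback is a genuinely different route that could work, but as written it is incomplete.} It goes through Proposition~\ref{prop:QuadRP3} and the leaf maps of Proposition~\ref{prop:QuadR4}. The missing pieces are these.
\begin{enumerate}
\item \emph{Omitted normal forms.} You skip items 2, 5 and 6 (hyperbolic, loxodromic and parabolic tori). These are quick from the explicit matrices in the appendix: every real eigenvector of $M^4$ is either a fixed point of $M$ or lies on a line contained in the table.
\item \emph{The other cones.} The cones $\{xy=v^2\}$ and $\{vy=x^2\}$ are not covered at all. Proposition~\ref{prop:QuadR4} needs $S$ invertible, and the cylinder argument uses that the base curve is closed.
\item \emph{Singular leaves.} Leaves where $1/\sigma$ is an eigenvalue of $X$ need separate treatment.
\item \emph{The Clifford condition.} The right condition is that $M^4$ be scalar, i.e.\ $4\alpha\in\pi\Z$ and $e^{4i\alpha}=e^{4i\beta}$. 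Together with $c\neq1$ this leaves only $c=(\sqrt2\mp1)^2$, the two values being exchanged by swapping the symplectic planes.
\end{enumerate}
Even once completed, this route relies on the cited classification. The paper's direct elimination instead treats all quadratic forms at once, degenerate ones included.
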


\begin{proof}
Suppose that a quadratic table $\Sigma = \{ Q = 0\}$, where $Q$ is a quadratic form on 
$\R^4$, admits a non-collinear 4-periodic orbit. By the discussion preceding the 
proposition we may consider symplectic coordinates with  
\[Z_1 = (1:0:0:0), Z_2 = (0:0:1:0), Z_3 = (0:1:0:0), Z_4 = (0:0:0:1),\]
and $\omega = dx\wedge dy + du\wedge dv$. 
Our general quadratic form is then given in these coordinates by
\[Q = Ax^2 + By^2 + Cu^2 + Dv^2 + 2\alpha yu + 2\beta xu + 2\gamma xy + 2axv + 2byv + 2cuv.\]
For the quadratic surface $\Sigma = \{Q = 0\}$ to admit this orbit, we require some 
reflecting points
    $$q_j = \text{span}\{ \Lambda_j \boldsymbol{Z_j} + (1-\Lambda_j)\boldsymbol{Z_{j+1}}\}\in \Sigma, \quad \Lambda_j\ne 0,1$$
together with their harmonic conjugates
$$q_j' = \text{span}\{ \Lambda_j \boldsymbol{Z}_j + (\Lambda_j - 1)\boldsymbol{Z}_{j+1}\}$$
over $Z_j, Z_{j+1}$ to satisfy $T_{q_j}\Sigma = \xi_{q_j'}$. These conditions are a system 
of equations where the unknowns are the coefficients of $Q$ together with the four 
parameters $\Lambda_j$. At first sight, this appears to be an overdetermined problem: 
prescribing four points and four tangent hyperplanes on a quadratic typically imposes 
$4\cdot 3+ 4=16$ conditions, whereas there are only $9+4=13$ unknowns (up to an overall 
scaling of $Q$). As it turns out, many of these conditions are redundant here because of 
the special configuration of our points and hyperplanes under consideration. 

Recall that $Q$ is homogeneous of degree two. For $X_Q$ the symplectic gradient of $Q$ 
then
\[    \omega(\boldsymbol{x},X_Q(\boldsymbol{x})) = d_{\boldsymbol{x}}Q({\boldsymbol{x}}) = 2Q({\boldsymbol{x}}) \]
So that the tangency condition $T_q\Sigma = \xi_{q'}$, ie $\omega({\boldsymbol{q}}, 
{\boldsymbol{q}}') = 0$ for ${\boldsymbol{q}}' = X_Q({\boldsymbol{q}})$, already implies 
$q\in \Sigma$.


Therefore, it only remains to impose the tangency conditions $T_{q_j}\Sigma = \ker d_{q_j}Q = \xi _{q_j'}$. For $q = (x:y:u:v)\in \Sigma$, we compute its contact point, $\text{span}\{ X_Q({\boldsymbol{q}})\}$, is at:
     \[ q' = (-(By+\alpha u + \gamma x + bv): Ax+\beta u + \gamma y + av : -(Dv+ax+by+cu): Cu + \alpha y + \beta x  + cv).\]
     So, upon setting
     \[ \Lambda_j' := 1-\Lambda_j\ne 0,1 \]
     the tangency conditions $T_{q_j}\Sigma = \xi_{q_j'}$, consist in the following system:
     \[   (\Lambda_1:0: -\Lambda_1':0) = (-(\alpha \Lambda_1' + \gamma \Lambda_1): A\Lambda_1+\beta \Lambda_1' : -(a\Lambda_1 +c\Lambda_1'): C\Lambda_1' + \beta \Lambda_1), \]
     \[  (0:-\Lambda_2':\Lambda_2:0) = (-(B\Lambda_2'+\alpha \Lambda_2): \beta \Lambda_2 + \gamma \Lambda_2' : -(b\Lambda_2'+c\Lambda_2): C\Lambda_2+ \alpha \Lambda_2' ), \]
     \[ (0:\Lambda_3:0:-\Lambda_3') = (-(B\Lambda_3  + b\Lambda_3'):  \gamma \Lambda_3 + a\Lambda_3' : -(D\Lambda_3'+b\Lambda_3):  \alpha \Lambda_3 + c\Lambda_3'),\]
     \[  (-\Lambda_4':0:0:\Lambda_4) = (-( \gamma \Lambda_4' + b\Lambda_4): A\Lambda_4' + a\Lambda_4 : -(D\Lambda_4 +a\Lambda_4' ):  \beta \Lambda_4'  + c\Lambda_4).\]
     The system admits exactly two families of solutions:
      \begin{enumerate}
         \item $\Lambda_1\Lambda_2\Lambda_3\Lambda_4 = \Lambda_1'\Lambda_2'\Lambda_3'\Lambda_4'$, with
    \[ Q_+ = \left(x + \frac{\Lambda_1\Lambda_2}{\Lambda_1'\Lambda_2'}y - \frac{\Lambda_1}{\Lambda_1'}u - \frac{\Lambda_4'}{\Lambda_4}v \right)^2  \]
         \item $\Lambda_1\Lambda_3 = \Lambda_1'\Lambda_3'$, and, $\Lambda_2'\Lambda_4' =- \Lambda_2\Lambda_4$, with 
         \[ Q_- = x^2 +\left( \frac{\Lambda_1\Lambda_2}{\Lambda_1'\Lambda_2'}y\right)^2 + \left( \frac{\Lambda_1}{\Lambda_1'}u\right)^2 + \left( \frac{\Lambda_4'}{\Lambda_4}v\right)^2 - 2\frac{\Lambda_1}{\Lambda_1'}\left( \frac{\Lambda_1\Lambda_2}{\Lambda_1'\Lambda_2'}yu + ux  \right) -  2\frac{\Lambda_4'}{\Lambda_4}\left( xv + \frac{\Lambda_3'\Lambda_4'}{\Lambda_3\Lambda_4}yv  \right).  \]     
     \end{enumerate}
The first type of quadratic is degenerate: $Q_+ = 0$ just defines a hyperplane with no 
outer contact billiards dynamics. The second type of quadratic, $Q_- = 0$, upon 
diagonalizing is exactly the special Clifford torus stated above: set $\lambda_j = 
\Lambda_j/\Lambda_j'$ and consider the change of basis
\[ X + U = \frac{x-\lambda_1\lambda_2 y}{\sqrt{2}}, ~~V + Y = \frac{x + \lambda_1\lambda_2 y}{\sqrt{2}} \]
\[ V-Y = \lambda_1 u, ~~X-U = \lambda_2 v.\]

\end{proof}
\begin{rmk}
    From this last proof there is a 2-parameter family of quadratic surfaces admitting a 
    given fixed Legendrian quadrilateral as a 4-periodic orbit. The family is parametrized 
    by selecting any pair of points on adjacent sides of the Legendrian quadrilateral 
    through which said surface must pass (the remaining pair of points through which the 
    quadratic passes on the opposite sides are then determined uniquely through the 
    conditions $\Lambda_1\Lambda_3 = \Lambda_1'\Lambda_3'$, and, $\Lambda_2'\Lambda_4' =- 
    \Lambda_2\Lambda_4$ in item 2 above).
\end{rmk}


\section{Preserved contact structure}\label{sec:GenThms}

In this section, we establish some general properties of the outer contact billiard map. 
Our first result shows that the name ``contact billiards'' is justified.

\begin{thm}\label{thm:ContTransf}
    When a local diffeomorphism, the outer contact billiard map is a contactomorphism. 
\end{thm}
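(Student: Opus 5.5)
The plan is to lift everything to the cone over the table and reduce to the known fact that outer symplectic billiards are symplectic. Write $\Sigma=\{f=0\}$ with $f:V\dashrightarrow\R$ homogeneous of some degree $k$. Then $C\Sigma=\{f=0\}\subset V\setminus 0$ is a conical hypersurface. By Proposition \ref{prop:SGrad}, its characteristic direction at $\boldsymbol q$ is $X_f(\boldsymbol q)$, and this spans $q'$.

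Given $Z,Z'$ in correspondence over $q$, Definition \ref{def:OutContBill} lets us choose lifts with
\[\boldsymbol Z=\boldsymbol q+\boldsymbol q',\qquad \boldsymbol Z'=\boldsymbol q-\boldsymbol q',\qquad \boldsymbol q\in C\Sigma,\ \boldsymbol q'\in\R X_f(\boldsymbol q).\]
So the lifts are exactly in outer symplectic billiard correspondence over the cone $C\Sigma$: they lie on a characteristic line of $C\Sigma$ and have midpoint $\boldsymbol q$ on it. The resulting lifted map $\boldsymbol Z\mapsto \boldsymbol Z'=2\boldsymbol q-\boldsymbol Z$ commutes with positive scalings, since $C\Sigma$ is a cone and $X_f$ is homogeneous. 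Hence it descends to the outer contact billiard map $T$ on $\mathbb{P}V$.

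The key steps are as follows.

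First, show the lifted map $\Phi$ is symplectic, i.e.\ $\Phi^*\omega=\omega$, wherever it is a local diffeomorphism. This is Tabachnikov's theorem for outer symplectic billiards. I would reprove it directly for safety via a generating-function style computation. Parametrize by $(\boldsymbol q,t)$ with $\boldsymbol Z=\boldsymbol q+tX_f(\boldsymbol q)$ and $\boldsymbol Z'=\boldsymbol q-tX_f(\boldsymbol q)$. Then
\[\omega(d\boldsymbol Z',d\boldsymbol Z')-\omega(d\boldsymbol Z,d\boldsymbol Z) = -4\,\omega(d\boldsymbol q, d(tX_f)),\]
and it suffices to show that the restriction of $\omega$-pairing $d\boldsymbol q$ against $d(tX_f)=dt\,X_f+t\,dX_f$ vanishes as a 2-form. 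The term $\omega(d\boldsymbol q, X_f)\,dt$ equals $df(d\boldsymbol q)\,dt=0$ on $C\Sigma$. The term $t\,\omega(d\boldsymbol q, dX_f\, d\boldsymbol q)$ is symmetric in its arguments because the Hessian of $f$ is symmetric, so it vanishes as a 2-form.

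Second, pass to the quotient. $\Phi$ is homogeneous of degree one, $\Phi(\lambda \boldsymbol Z)=\lambda\Phi(\boldsymbol Z)$, and it preserves $\omega$. The Liouville vector field $E=\boldsymbol Z\,\partial_{\boldsymbol Z}$ is $\Phi$-invariant, so $\Phi$ also preserves $\alpha=\iota_E\omega$. The contact structure $\xi$ on $\mathbb{P}V$ is exactly the image of $\ker\alpha$, since $\xi_Z=\{\boldsymbol u:\omega(\boldsymbol u,\boldsymbol Z)=0\}$ by Definition \ref{def:Polarity}. Therefore $dT(\xi)=\xi$. Sign issues from lifting to $V\setminus 0$ versus $S(V)$ are harmless, because $\pm$ scaling preserves the lines.

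The main obstacle I expect is the well-definedness of the lift as a local map. The choice of $\boldsymbol q$ on the cone, of scale, and of sign of $\boldsymbol q'$ must be made smoothly and consistently so that $\Phi$ is a genuine local diffeomorphism commuting with scaling. I would handle this locally: near a given pair, fix a local section of the cone, use the hypothesis that $T$ is a local diffeomorphism, and take $\boldsymbol q$ as the unique midpoint in a neighborhood. Then extend $\Phi$ by homogeneity, using the third remark that only the conformal class of $\omega$ matters. A conformal rescaling at worst multiplies $\alpha$ by a function, which still preserves $\xi$.
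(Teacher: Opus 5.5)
Your proof is correct, but it takes a different route from the paper. You lift to outer symplectic billiards on the cone $C\Sigma$ and prove the lifted map $\Phi$ is symplectic. In that step the $dt$-term vanishes because $\boldsymbol q$ stays on $\{f=0\}$, and the $t$-term vanishes because $\omega(u,DX_f\,v)=D^2f(u,v)$ is symmetric. You then descend, using that a degree-one homogeneous symplectomorphism commutes with $E$ and therefore preserves $\iota_E\omega$.

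The paper skips the symplectic step. It shows directly that $\omega(\boldsymbol Z,\dot{\boldsymbol Z})=\omega(\boldsymbol Z',\dot{\boldsymbol Z'})$ along any smooth curve of tuples, which says the lift preserves the Liouville form $\iota_E\omega$ itself. This needs only first-order facts: $\omega(\boldsymbol q,\boldsymbol q')\equiv 0$ and $\dot{\boldsymbol q}\in\xi_{q'}$, whence $\omega(\boldsymbol{q'},\dot{\boldsymbol q})=0=\omega(\boldsymbol q,\boldsymbol{\dot{q'}})$. No Hessian is involved. Since $d(\iota_E\omega)=\mathcal L_E\omega=2\omega$, the paper's identity gives your symplecticity of $\Phi$ for free, so for homogeneous maps the two statements are equivalent.

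Each route has its advantage. Yours makes explicit the conceptual picture the paper only states in a remark: contact billiards as the scaling reduction of Tabachnikov's symplectic billiards on the cone, with the contact property inherited from a homogeneous symplectomorphism. The paper's computation is shorter and keeps the term $\omega(\boldsymbol{q'},\boldsymbol{\dot{q'}})$ visible. That is exactly what makes Corollary~\ref{corr:Uniqueness} immediate for the twisted rule $\boldsymbol Z'=\boldsymbol q+\beta\boldsymbol q'$; your midpoint-based symplecticity argument does not adapt as directly.

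The obstacle you anticipate is milder than you think. Since $q\neq q'$, the decomposition $\boldsymbol Z=\boldsymbol q+\boldsymbol q'$ along $q\oplus q'$ is unique. So once the reflection point $q(Z)$ is selected smoothly, $\Phi$ is canonical and homogeneous under all nonzero scalings, not only positive ones. The paper makes the same smooth-selection assumption tacitly when it takes smooth curves of tuples. No appeal to the conformal class of $\omega$ is needed.
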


That is, given a regular hypersurface $\Sigma\subset (\mathbb{P}V,\xi)$, the outer contact billiards correspondence over $\Sigma$ (Definition \ref{def:OutContBill}) defines a locus in $\mathbb{P}V \times \mathbb{P}V$ consisting of those pairs in outer contact billiards correspondence over $\Sigma$. When smooth and transverse to the fibers, this locus is given locally by the graph of a contactomorphism (see Remark \ref{rmk:ContGraph} below).

\begin{proof}
    Let $\Sigma\subset (\mathbb{P}V, \xi)$ be a regular hypersurface where $\xi$ is a standard 
    contact structure on $\mathbb{P}V$ induced by a linear symplectic structure 
    $(V,\omega)$. Suppose that the outer contact billiard correspondence determined by 
    $\Sigma$ is a local diffeomorphism on an appropriate domain, and consider a smooth 
    curve of tuples
    \[ t\mapsto (Z(t), q(t), Z'(t), q'(t)) \]
    in outer contact billiards correspondence over $\Sigma$. Since $Z,Z'$ are harmonic 
    conjugates over $q, q'$, we may choose lifts $\boldsymbol{Z}, \boldsymbol{q}, 
    \boldsymbol{Z'}, \boldsymbol{q'}\in V$ such that
    \[ \boldsymbol{Z} = \boldsymbol{q} + \boldsymbol{q'}, \quad \boldsymbol{Z'} = \boldsymbol{q} - \boldsymbol{q}'. \]
    Write $\boldsymbol{\dot{Z}}, \boldsymbol{\dot{q}}, \boldsymbol{\dot{Z'}}$, and 
    $\boldsymbol{\dot{q'}}$ for the corresponding derivatives.
    
    Since $q\in T_q\Sigma = \xi_{q'}$, we have
    \[ \omega(\boldsymbol{q}, \boldsymbol{q'}) = 0 \implies \omega(\boldsymbol{\dot{q}}, \boldsymbol{q'}) = \omega(\boldsymbol{\dot{q}'}, \boldsymbol{q}). \]
    In fact, since $\dot q\in T_q\Sigma = \xi_{q'}$ we have:
    \begin{equation}
        \label{eq:omega0}
        \omega(\boldsymbol{\dot{q}}, \boldsymbol{q'}) = 0 = \omega(\boldsymbol{\dot{q'}}, \boldsymbol{q}).
    \end{equation}
    
   Using Equation \ref{eq:omega0}, we compute
    \[ \omega(\boldsymbol{Z},\boldsymbol{\dot{Z}}) = \omega(\boldsymbol{q} + \boldsymbol{q'}, \boldsymbol{\dot{q}} + \boldsymbol{\dot{q'}}) = \omega(\boldsymbol{q}, \boldsymbol{\dot{q}}) + \omega(\boldsymbol{q'},\boldsymbol{\dot{q'}}) = \omega(\boldsymbol{q} - \boldsymbol{q'}, \boldsymbol{\dot{q}} - \boldsymbol{\dot{q'}}) = \omega(\boldsymbol{Z'}, \boldsymbol{\dot{Z'}}), \]
    and, in particular: 
    \[ \dot Z\in \xi_Z \iff \omega(\boldsymbol{Z},\boldsymbol{\dot{Z}}) = 0  = \omega(\boldsymbol{Z'}, \boldsymbol{\dot{Z'}}) \iff \dot Z'\in \xi_{Z'}.\] 
\end{proof}

Examining the proof more closely, one finds a stronger statement: in a certain sense, the 
outer contact billiards map (Definition \ref{def:OutContBill}) is the {\em only} such 
reflection rule generating contact transformations over any table.
\begin{corr}\label{corr:Uniqueness}
    Consider a non-trivial outer billiard reflection rule on $(\mathbb{P}V,\xi)$ which 
    reflects $Z$ to $Z'$ over characteristic lines of tables $\Sigma\subset\mathbb{P}V$. 
    Then the induced maps are local contact transformations if and only if they are given by 
    the reflection rule described in Definition \ref{def:OutContBill} above.
\end{corr}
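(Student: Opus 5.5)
We read a "reflection rule over characteristic lines" as follows. For each characteristic line $q\wedge q'$, there is a fixed projective rule sending $Z$ to $Z'$ along that line. Since the rule must be natural, i.e.\ $\mathrm{PSp}(V)$-equivariant and defined for every table, it can only depend on the two marked points $q,q'$. So it must be a map of the line fixing the configuration $\{q,q'\}$, determined by cross-ratio data. Concretely, choose lifts with $\boldsymbol{Z}=\boldsymbol{q}+\boldsymbol{q'}$; the rule then produces $\boldsymbol{Z'}=\boldsymbol{q}+\lambda\boldsymbol{q'}$ up to scale. Here $\lambda=\phi(\mathrm{cr})$ is some fixed function of the position of $Z$ on the line, and non-triviality means $\lambda\neq 1$ generically. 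The "if" direction is exactly Theorem \ref{thm:ContTransf}. The work is the "only if" direction: we must show that contactness over every table forces $\lambda\equiv -1$.

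We redo the computation in the proof of Theorem \ref{thm:ContTransf} with a general $\lambda$. Take a smooth curve of tuples $(Z,q,Z',q')$ with lifts $\boldsymbol{Z}=\boldsymbol{q}+\boldsymbol{q'}$ and $\boldsymbol{Z'}=\boldsymbol{q}+\lambda\boldsymbol{q'}$, with $\lambda$ now possibly varying along the curve. The relations \eqref{eq:omega0}, $\omega(\boldsymbol{q},\boldsymbol{q'})=\omega(\dot{\boldsymbol q},\boldsymbol{q'})=\omega(\dot{\boldsymbol{q'}},\boldsymbol q)=0$, still hold and use only the geometry of the table. Since $\omega(\boldsymbol q',\boldsymbol q')=0$, the term $\dot\lambda\,\boldsymbol{q'}$ contributes nothing. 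Hence
\[ \omega(\boldsymbol Z,\dot{\boldsymbol Z})=\omega(\boldsymbol q,\dot{\boldsymbol q})+\omega(\boldsymbol{q'},\dot{\boldsymbol{q'}}),\qquad \omega(\boldsymbol Z',\dot{\boldsymbol Z'})=\omega(\boldsymbol q,\dot{\boldsymbol q})+\lambda^2\,\omega(\boldsymbol{q'},\dot{\boldsymbol{q'}}). \]
Contactness means $\omega(\boldsymbol Z,\dot{\boldsymbol Z})=0\Leftrightarrow\omega(\boldsymbol Z',\dot{\boldsymbol Z'})=0$ for all such curves. We then need this equivalence to fail unless $\lambda^2=1$.

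To get that failure, the key step is to exhibit a table and a curve realizing
\[ \omega(\boldsymbol q,\dot{\boldsymbol q})=-\omega(\boldsymbol{q'},\dot{\boldsymbol{q'}})\neq 0, \]
for which $\dot Z\in\xi_Z$ but $\dot Z'\notin\xi_{Z'}$ whenever $\lambda^2\ne1$. The two quantities $a=\omega(\boldsymbol q,\dot{\boldsymbol q})$ and $b=\omega(\boldsymbol{q'},\dot{\boldsymbol{q'}})$ depend on the lifts, so we must also track the normalization. The rescalings must keep $\boldsymbol Z=\boldsymbol q+\boldsymbol q'$, and then $\boldsymbol{Z'}$ is determined. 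Because the rule is meant to apply to arbitrary tables, we can prescribe the $1$-jet of $q(t)$ and the $2$-jet of $\Sigma$ at $q$ (this controls $\dot q'$ through the second fundamental form) independently. We can also vary the point $Z$ along the line: moving $Z$ while fixing $q$ rescales $\boldsymbol q'$ relative to $\boldsymbol q$, which multiplies $b$ against $a$. Together these should let $a/b$ take any prescribed value, in particular $-1$. The quadratic tables of Section \ref{sec:Exs} give explicit test cases; for instance, on the Clifford torus family one checks directly that both $a$ and $b$ vary independently with $c$.

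Once $\lambda^2=1$ is forced, $\lambda=1$ gives $Z'=Z$, the trivial rule. Hence $\lambda=-1$, which is exactly harmonic conjugation with respect to $q,q'$ as in Definition \ref{def:OutContBill}. The main obstacle is the independence claim in the previous paragraph: we must show that $a$ and $b$ are genuinely independent, given that $\lambda$ may depend on the position of $Z$ and that the lifts are coupled by $\boldsymbol Z=\boldsymbol q+\boldsymbol q'$. We would first try a local computation in Darboux-type coordinates on $(\mathbb{RP}^3,\xi)$ with a graph table $\Sigma=\{v=h(x,y,u)\}$ and a free Hessian of $h$.
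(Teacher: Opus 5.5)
This is the paper's argument: the same normalization $\boldsymbol{Z}=\boldsymbol{q}+\boldsymbol{q'}$, the same identity $\omega(\boldsymbol{Z'},\dot{\boldsymbol{Z'}})=\omega(\boldsymbol{Z},\dot{\boldsymbol{Z}})+(\lambda^2-1)\,\omega(\boldsymbol{q'},\dot{\boldsymbol{q'}})$, and the same reduction to finding a direction with $\dot Z\in\xi_Z$ but $\omega(\boldsymbol{q'},\dot{\boldsymbol{q'}})\neq 0$. The paper merely asserts that such a direction exists (``we are free to move on a general table''), and you rightly flag this as the step still to be proved.

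To close it, keep $Z$ fixed and vary only $\dot q$; moving $Z$ along the line only multiplies $b/a$ by a positive factor, and it changes $\lambda$ if $\lambda$ depends on position. For $\Sigma=\{F=0\}$ and, up to positive factors, $\boldsymbol{q'}=X_F(\boldsymbol{q})$, the functional $a=\omega(\boldsymbol{q},\cdot)$ on $T_q\Sigma$ vanishes exactly on the characteristic direction $X_F(\boldsymbol{q})$, while $b=\mathrm{Hess}\,F_{\boldsymbol{q}}(X_F(\boldsymbol{q}),\cdot)$. So $a$ and $b$ are independent, and hence $\ker(a+b)\not\subset\ker b$, whenever $\mathrm{Hess}\,F_{\boldsymbol{q}}(X_F,X_F)\neq 0$; this holds, e.g., on every Clifford torus $\Sigma_c$ with $c\neq 1$, where it equals $8c(1-c^2)(u^2+v^2)$.
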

\begin{proof}
    Using the notation of the previous proof, suppose the reflection rule is given by 
    $\boldsymbol{Z'} = \boldsymbol{q} + \beta \boldsymbol{q'}$, where the coefficient 
    $\beta$ is given by some function determining the ``twisted'' reflection rule. Then we 
    have: $\omega(\boldsymbol{Z'}, \boldsymbol{\dot{Z'}}) = \omega(\boldsymbol{Z}, 
    \boldsymbol{\dot{Z}}) + (\beta^2 - 1)\omega(\boldsymbol{q'}, \boldsymbol{\dot{q'}})$. 
    We generate contact transformations over arbitrary tables when 
    $0 = \omega(\boldsymbol{Z}, \boldsymbol{\dot{Z}}) \iff 0 = \omega(\boldsymbol{Z'}, \boldsymbol{\dot{Z'}})$. That is, when $0 = (\beta^2 - 1)\omega(\boldsymbol{q'}, \boldsymbol{\dot{q'}})$. Since we are free to move on a general table, 
    $\omega(\boldsymbol{q'}, \boldsymbol{\dot{q'}})$ can be arbitrary, and the only such 
    choice of reflection rule is with $\beta = \pm 1$, i.e.~either that of
    Definition \ref{def:OutContBill} (when $\beta = -1$) or the trivial reflection rule, 
    $Z = Z'$ (when $\beta = 1$).
\end{proof}

\begin{rmk}\label{rmk:ContGraph}
    Given two contact manifolds $(N,\xi), (N',\xi')$ their product inherits a natural pair of hyperplane distributions $\pi^*\xi = \{ v\in T(N\times N') : \pi_*v \in \xi\}$ and $(\pi')^*(\xi')$ for $\pi, \pi'$ the standard projections onto each factor. A submanifold $\Gamma\subset N\times N'$ such that $(T\Gamma)\cap (\pi^*\xi) = (T\Gamma)\cap ((\pi')^*\xi')$ is, when transverse to the fibers, given locally by the graph of a contactomorphism.  
\end{rmk}


\subsection{Affine tables}
\label{sec:affine}

We note that harmonic conjugate reduces to taking midpoints in an affine chart when one of 
the points in the tuple is a point at infinity. Our outer contact billiards takes place in 
projective space, but naturally we would like to view it also in some affine chart. In 
such an affine chart, there appears a less complicated reflection rule since we now have a 
notion of midpoints.
\begin{center}
    \includegraphics[width=0.4\linewidth]{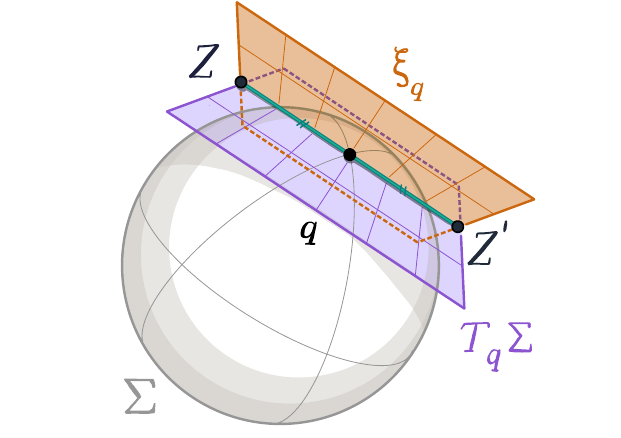}
    \figcaption{The midpoint reflection rule in an affine chart: $q\in \Sigma$ is the midpoint of $Z,Z'$}
\end{center}
Typically our outer contact billiards reflection and this midpoint rule will {\em not} 
coincide, however we may still ask: for which tables do these two rules coincide? When 
could this more familiar midpoint reflection rule generate contact transformations?

As it turns out, such tables must be of the type considered in our examples from 
\S \ref{sec:Cyl} above:

\begin{thm}\label{thm:AffTable}
    Let $\Pi_\infty\in \mathbb{P}V^*$ be a fixed hyperplane as the plane at infinity of an 
    affine chart. Then the following are equivalent:
    \begin{enumerate}
        \item the midpoint reflection rule over a table $\Sigma\subset\mathbb{P}V$ in this 
        affine chart generates a contact transformation of $(\mathbb{P}V,\xi)$,
        \item the midpoint reflection rule over $\Sigma$ coincides with the outer contact 
        billiards reflection rule over $\Sigma$ (Definition ~\ref{def:OutContBill}),
        \item $\Sigma$ is a generalized cone with vertex at the contact point $q_\infty\in \Pi_\infty$ of the plane at infinity (the point where $\Pi_\infty = \xi_{q_\infty}$).
    \end{enumerate} 
\end{thm}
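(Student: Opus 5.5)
I will prove the cycle (3)$\Rightarrow$(2)$\Rightarrow$(1)$\Rightarrow$(3). The basic observation, already used in \S\ref{sec:Cyl}, is this: in the affine chart with plane at infinity $\Pi_\infty$, if $A,B,C,D$ are collinear and $B$ lies at infinity, then $C,D$ are harmonic conjugates with respect to $A,B$ exactly when $A$ is the affine midpoint of $C,D$. Thus on a characteristic line through $q$, the midpoint rule agrees with harmonic conjugation with respect to $q,q'$ if and only if $q'\in\Pi_\infty$, provided $q\notin \Pi_\infty$. So the theorem reduces to describing the tables whose polar surface $\Sigma'$ lies in $\Pi_\infty$.

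\textbf{(3)$\Rightarrow$(2).} Suppose $\Sigma$ is a cone with vertex $q_\infty$. Then every tangent hyperplane $T_q\Sigma$ contains $q_\infty$. Since $\omega$ gives an inclusion-reversing polarity, $q_\infty\in \xi_{q'}$ is equivalent to $q'\in \xi_{q_\infty}=\Pi_\infty$. Hence $q'$ is always at infinity, and by the observation above the two rules coincide.

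\textbf{(2)$\Rightarrow$(1).} This follows at once from Theorem~\ref{thm:ContTransf}.

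\textbf{(1)$\Rightarrow$(3).} This is the main step. I would reuse the computation in Corollary~\ref{corr:Uniqueness}. Write the midpoint rule on a characteristic line as $\boldsymbol{Z}=\boldsymbol{q}+\boldsymbol{q'}$ and $\boldsymbol{Z'}=\boldsymbol{q}+\beta\boldsymbol{q'}$, normalizing the lift of $q$ so that $q$ lies in the affine chart. Here $\beta$ is a function of $(q,Z)$ determined by the affine midpoint condition. Harmonic conjugation is the case $\beta=-1$. A short computation in an affine coordinate $s$ along the line should show that $\beta=-1$ exactly when $q'\in\Pi_\infty$, and that otherwise $\beta$ depends genuinely on the position of $Z$ along the line.

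Redoing the derivative computation with $\beta$ now varying gives
\[
\omega(\boldsymbol{Z'},\dot{\boldsymbol{Z'}})=\omega(\boldsymbol{Z},\dot{\boldsymbol{Z}})+(\beta^2-1)\,\omega(\boldsymbol{q'},\dot{\boldsymbol{q'}}),
\]
plus possible terms involving $\dot\beta$. These extra terms have the form $\dot\beta\,\omega(\boldsymbol{q},\boldsymbol{q'})$ and therefore vanish by isotropy of the characteristic line. Contactness requires $(\beta^2-1)\,\omega(\boldsymbol{q'},\dot{\boldsymbol{q'}})=0$ for all variations; I would take variations in which $Z$ moves along the line with $q$ fixed, and variations in which $q$ moves on $\Sigma$. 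Two cases arise.
\begin{enumerate}
\item If $\omega(\boldsymbol{q'},\dot{\boldsymbol{q'}})=0$ for every variation of $q$ on an open set, then $\Sigma'$ is Legendrian-like, i.e.\ tangent to $\xi$. This forces $\dim\Sigma'<\dim\Sigma$. I expect to exclude this degenerate case, or to show it still reduces to the cone case; this is where I anticipate the main obstacle, and I would handle it by the density argument below.
\item Otherwise $\beta^2=1$ on an open dense set, so $\beta=-1$ there and $q'\in\Pi_\infty$. Then $\Sigma'\subset\Pi_\infty=\xi_{q_\infty}$, so by polarity $q_\infty\in T_q\Sigma$ for every $q$.
\end{enumerate}
A hypersurface all of whose tangent hyperplanes pass through a fixed point is a (generalized) cone with that vertex: the radial vector field from $q_\infty$ is tangent to $\Sigma$, so $\Sigma$ is ruled by lines through $q_\infty$.

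To conclude, I would use continuity to pass from the open dense set to all of $\Sigma$. The condition that $\omega(\boldsymbol{q'},\dot{\boldsymbol{q'}})$ vanish identically also deserves care: it says the polar hypersurface is everywhere tangent to $\xi$. Since $\xi$ is maximally non-integrable, the polar cannot be a hypersurface on any open set, and it is again the cone case that makes the polar degenerate to a set contained in $\Pi_\infty$.
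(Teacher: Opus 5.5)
Your (3)$\Rightarrow$(2) is the paper's polarity argument, and (2)$\Rightarrow$(1) via Theorem~\ref{thm:ContTransf} is fine. The paper gets (1)$\iff$(2) simply by citing Corollary~\ref{corr:Uniqueness}. You correctly notice that, for a fixed table, the midpoint rule has a point-dependent $\beta$: if $\ell(\boldsymbol q)=1$ defines the chart, then $\beta=-1/(1+2\ell(\boldsymbol{q'}))$. Your check that the $\dot\beta$ terms drop out is also correct.

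\textbf{The gap.} It is the next step. A contact transformation does \emph{not} require $(\beta^2-1)\,\omega(\boldsymbol{q'},\dot{\boldsymbol{q'}})=0$ for all variations. It only requires $\omega(\boldsymbol{Z'},d\boldsymbol{Z'})=f\,\omega(\boldsymbol{Z},d\boldsymbol{Z})$ for some nonvanishing $f$, i.e.\ the difference must vanish only on variations tangent to $\xi_Z$.

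To see what this gives, write $\boldsymbol Z=\boldsymbol q+s\boldsymbol p$ with $\boldsymbol p$ a fixed lift of $q'$, and set $\alpha_1=\omega(\boldsymbol q,d\boldsymbol q)$ and $\alpha_2=\omega(\boldsymbol p,d\boldsymbol p)$ on $\Sigma$. On the space of pairs $(q,s)$ the two forms are $\alpha_1+s^2\alpha_2$ and $\alpha_1+\beta^2s^2\alpha_2$. Proportionality forces $\beta^2=1$ only where $\alpha_1\wedge\alpha_2\neq0$.

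So the real degenerate case is $\alpha_2\in\mathrm{span}(\alpha_1)$, not merely $\alpha_2=0$, and your dichotomy misses it. On the self-dual Clifford torus $\Sigma_1$, $X^2$ is a negative multiple of the identity. Hence $\alpha_2$ kills the characteristic direction $X\boldsymbol q$ and is a nonzero multiple of $\alpha_1$, and the kernel condition holds for every $\beta$.

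Your treatment of the case $\alpha_2\equiv 0$ is also incorrect. Take the tangent developable of a Legendrian curve $\gamma$. Its tangent planes are the osculating planes $\xi_{\gamma(t)}$, so its polar is $\gamma$ and $\alpha_2\equiv0$. Yet it is not a cone with vertex $q_\infty$, and in general $\gamma\not\subset\Pi_\infty$. Conversely, the genuine cone case lands in your case 2, not case 1, because there $\Sigma'\subset\Pi_\infty$ is generally not tangent to $\xi$.

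\textbf{The fix.} The missing ingredient is the local-diffeomorphism hypothesis implicit in (1). Take $\theta=\alpha_1+s^2\alpha_2$ on the $(2n-1)$-dimensional space of pairs $(q,s)$. Every $ds$-free term of $\theta\wedge(d\theta)^{n-1}$ is a $(2n-1)$-form built from forms on the $(2n-2)$-dimensional $\Sigma$, so it vanishes. What remains is $-2(n-1)s\,ds\wedge\alpha_1\wedge\alpha_2\wedge(d\alpha_1+s^2d\alpha_2)^{n-2}$; in $\mathbb{RP}^3$ this is just $\theta\wedge d\theta=-2s\,ds\wedge\alpha_1\wedge\alpha_2$.

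Hence, if $(q,s)\mapsto Z$ is a local diffeomorphism, $\theta$ is a contact form and $\alpha_1\wedge\alpha_2\neq 0$. Then $\alpha_1+\beta^2s^2\alpha_2=f(\alpha_1+s^2\alpha_2)$ gives $f=1$ and $\beta^2=1$. Since $\beta=1$ would put $Z$ in $\Pi_\infty$, we get $\beta=-1$, and therefore $q'\in\Pi_\infty$.

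This disposes of both degenerate situations at once: in each of them the characteristic lines fail to fill an open set. After that, your polarity, cone and continuity steps go through.
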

\begin{proof}
The equivalence of $(1) \iff (2)$ follows from Corollary \ref{corr:Uniqueness}. 

To see the equivalence $(2)\iff (3)$, we note that the midpoint reflection rule is
precisely harmonic conjugation with respect to a point at infinity. Therefore, the 
midpoint rule coincides with that of Definition~\ref{def:OutContBill} if and only if, for 
every $q\in \Sigma$, the contact point $q'=(T_q\Sigma)^\omega$ lies on the hyperplane at 
infinity $\Pi_\infty$, or equivalently, $(T_q\Sigma)^\omega\subset \Pi_\infty$. By taking 
symplectic complements we have $T_q\Sigma \supset (\Pi_\infty)^\omega = q_\infty$ so that 
such planes are exactly those belonging to the pencil of hyperplanes incident to 
$q_\infty$.
Hence, every tangent hyperplane to $\Sigma$ passes through the fixed point $q_\infty$. 
These are precisely the generalized cones in item (3), see Figure \ref{fig:affineCone}.

\begin{figure}
    \centering
    \includegraphics[width=0.5\linewidth]{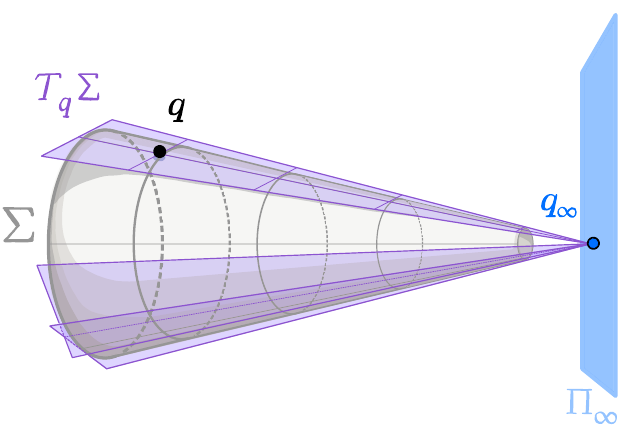}
    \caption{Generalized cone with vertex at infinity.}
    \label{fig:affineCone}
\end{figure}

\end{proof}

\subsection{Hopf tables}
\label{sec:Hopf}

It is also standard when studying the projective space to pass to a double cover, by 
considering oriented lines (rays), and making some choice of inner product in order to 
work then on an appropriate unit sphere. Similarly to our affine chart of the last 
section, after such choices there now appears a simpler reflection rule, by using the 
inner product to reflect with equal angles along characteristic lines.
\begin{figure}[h]
\centering
\begin{tikzpicture}  
    
    \draw[red,thick] (-3,0) -- (3,0); 
    
    \draw[blue,thick] (-1,1) -- (1, -1); 
    \draw[blue,thick] (-1,-1) -- (1, 1); 

    \filldraw[black] (0,0) circle (2pt);

    \draw (-3,1.5) -- (-3,-1.5) -- (3,-1.5) -- (3,1.5) -- (-3,1.5);

    \node at (2.8,-.3) {$q$};
    \node at (1.3,-1) {$Z$};
    \node at (1.3,1) {$Z'$};


    \draw (.5,0) arc (0:45:.5);
    \draw (.5,0) arc (0:-45:.5);

    \draw (0.28,0.11) -- (0.65,0.27);
    \draw (0.28,-0.11) -- (0.65,-0.27);
\end{tikzpicture}

    \caption{The angle reflection rule with respect to an inner product: $\angle(q,Z) = \angle(q,Z')$.}
    \label{fig:AngRef}
\end{figure}

Typically our outer contact billiards reflection and this angle reflection rule will 
{\em not} coincide, however we may ask again: for which tables do these two rules 
coincide? When could this more familiar equal angles reflection rule generate contact 
transformations?

As it turns out such tables must be of the type considered in our example from \S \ref{sec:ClTori} above:

\begin{thm}\label{Thm:CplxTable}
    Let $J:V\to V$ be some $\omega$-compatible complex structure, $J^*\omega = \omega$,
    with associated inner product $g_J(u,v)= \omega(u,Jv)$. Then the following are 
    equivalent:
    \begin{enumerate}
        \item the angle reflection rule over a table $\Sigma\subset\mathbb{P}V$ with 
        respect to the inner product $g_J$ generates a contact transformation of 
        $(\mathbb{P}V,\xi)$,
        \item the angle reflection rule over $\Sigma$ with respect to $g_J$ coincides with 
        the outer contact billiards reflection rule over $\Sigma$ (Definition 
        \ref{def:OutContBill}),
        \item $\Sigma$ is a {\em Hopf table}, that is to say, $\Sigma = \pi_J^{-1}(\Gamma)$ 
        for some hypersurface $\Gamma\subset \mathbb{CP}^{n-1}$ and $\pi_J:V\to \mathbb{P}_\C V \cong \mathbb{CP}^{n-1}$ the Hopf map associated to the complex
        structure $J$ on $V\cong \C^n$.
    \end{enumerate}
\end{thm}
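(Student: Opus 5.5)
Following the proof of Theorem~\ref{thm:AffTable}, we split into $(1)\iff(2)$ and $(2)\iff(3)$.

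For $(1)\iff(2)$, we observe that the angle reflection rule is again a reflection rule along characteristic lines of the type treated in Corollary~\ref{corr:Uniqueness}. Indeed, on the unit sphere of $g_J$, the rule fixes the characteristic great circle through $q$ and $q'$ and sends $Z=\cos t\,\boldsymbol{\hat q}+\sin t\,\boldsymbol{n}$ to $Z'=\cos t\,\boldsymbol{\hat q}-\sin t\,\boldsymbol{n}$. Here $\boldsymbol{n}$ is the unit vector in $\text{span}\{\boldsymbol q,\boldsymbol q'\}$ that is $g_J$-orthogonal to $\boldsymbol q$. Writing $\boldsymbol n$ in terms of $\boldsymbol q,\boldsymbol q'$ puts this rule in the form $\boldsymbol{Z'}=\boldsymbol q+\beta\boldsymbol{q'}$ used in the corollary. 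Corollary~\ref{corr:Uniqueness} then gives that the rule generates contact transformations exactly when it agrees with harmonic conjugation. Strictly, the corollary is phrased for a rule applied to all tables, so we will reuse its computation pointwise on the fixed table: $\omega(\boldsymbol{Z'},\dot{\boldsymbol{Z'}})-\omega(\boldsymbol Z,\dot{\boldsymbol Z})=(\beta^2-1)\omega(\boldsymbol{q'},\dot{\boldsymbol{q'}})$. We must check that $\omega(\boldsymbol{q'},\dot{\boldsymbol{q'}})$ cannot vanish identically along the contact-preserving directions unless $\beta=\pm1$. This small genericity point needs care; if it fails, we will state $(1)\Rightarrow(2)$ modulo tables where the polar surface $\Sigma'$ is Legendrian in the relevant sense.

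For $(2)\iff(3)$, the angle rule is harmonic conjugation with respect to $q$ and the point $\boldsymbol n$, which is the $g_J$-orthogonal point to $q$ on the characteristic line. Harmonic conjugates of $Z,Z'$ symmetric about $\boldsymbol{\hat q}$ are exactly $\hat q$ and its orthogonal. So (2) holds iff $g_J(\boldsymbol q,\boldsymbol q')=0$ for every $q\in\Sigma$. Now $g_J(\boldsymbol q,\boldsymbol q')=\omega(\boldsymbol q,J\boldsymbol q')$. Write $\Sigma=\{f=0\}$ with $f$ homogeneous, and use Proposition~\ref{prop:SGrad}: $\boldsymbol q'=X_f(\boldsymbol q)$. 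Since $J$ is $\omega$-compatible, $J X_f=\pm\nabla_{g_J} f$ up to sign conventions. Hence
\[ g_J(\boldsymbol q,\boldsymbol q')=\pm\,\omega(\boldsymbol q,J X_f)=\pm\, df_{\boldsymbol q}(J\boldsymbol q), \]
up to sign. Thus (2) is equivalent to $df(J\boldsymbol q)=0$ on the cone over $\Sigma$, that is, to the cone being tangent to the vector field $\boldsymbol q\mapsto J\boldsymbol q$. It is already tangent to the radial field $\boldsymbol q$. Invariance under both the real scaling and the $J$-rotation flow $e^{sJ}$ means invariance under $\C^*$, i.e.\ $\Sigma=\pi_J^{-1}(\Gamma)$. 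Conversely, a Hopf table has $J\boldsymbol q\in T_{\boldsymbol q}(\text{cone})$, so the orthogonality holds.

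The main obstacle is the bookkeeping in the first step. We must express the angle rule as $\boldsymbol q+\beta\boldsymbol{q'}$ with correct normalization of lifts, since the unit-sphere normalization mixes $\boldsymbol q$ and $\boldsymbol q'$ when they are not orthogonal, and then verify the genericity needed to invoke Corollary~\ref{corr:Uniqueness} on a single table. The identity $g_J(\boldsymbol q,X_f)=\pm df(J\boldsymbol q)$ is routine. As a sanity check, the Clifford tori of \S\ref{sec:ClTori}, property 2, satisfy it.
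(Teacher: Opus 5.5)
Your proposal follows the paper's proof: $(1)\iff(2)$ via the computation behind Corollary~\ref{corr:Uniqueness}, and $(2)\iff(3)$ by observing that the angle rule is the involution of the characteristic line fixing $q$ and its $g_J$-orthogonal point, so (2) amounts to $g_J(\boldsymbol q,\boldsymbol{q'})=0$, i.e.\ $J\boldsymbol q\in T_q\Sigma$ (your $df_{\boldsymbol q}(J\boldsymbol q)=0$ is the paper's $\omega(\boldsymbol u,J\boldsymbol q)=0\iff J\boldsymbol q\in\Pi$), which is invariance under the Hopf circle action. The pointwise caveat you flag, which the paper passes over, never actually arises: if $\omega(\boldsymbol{q'},d\boldsymbol{q'})$ vanished on $T_q\Sigma\cap\xi_q$, then $d\boldsymbol q$ and $d\boldsymbol{q'}$ would both map this $(2n-3)$-dimensional space into $\{\boldsymbol q,\boldsymbol{q'}\}^\omega$, so together with the direction along the characteristic line a $(2n-2)$-dimensional space would map into the $(2n-3)$-dimensional $\{\boldsymbol q,\boldsymbol{q'}\}^\omega/\boldsymbol Z$, and the correspondence could not be a local diffeomorphism at $Z$; hence $\beta^2=1$ is forced wherever (1) is meaningful.
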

\begin{proof}
    The equivalence of $(1) \iff (2)$ is Corollary \ref{corr:Uniqueness}. \\
    To see the equivalence between (2) and (3), let $q\in \mathbb{P}V$ be a line through 
    the origin, and $\Pi\in \mathbb{P}V^*$ a hyperplane containing $q$ so that
    \[ \Pi \supset q\implies \xi_q = q^\omega \supset \Pi^\omega \]
    and, when $\Pi\ne q^\omega$, the characteristic line of $\Pi\ni q$ is the 2-plane 
    spanned by $q$ and $\Pi^\omega$. The equal angles reflection rule with respect to 
    $g_J$ will coincide with the outer contact billiard reflection rule over $\Pi$ if and 
    only if $q$ and $\Pi^\omega$ are $g_J$ orthogonal subspaces (see Figure 
    \ref{fig:ContactSpherical}).
    
    \begin{figure}[h]
    \centering
    \includegraphics[width=0.4\linewidth]{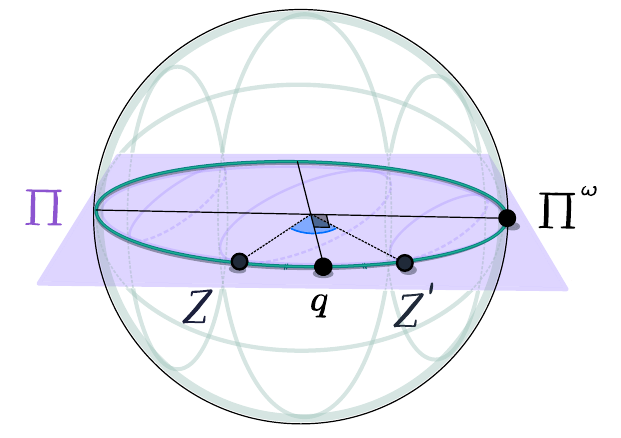}
    \caption{Harmonic conjugates and equal angles coincide when the reflecting point 
    $q\in \Pi$ and $\Pi^\omega$ are perpendicular.}
    \label{fig:ContactSpherical}
\end{figure}

    That is, for $\Pi^\omega = \text{span}\{ u\}$, we have:
    \[ 0 = g(u,q) = \omega(u,Jq) \iff Jq\in (\Pi^\omega)^\omega = \Pi. \]
    Applied to the tangent planes $\Pi= T_q\Sigma$ of some table $\Sigma$, we see that the 
    equal angles reflection rule over the table coincides with the outer contact billiard 
    reflection rule if and only if $Jq\in T_q\Sigma$. Since $Jq$ is the infinitesimal 
    generator of the $S^1$-action $q\mapsto e^{it}q$, the condition $Jq\in T_q\Sigma$ 
    means that $\Sigma$ is invariant under this action. Hence $\Sigma$ is a union of 
    fibers of the associated Hopf fibration.
\end{proof}

\begin{rmk}
    In this last result, it is not necessary that the associated inner product $g_J$ be definite, so that midpoint reflection rules in hyperbolic spaces can also be included.
\end{rmk}

\section{Open questions}
\label{sec:questions}

The theory developed here leaves many further questions to be explored. Some concern the 
geometric foundations of outer contact billiards, while others point towards possible 
extensions and connections with classical billiard theory. We collect a few of these 
problems below.

\begin{enumerate}
    \item \textbf{Local contactomorphisms as contact billiards.} Theorem 
    \ref{thm:ContTransf} shows that when the outer contact billiard correspondence is a 
    local diffeomorphism, the map preserves the contact structure. Can every local 
    contactomorphism be realized as an outer contact billiard map?
    
    \item \textbf{Inner contact billiards.} Throughout this paper, we focus on \emph{outer} 
    billiards. Is there a natural \emph{inner} contact billiard? What kind of structure 
    should these preserve? 

    For instance, let us consider $(\mathbb{RP}^3, \xi)$ where it is possible to cook up 
    various candidate reflection laws for incident lines to a given surface. The reflection 
    law off of said surface then induces a certain local transformation of $\text{Gr}(2,\mathbb{R}^4)$. What is an appropriate analogue of Theorem \ref{thm:ContTransf} to 
    decide which reflection law shall earn the name of inner contact billiards?

    For example, a candidate reflection law in $(\mathbb{RP}^3, \xi)$ can be defined 
    (using only the projective and contact structure) in the following way:
    \begin{itemize}
        \item Given a line $\ell$ incident to a hyperplane $\Pi$ at $q\in \Pi \backslash \{ q'\}$ (for $q' = \Pi^\omega$ the contact point of $\Pi$).
        \item Let $n$ be the contact point of the plane through $q'$ and $\ell$, and 
        $\nu = q\wedge n$ the line through $q$ and $n$.
        \item Then consider $\ell'$ to be the reflection of $\ell$ off of $\Pi$ when: 
            \begin{itemize}
                \item[(a)] $\ell'$ lies in the pencil of lines generated by $\ell, \nu$, and 
                \item[(b)] $\ell, \ell'$ are harmonic conjugates with respect to $q\wedge q', \nu$. 
            \end{itemize}
    \end{itemize}
    See Figure \ref{fig:inner-contact}.

    \begin{figure}
        \centering
        \includegraphics[width=0.45\linewidth]{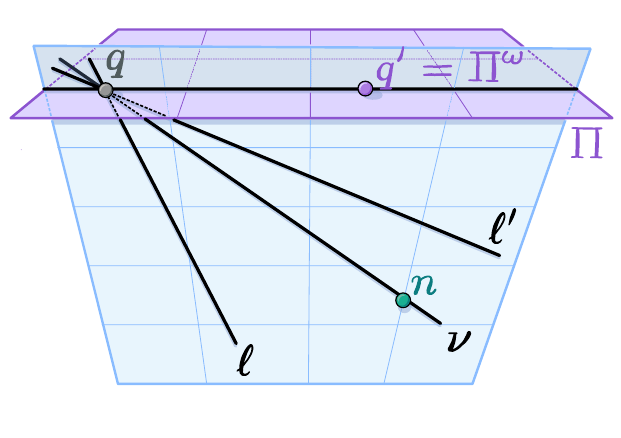}
        \caption{The proposed reflection law defined using only projective and contact structure. Here the line $\nu$ plays the same role as the transversal line field in projective billiards (see \cite{tabachnikovProjective}). However, $\nu$ depends not only on the reflecting plane $\Pi$ and incidence point $q$, but also on the incident line $\ell$ (this normal remaining fixed among incident lines in the pencil through $\ell,q\wedge q'$). }
        \label{fig:inner-contact}
    \end{figure}
    
    \item \textbf{Variational principle.} The outer contact billiard construction is 
    defined simultaneously from a contact and a projective structure. Is there a natural 
    variational principle associated with this geometry? More specifically, can outer 
    contact billiard trajectories be characterized as critical points of a suitable action 
    functional in a similar way that Herglotz' variational principle characterizes 
    trajectories of contact vector fields? See for example \cite{DiscHerg}, where a 
    discrete Herglotz variational principle (generating contact transformations) has been 
    developed.
    
    \item \textbf{Projective duality.} Is there a natural dual construction for outer 
    contact billiards under projective duality? 
    
    \item \textbf{Higher-dimensional quadrics.} Our explicit description of outer contact 
    billiards over quadratic tables was restricted to $\mathbb{RP}^3$. Is the outer 
    contact billiard map over quadratic surfaces still integrable in higher dimensions? 
    See Remark \ref{rmk:quadrics-higher-dim} in the Appendix below.

    \item \textbf{Higher degree tables.} The contact billiards over quadratic tables in 
    $(\mathbb{RP}^3, \xi)$ has been described here completely. What can one say about 
    higher degree tables? 
    
    Some comments:
\begin{itemize}
    \item The characteristic lines along a general surface in $\mathbb{RP}^3$ form a 2-parameter family of lines, or {\em congruence} of lines, in $\mathbb{RP}^3$ (see \cite{DarbouxII}: Book IV).
    
    \item For quadratic surfaces in $\mathbb{RP}^3$, their outer contact billiards induce 
    a bona fide dynamics: through each point there pass at most {\em two} characteristic 
    lines (one a `future' and the other a `past'). For higher degree tables, this need not 
    be so. Indeed, for a degree $d$ surface in $\mathbb{RP}^3$ typically there will pass 
    $d(d-1)$ characteristic lines through a given point (the congruence of characteristic 
    lines has {\em order} $d(d-1)$ in the terminology of \cite{DarbouxII}), and one will need to consider a type of `branched dynamics' as we have seen in remark \ref{rmk:quadrics-higher-dim}.

    \item A given congruence of lines generally envelope a certain focal surface, which 
    may be composed of various branches. A given line may touch the focal surface at 
    multiple points (each point along some branch). Apart from the table and its polar, 
    what sorts of focal surfaces are associated with a congruence of characteristic lines 
    along an algebraic table in $(\mathbb{RP}^3,\xi)$? What sorts of surface 
    transformations and sequences of conjugate nets (see \S 5 of \cite{ChernSurfTry}) can 
    arise from such congruences composed of characteristic lines?
    
\end{itemize}

    \item \textbf{Higher-codimension tables.} The outer contact billiard construction 
    admits a natural extension beyond tables that are hypersurfaces (codimension one). For 
    a submanifold of codimension greater than one, each tangent space determines a family 
    of characteristic lines, rather than a single one, and we have still an induced outer 
    contact billiards correspondence. What new phenomena emerge in this higher-
    codimensional setting?
    
    \item \textbf{Singular points.} Singular points arise in many classes of projective 
    tables, yet their role in the dynamics requires further investigation (see \S 
    \ref{sec:Sphere}). 
    What sort of behaviors can appear around these singular points? When is it possible to 
    regularize the correspondence through such singularities? 
    
    \item \textbf{Relations to dissipative billiards.} Contact geometry provides a broad 
    framework for dissipative dynamics, and so the outer contact billiards here can be 
    considered as a certain type of dissipative billiard (we see this dissipation in 
    several of our examples already, e.g.~the attracting and repelling points in Example 
    \ref{sec:Sphere}). 
    
    Dissipative billiards are usually considered as certain conformally symplectic 
    transformations in their corresponding phase space (see \cite{pinball}, 
    \cite{dissipative-billiards}, and \cite{diss-symp}). Conformal symplectic 
    transformations are special cases of contact transformations, lifting, locally at 
    least, to contact transformations of an appropriate contactification. Thus one can 
    imagine that these usual dissipative billiards should correspond to an appropriate 
    projection of contact billiards. Can contact billiards help understand these usual 
    dissipative billiards?

    Separately, one can mimic the definition of dissipative symplectic billiards in the context of contact billiards. Instead of asking for the cross ratio in Definition \ref{def:OutContBill} to be $-1$, one can replace it with a different fixed value $\beta \neq -1$. By Corollary \ref{corr:Uniqueness}, we know such billiards do not preserve the contact structure. Is there any preserved structure?
    
    
    \item \textbf{Classical billiard conjectures.} In the case of Birkhoff (Euclidean) 
    billiards, there are two ``holy grails'': the Ivrii and Birkhoff-Poritsky conjectures. 
    One can also pose the analogs of these conjectures in this new setting:
    \begin{itemize}
        \item (\textit{Ivrii} \cite{ivrii}) Does the set of periodic contact billiard 
        orbits have measure zero?
        \item (\textit{Birkhoff-Poritsky} \cite{birkhoff-poritsky}) Given a hypersurface 
        $\Sigma$ such that the outer contact billiard correspondence is integrable, can we 
        ensure $\Sigma$ is a quadratic table? 
    \end{itemize}
      
    \item \textbf{Periodic Clifford tori.} We showed that the dynamics on Clifford tori, Section \ref{sec:ClTori}, are completely determined by two rotation frequencies satisfying an explicit relation (Equation \eqref{eq:KeplerProblem}). 
    For a given Clifford torus, which periods can actually occur? 
\end{enumerate}

\begin{appendix}
    \section{Quadratic tables}\label{sec:Quads}

Our examples above, \S \ref{sec:Exs}, concern quadratic tables (in $(\mathbb{RP}^3,\xi)$). The space of quadratic forms on a symplectic vector space is well studied (see \cite{WillNormForms}, or \S 2.4 of \cite{EDSIV}). For completeness, we recall here that modulo symplectic changes of basis there is a 2-parameter family of quadratic forms on $(\R^4,\omega)$, in correspondence with the adjoint orbits of $\mathfrak{sp}_4(\R)$. Consequently, modulo re-scalings, we have a 1-parameter family of quadratic tables in $(\mathbb{RP}^3,\xi)$. Explicitly:
\begin{prop}[Special case of \cite{WillNormForms}]\label{prop:QuadRP3}
 A non-trivial (i.e.~non-empty, and non-planar) quadratic surface in $(\mathbb{RP}^3,\xi)$ is given, in an appropriate symplectic basis $(x,y,u,v)\in \R^4$ with $\omega = dx\wedge dy + du\wedge dv$ by one of the following normal forms:
    \begin{enumerate}
        \item $\{ x^2 + y^2  = v^2\}, \{ xy = v^2\}, ~\{ vy = x^2\}$,
        \item $\{ cxy = uv\}$
        \item $\{ c(x^2 + y^2) = u^2 + v^2\}$,
        \item $\{ c(x^2 + y^2) + u^2 = v^2\}$,
        \item $\{ x(cy + v) = u(y-cv)\}$,
        \item $\{ xy + uv = uy \}$,
        \item $\{  v(v+x) = y(u-y) \}$
    \end{enumerate}
    for $c\in \R\backslash 0$ a parameter.
\end{prop}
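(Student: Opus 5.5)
This statement is a special case of Williamson's classification of quadratic forms on a symplectic vector space. The plan is to obtain it from that classification rather than prove it from scratch.

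\textbf{Step 1: pass to the Hamiltonian matrix.} A quadratic form $Q$ on $(\R^4,\omega)$ corresponds, via $Q(\boldsymbol{x})=\tfrac12\omega(\boldsymbol{x},A\boldsymbol{x})$, to an element $A\in\mathfrak{sp}_4(\R)$. This is the same as saying $X_Q$ is linear, as in Proposition~\ref{prop:SGrad}. Symplectic changes of basis act on $Q$ by conjugating $A$. So classifying tables $\{Q=0\}$ up to $\text{PSp}_4(\R)$ amounts to classifying adjoint orbits of $\mathfrak{sp}_4(\R)$ up to the extra scaling $A\mapsto \lambda A$, $\lambda\in\R\setminus 0$.

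\textbf{Step 2: list the adjoint orbits.} The eigenvalues of $A$ come in quadruples $\pm\mu,\pm\bar\mu$. I will run through the standard cases of \cite{WillNormForms}:
\begin{itemize}
\item a semisimple real pair $\pm a,\pm b$ (hyperbolic--hyperbolic);
\item $\pm a,\pm ib$ (hyperbolic--elliptic);
\item $\pm ia,\pm ib$ (elliptic--elliptic, with the sign of $Q$ on each factor);
\item a complex quadruple $\pm a\pm ib$ (focus--focus);
\item the non-semisimple cases: Jordan blocks over a real or imaginary double eigenvalue, and nilpotent orbits;
\item degenerate $Q$, i.e.\ with kernel.
\end{itemize}
For each case I write Williamson's normal form in the coordinates $(x,y,u,v)$ with $\omega=dx\wedge dy+du\wedge dv$.

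\textbf{Step 3: discard trivial orbits and match the rest to the list.}
\begin{itemize}
\item Definite $Q$ gives the empty set. Rank-one $Q$ gives a (double) plane. Both are excluded.
\item Dividing by one eigenvalue leaves the ratio as the single parameter $c$.
\item Hyperbolic--hyperbolic gives $cxy=uv$ (item 2).
\item Elliptic--elliptic with opposite signs gives $c(x^2+y^2)=u^2+v^2$ (item 3).
\item Elliptic--hyperbolic gives item 4, after rewriting $2uv$ as $u^2-v^2$ by a symplectic rotation.
\item Focus--focus gives item 5. Check: the form $x(cy+v)-u(y-cv)$ has Hamiltonian matrix with eigenvalues $\pm c\pm i$.
\item The nilpotent and non-semisimple imaginary cases give items 6 and 7.
\item The degenerate ranks 2 and 3 (kernel nonzero) give the cylinder and cone normal forms of item 1, according to the signature on a symplectic/isotropic splitting of the kernel.
\end{itemize}
For each matching I would compute the characteristic polynomial of the listed form's Hamiltonian matrix. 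Two elements of $\mathfrak{sp}_4$ with the same Jordan data and the same Williamson sign invariants are conjugate, which gives the identification.

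\textbf{Main obstacle.} The hard part is the bookkeeping in the non-semisimple and degenerate cases. There I must check two things:
\begin{itemize}
\item the sign invariants of Jordan blocks of $\pm ib$ type (and the nilpotent orbits) collapse, after allowing $Q\mapsto -Q$ and discarding empty surfaces, to exactly the forms in items 6, 7 and item 1;
\item no listed forms are redundant or missing.
\end{itemize}
I would handle this by computing, for each nilpotent/Jordan representative, its kernel rank, its Jordan type, and the sign of $Q$ on the relevant generalized eigenspace, and comparing these invariants with Williamson's tables.
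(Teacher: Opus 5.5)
Your overall route is the paper's: pass to $X=\omega^{-1}S\in\mathfrak{sp}_4(\R)$, use Williamson's classification of adjoint orbits, discard forms whose zero set is empty or planar, and rescale to leave one parameter. Your identifications of items 2--5 are correct; item 5 does have eigenvalues $\pm c\pm i$.

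The problem is in Step 3, in exactly the bookkeeping you flag as the main obstacle: item 6 is not a nilpotent orbit. For $Q=xy+uv-uy$ one gets $X_Q=(u-x,\ y,\ -u,\ v-y)$. This splits as $\begin{pmatrix}-1&1\\0&-1\end{pmatrix}$ on the $(x,u)$-plane and $\begin{pmatrix}1&0\\-1&1\end{pmatrix}$ on the $(y,v)$-plane. So item 6 is the non-semisimple \emph{real} orbit: eigenvalues $\pm1$, each carrying a $2\times2$ Jordan block, exactly as in the paper's list. In fact no nilpotent orbit can give a nondegenerate table at all: $X$ nilpotent forces $\det X=0$, hence $\det S=0$. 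As written, the real Jordan-block orbit you listed in Step 2 is never matched in Step 3, and the nilpotent orbits are filed in the wrong place. Your ``nothing missing, nothing redundant'' check would therefore fail.

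The nilpotent orbits belong with the degenerate forms, and your description of item 1 needs adjusting accordingly. A rank-2 form cuts out a projective line or a pair of planes, so it is always trivial. All three forms in item 1 have rank 3, with a one-dimensional (hence automatically isotropic) kernel spanned by $\partial_u$. So there is no splitting of the kernel to distinguish them. What distinguishes them is the remaining Jordan data of $X_Q$, together with indefiniteness of $Q$ (a semidefinite rank-3 form only cuts out a point):
\begin{itemize}
\item $x^2+y^2=v^2$ has eigenvalues $\pm2i$ plus a $2\times2$ nilpotent block;
\item $xy=v^2$ has eigenvalues $\pm1$ plus a $2\times2$ nilpotent block;
\item $vy=x^2$ has $X_Q=(-v,-2x,-y,0)$, a single $4\times4$ nilpotent Jordan block.
\end{itemize}
The last form is where the regular nilpotent orbit lives; the other nilpotent types have rank at most $2$.

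With these reassignments, your comparison of invariants does reproduce the paper's list. Your observation that $Q\mapsto -Q$ merges the two sign classes of the imaginary Jordan-block orbit into item 7 is correct and remains part of that.
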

\begin{proof} We summarize a derivation. Consider our non-degenerate symplectic form as an invertible skew-linear map $\omega:\R^4\to \R^{4*}$. Given a quadratic form, $Q:\R^4\to \R$, let $S:\R^4\to \R^{4*}$ be the (symmetric) map for $Q$'s underlying inner product: $Q({\bf v}) = (S{\bf v},{\bf v})$. Then:
\[ X = \omega^{-1} S \in \mathfrak{sp}_4(\R)  \]
(the linear vector field ${\bf v}\mapsto X{\bf v}$ is, up to constant multiples, the symplectic gradient of the quadratic Hamiltonian ${\bf v}\mapsto (S{\bf v},{\bf v})$). Classifying quadratic forms up to symplectic changes of basis: $S\mapsto A^*SA$ corresponds to classifying $X\in \mathfrak{sp}_4(\R)$ up to adjoint action, $X\mapsto A^{-1}X A$, for $A\in \text{Sp}_4(\R)$: $A^*\omega A = \omega$. These adjoint orbits are distinguished by the various types of eigenspaces of $X\in \mathfrak{sp}_4(\R)$ (recall: if $\lambda$ is an eigenvalue of $X\in \mathfrak{sp}_4(\R)$, so too is $-\lambda, \bar\lambda, -\bar\lambda$). The items above correspond to the following cases 1: degenerate quadratic forms (some zero eigenvalues); 2: simple real eigenvalues ($a,-a,b,-b$); 3: simple pure imaginary eigenvalues ($\pm ia, \pm ib$); 4: simple real and pure imaginary eigenvalues ($a,-a, \pm ib$); 5: general complex eigenvalue ($a\pm ib, -a\pm ib$); 6: repeated real eigenvalue with a Jordan block; 7: repeated pure imaginary eigenvalue with a Jordan block. Finally, we use the re-scaling freedom to normalize some (non-zero) eigenvalues to one.
\end{proof}

\begin{rmk}
    For a vector space $V$ we denote the natural pairing between $\nu\in V^*, v\in V$ by $(\nu,v)$.
\end{rmk}

\begin{rmk}
    In symplectic coordinates upstairs, $\omega = dx\wedge dy + du\wedge dv$, the induced contact structure is given by (see e.g.~\cite{ContRed}) the kernel of the restriction of 
    \[ \lambda = \iota_{x\partial_x + y\partial_y + u\partial_u + v\partial_v }\omega = -ydx + xdy - vdu + udv \]
    to any surface transversal to the radial direction, e.g.~restriction of $\lambda$ to an affine hyperplane.
\end{rmk}

\begin{rmk}
    The cases with degenerate quadratic forms (item 1) are considered in example \ref{sec:Cyl} above. Item 2 in the list corresponds to the quadratic table described in Proposition \ref{prop:HypTori}.
    Item 3 corresponds to the Clifford tori exposed in example \ref{sec:ClTori}. 
    Item 4 are the ellipsoids, considered in example \ref{sec:Sphere}. Item 5 corresponds to the quadratic case discussed in Proposition \ref{prop:LoxTori}. Item 6 is represented in Proposition \ref{prop:ParTori}.
    Finally, item 7 of the list is precisely the case exposed in example \ref{sec:TwistTori}.
\end{rmk}

Continuing with the notation from the last proof, we compute the following description of outer contact billiards over non-degenerate quadratic surfaces:
\begin{prop}\label{prop:QuadR4}
    Let $\Sigma = \{ {\bf v} : (S{\bf v},{\bf v}) = 0\}\subset\mathbb{RP}^3$ be a non-degenerate quadratic surface with generator
    \[ X = \omega^{-1}S \in \mathfrak{sp}_4(\R).\]
    Then the polar surface of $\Sigma$ with respect to $\omega$ is the quadratic surface $\Sigma' = \{ {\bf v} : (S'{\bf v},{\bf v}) = 0\}\subset\mathbb{RP}^3$ for $X^*S'X = S$, or:
    \[ S' = -\omega S^{-1}\omega.\]
    The domain of the outer contact billiards map over $\Sigma$ (or $\Sigma'$) is the region foliated by the quadratics
    \[ I_{\sigma^2} = \{ {\bf v} : (S{\bf v},{\bf v}) = \sigma^2\det X ~(S'{\bf v},{\bf v}) \}, ~~~\sigma\in (0,\infty) \]
    from the pencil of quadratics through $\Sigma, \Sigma'$. Each quadratic $I_{\sigma^2}$ is invariant under the outer contact billiards map over $\Sigma$ (or $\Sigma'$), explicitly by iterating
    \[ I_{\sigma^2} \ni \text{span}\{ \boldsymbol{Z}\}\mapsto \text{span}\{ (id-\sigma X)(id+\sigma X)^{-1}\boldsymbol{Z} \} \in I_{\sigma^2} \]
    as long as $1/\sigma $ is not an eigenvalue of $X$.
\end{prop}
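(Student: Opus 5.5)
Set $X=\omega^{-1}S$ with the convention $\omega(\mathbf u,\mathbf w)=(\omega\mathbf u,\mathbf w)$, so $\omega^*=-\omega$, $S^*=S$, and $\omega X=S$. The tangent hyperplane $T_q\Sigma$ is $\ker(S\mathbf q,\cdot)$, and $\omega(\cdot,X\mathbf q)=-(\omega X\mathbf q,\cdot)$ has the same kernel. Hence $q'=\mathrm{span}\{X\mathbf q\}$, which is Proposition~\ref{prop:SGrad}. The polar surface is therefore $X(\Sigma)$, and $\mathbf v=X\mathbf q$ lies on it iff $(SX^{-1}\mathbf v,X^{-1}\mathbf v)=0$, i.e.\ $S'=(X^{-1})^*SX^{-1}$, equivalently $X^*S'X=S$. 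Substituting $X=\omega^{-1}S$ and $(\omega^{-1})^*=-\omega^{-1}$ gives $S'=-\omega S^{-1}\omega$. The symmetric roles of $\Sigma$ and $\Sigma'$ follow since $X'=\omega^{-1}S'=-S^{-1}\omega=-X^{-1}$, which generates the same family of maps.

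For the map, write the correspondence as $\mathbf Z=\mathbf q+\mu X\mathbf q$ and $\mathbf Z'=\mathbf q-\mu X\mathbf q$ with $\mathbf q\in\Sigma$ and $\mu\in\R$. Eliminating $\mathbf q$ gives $\mathbf Z'=(1-\mu X)(1+\mu X)^{-1}\mathbf Z$ whenever $-1/\mu$ is not an eigenvalue. The key claim is that $\mu$ (up to sign) is determined by $Z$ through the pencil relation. To check it, I compute $(S\mathbf Z,\mathbf Z)$ and $(S'\mathbf Z,\mathbf Z)$ in terms of $\mathbf q$:
\begin{itemize}
\item $(S\mathbf q,X\mathbf q)=\omega(X\mathbf q,X\mathbf q)$-type terms vanish by skew-symmetry, since $S X$ is skew because $X\in\mathfrak{sp}$.
\item $(S\mathbf q,\mathbf q)=0$ because $q\in\Sigma$.
\end{itemize}
Hence $(S\mathbf Z,\mathbf Z)=\mu^2(SX\mathbf q,X\mathbf q)$. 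Similarly $(S'\mathbf q,\mathbf q)=(S'X X^{-1}\mathbf q,\ldots)$. Using $X^*S'X=S$ one finds $(S'X\mathbf q,X\mathbf q)=(S\mathbf q,\mathbf q)=0$, and the cross terms vanish, so $(S'\mathbf Z,\mathbf Z)=(S'\mathbf q,\mathbf q)$. I then expect an identity $(SX\mathbf q,X\mathbf q)=\det X\cdot(S'\mathbf q,\mathbf q)$ up to the correct normalization. This is the main obstacle: in dimension $4$ it should come from Cayley--Hamilton for $X\in\mathfrak{sp}_4$, namely $X^4+aX^2+\det X=0$, so $X^{-1}$ is a polynomial in $X$. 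Then $(S'\mathbf q,\mathbf q)=(SX^{-1}\mathbf q,X^{-1}\mathbf q)$ can be rewritten via $X^{-2}=-(X^2+a)/\det X$, using that $(SX^{k}\mathbf q,\mathbf q)$ vanishes for odd $k$ and that $(S\mathbf q,\mathbf q)=0$. Granting the identity, $(S\mathbf Z,\mathbf Z)=\mu^2\det X\,(S'\mathbf Z,\mathbf Z)$, so $Z\in I_{\mu^2}$ with $\sigma=|\mu|$. The same computation applies to $\mathbf Z'$, since only $\mu^2$ appears, and this proves invariance.

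Conversely, given $Z$ on $I_{\sigma^2}$ with $\sigma>0$, take $\mu=\pm\sigma$ and set $\mathbf q=(1+\mu X)^{-1}\mathbf Z$. Reversing the computation gives $(S\mathbf q,\mathbf q)\propto(S\mathbf Z,\mathbf Z)-\sigma^2\det X(S'\mathbf Z,\mathbf Z)=0$, so $q\in\Sigma$. This shows the domain is exactly the union of the leaves with $\sigma\in(0,\infty)$. The two choices of sign give the two correspondents, namely the forward map $(1-\sigma X)(1+\sigma X)^{-1}$ and its inverse. Because $(1-\sigma X)(1+\sigma X)^{-1}$ is a function of $X$ and $\sigma$ alone, it is a single linear map on each leaf, and it is a symplectic Cayley transform, consistent with Theorem~\ref{thm:ContTransf}.
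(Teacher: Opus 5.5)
The step that carries the proof is missing, and when it is carried out it gives the opposite sign. Your outline matches the paper's: polar surface $=X(\Sigma)$, $\mathbf Z=(1\pm\mu X)\mathbf q$, cross terms killed because $SX^{k}$ is skew for odd $k$, then one genuinely four-dimensional identity. Cayley--Hamilton is a fine substitute for the paper's computation in the basis $X^{-1}\mathbf q,\mathbf q,X\mathbf q,X^2\mathbf q$.

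\textbf{The gap.} You never prove that identity. You ``expect'' $(SX\mathbf q,X\mathbf q)=\det X\,(S'\mathbf q,\mathbf q)$ and then grant it, although it is the only non-formal step. Here is what your own sketch gives:
\begin{itemize}
\item From $X^*S=-SX$ you get $(SX\mathbf q,X\mathbf q)=-(SX^2\mathbf q,\mathbf q)$ and $S'=(X^{-1})^*SX^{-1}=-SX^{-2}$.
\item Substituting $X^{-2}=-(X^2+a)/\det X$ then yields, for \emph{arbitrary} $\mathbf q$ and $\mathbf Z=(1+\mu X)\mathbf q$,
\[
(S\mathbf Z,\mathbf Z)+\mu^2\det X\,(S'\mathbf Z,\mathbf Z)=(1+a\mu^2+\det X\,\mu^4)(S\mathbf q,\mathbf q)=\det(1+\mu X)\,(S\mathbf q,\mathbf q).
\]
\end{itemize}
So for $q\in\Sigma$ one gets $(S\mathbf Z,\mathbf Z)=-\mu^2\det X\,(S'\mathbf Z,\mathbf Z)$, and your conclusion with $+\mu^2\det X$ is false.

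As a sanity check, take the Clifford torus $S=\mathrm{diag}(1,1,-c,-c)$. There $\det X=c^2$ and $S'=\mathrm{diag}(1,1,-1/c,-1/c)$. With $\mathbf q=(\sqrt c,0,1,0)$ and $X\mathbf q=\pm(0,-\sqrt c,0,c)$ one gets $(S\mathbf Z,\mathbf Z)/(S'\mathbf Z,\mathbf Z)=-c^2\mu^2<0$. So orbits lie between $\Sigma_c$ and $\Sigma'$, as stated in \S\ref{sec:ClTori} and Proposition~\ref{prop:ClTori}.

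\textbf{Consequences.} The leaves should read $(S\mathbf v,\mathbf v)=-\sigma^2\det X\,(S'\mathbf v,\mathbf v)$. The paper's proof has the same slip: its evaluations $(S\mathbf q',\mathbf q')=\omega(X^3\mathbf q,\mathbf q)$ and $(S'\mathbf q,\mathbf q)=\omega(\mathbf q,X^{-1}\mathbf q)$ cannot both hold under a single convention identifying $\omega$ with a map $V\to V^*$. Its explicit Clifford-torus and ellipsoid formulas are consistent with the minus sign.

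The displayed identity also repairs your converse. ``Reversing the computation'' is not literally possible, because your forward computation discarded every $(S\mathbf q,\mathbf q)$ term. But the identity holds for all $\mathbf q$, and $\det(1+\mu X)=\det(1-\mu X)\neq 0$ is exactly the eigenvalue hypothesis. Hence $Z\in I_{\mu^2}$ if and only if $q\in\Sigma$, and the same holds for $\mathbf Z'$.

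The remaining parts of your argument are fine: the formula for $S'$, the observation $X'=-X^{-1}$ for the polar table, and the Cayley-transform description of the map.
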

\begin{proof}
    The polar surface of $\Sigma$ is the locus $\Sigma' = \{ \text{span}\{ X\boldsymbol{q} \}: \text{span}\{ \boldsymbol{q}\} \in\Sigma\}$ so that $S'$ is as claimed. A point $Z = \text{span}\{\boldsymbol{Z}\}$ is in outer contact correspondence with $Z'$ over $\Sigma$ if and only if
    \[ \boldsymbol{Z} = \boldsymbol{q} + \sigma \boldsymbol{q'} = (id + \sigma X)\boldsymbol{q} \]
    for some $q = \text{span}\{ \boldsymbol{q}\}\in \Sigma$ and $\sigma\in\R$, in which case $Z' = \text{span}\{ (id - \sigma X)\boldsymbol{q} \}$. It holds that $q\in \Sigma$ if and only if:
    \[ (S\boldsymbol{Z},\boldsymbol{Z}) = \sigma^2 (S\boldsymbol{q'}, \boldsymbol{q'}) = \sigma^2 \omega(X^3\boldsymbol{q}, \boldsymbol{q}), ~~(S'\boldsymbol{Z}, \boldsymbol{Z}) = (S'\boldsymbol{q}, \boldsymbol{q}) = \omega(\boldsymbol{q},X^{-1}\boldsymbol{q}).  \]
    Likewise, we compute $(S\boldsymbol{Z'}, \boldsymbol{Z'}) =  \sigma^2 (S\boldsymbol{q'}, \boldsymbol{q'}) = (S\boldsymbol{Z}, \boldsymbol{Z})$ and $(S'\boldsymbol{Z'}, \boldsymbol{Z'}) =  (S'\boldsymbol{q}, \boldsymbol{q})  = (S'\boldsymbol{Z}, \boldsymbol{Z})$). Thus the outer contact billiards over a quadratic table admit the integral:
    \[ I(x) = \frac{(S{\bf x}, {\bf x})}{(S'{\bf x},{\bf x})}, ~~~x = \text{span}\{{\bf x}\} \in \mathbb{RP}^3 \]
    or, in other words, the pencil of quadratics through $\Sigma,\Sigma'$ are invariant. To describe the dynamics on each leaf, we restrict to the low-dimensional case. Computing in the basis $X^{-1}\boldsymbol{q}, \boldsymbol{q}, X\boldsymbol{q}, X^2\boldsymbol{q}$ we see
    \[ \omega(X^3\boldsymbol{q}, \boldsymbol{q}) = \det X ~\omega(\boldsymbol{q},X^{-1}\boldsymbol{q})\]
    So that $I = \sigma^2 \det X = cst.$ (for fixed $X \in\mathfrak{sp}_4(\R)$) and the dynamics on this leaf is by iterating the fixed projective transformation stated above ($\sigma = cst.$).
\end{proof}
\begin{rmk}
\label{rmk:quadrics-higher-dim}
    Our explicit description above of the dynamics on the pencil of invariant quadratics is special to the low-dimensional case $(\mathbb{RP}^3, \xi)$. In arbitrary dimensions we still have the invariant pencil of quadratics through $\Sigma,\Sigma'$:
    \[ cst. = I(Z) =  \frac{(S\boldsymbol{Z}, \boldsymbol{Z})}{(S'\boldsymbol{Z},\boldsymbol{Z})} = I(Z'). \]
    However, the relation between the integral $I$ to the dynamics on each leaf (i.e.~the parameter $\sigma$ above) may differ from that in Proposition \ref{prop:QuadR4}, where we had simply $cst.  = \det X  = \frac{I}{\sigma^2}$.

    Namely, outer contact billiards over quadratics in higher dimensions will generally admit more integrals. Moreover, on each leaf (i.e.~fixed level of the integrals), there will not typically be a traditional dynamics but a `branched dynamics', where each point is in contact correspondence with {\em more} than just {\em two} (past/future) points.

    We can already see this difference for the quadratic hypersurface
    \[ \Sigma = \{ xu + ayv + bzw = 0\}\subset\mathbb{RP}^5, ~~\omega = du\wedge dx + dv\wedge dy + dw\wedge dz. \]
    Through a general point in $\mathbb{RP}^5$ there pass up to {\em four} characteristic lines of $\Sigma$, and the outer contact billiards over $\Sigma$ admit {\em two} independent integrals. More explicitly,
   the outer contact billiards over $\Sigma$ admits the general integral $I$ from Proposition \ref{prop:QuadR4} given by:
        \[ I = \dfrac{xu + ayv + bzw}{xu + \frac{yv}{a} + \frac{zw}{b}}. \]
        However, there are more integrals; in fact, we have individually the integrals:
        \[ I_1 = \frac{yv}{xu}, ~~I_2 = \frac{zw}{xu} \]
        so that $I = \frac{1 + a I_1 + b I_2}{1 + I_1/a + I_2/b}$.  On each leaf $I_1 = cst., I_2 = cst.$, the correspondents are given through 
        \[ (x:u:y:v:z:w)\longleftrightarrow  \left(\frac{1-\sigma}{1+\sigma}x: \frac{1+\sigma}{1-\sigma}u:\frac{1-a\sigma}{1+a\sigma}y:\frac{1+a\sigma}{1-a\sigma}v : \frac{1-b\sigma}{1+b\sigma}z:\frac{1+b\sigma}{1-b\sigma}w\right).\]
        where $\sigma$ may take up to four values and, modulo signs, up to {\em two} values. Namely, any root of:
        \[  a^2 b^2(\sigma)^4 - \left(\frac{(a^2 + b^2) + a(1+b^2) I_1 + b(1+a^2)I_2}{1 + \frac{I_1}{a} + \frac{I_2}{b}}\right) (\sigma)^2 + I = 0. \]
        So in higher dimensions, already for quadratic hypersurfaces, the variety of possibilities to describe has become much richer.
\end{rmk}

\subsection{Explicit formulas}\label{sec:QuadExs}

For completeness, we state here explicit formulas for the outer contact billiards induced by the remaining types of quadratic tables in $(\mathbb{RP}^3,\xi)$, only noting that such tables are also integrable in the same sense as the previous examples of quadratic tables.


For item 2 from Proposition \ref{prop:QuadRP3}:

\begin{prop} \label{prop:HypTori}
The domain of the outer contact billiard map over the hyperbolic torus $\Sigma_c = \{ cxy = uv\}\subset\mathbb{RP}^3$, with $c>0, c\ne 1$, is the region foliated by the quadratics:
\[ I_{\sigma^2} = \left\{ \sigma^2=\dfrac{uv - cxy}{c(cuv -xy)} \right\} \subset\mathbb{RP}^3, \]
where $\sigma \in (0,\infty)$. The singular tangent planes of $\Sigma_c$ comprise the degenerate quadratics $I_1, I_{1/c^2}$. Each one of these quadratic surfaces is invariant under the outer contact billiard map over $\Sigma_c$. When $\sigma^2\ne 1, 1/c^2$, the map is given, on $I_{\sigma^2}$ by:
    \[ I_{\sigma^2}\ni (x:y:u:v)\mapsto (x':y':u':v') \in I_{\sigma^2} \]
    with
    \[ \begin{pmatrix}
    x' \\ y' \\ u' \\ v'
    \end{pmatrix} = \begin{pmatrix}
        \lambda_1 & 0 & 0 & 0 \\
        0 & \frac{1}{\lambda_1} & 0 & 0 \\
        0 & 0 & \lambda_2 & 0 \\
        0 & 0 & 0 & \frac{1}{\lambda_2}
    \end{pmatrix}\begin{pmatrix}
    x \\ y \\ u \\ v
    \end{pmatrix},\]
    where $\lambda_1 = \frac{1 + c\sigma}{1- c\sigma}, \lambda_2 = \frac{1-\sigma}{1+\sigma}$, for $\sigma = \sqrt{\sigma^2}>0$, are constant on $I_{\sigma^2}$. Note that since $\sigma\ne 0,  1, 1/c$, we have $\lambda_j\in \R\backslash \{0,\pm 1\}$.
    In particular, this implies that $\Sigma_c$ has no periodic orbits.
\end{prop}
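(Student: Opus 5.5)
The plan is to specialize Proposition \ref{prop:QuadR4} to $Q = 2(cxy - uv)$ with $\omega = dx\wedge dy + du\wedge dv$. First we compute the generator $X = \omega^{-1}S$ (equivalently, the symplectic gradient of Proposition \ref{prop:SGrad}). Up to sign conventions, $X_Q(x,y,u,v) = (cx,\,-cy,\,-u,\,v)$, a diagonal element of $\mathfrak{sp}_4(\R)$ with eigenvalues $\pm c, \pm 1$, so $\det X = c^2$. The contact point of $q=(x:y:u:v)$ is then $q' = (cx:-cy:-u:v)$. The polar surface is obtained by substituting $X^{-1}{\bf v}$ into $Q$, which gives $\Sigma' = \{xy/c = uv\}$, i.e.\ $\{xy = c\,uv\}$ up to scaling.

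Next, invoking Proposition \ref{prop:QuadR4}, the invariant leaves are $(S{\bf v},{\bf v}) = \sigma^2 \det X\,(S'{\bf v},{\bf v})$. We match this with the stated form $\sigma^2 = (uv-cxy)/(c(cuv-xy))$, fixing signs and normalizations of $S'$ by direct check. A more robust route is to verify invariance directly rather than through the general formula. Write ${\bf Z} = (I+\sigma X){\bf q}$ and ${\bf Z'} = (I-\sigma X){\bf q}$ with ${\bf q}=(x_0,y_0,u_0,v_0)\in\Sigma$. Then $cZ_xZ_y - Z_uZ_v = c(1-c^2\sigma^2)x_0y_0 - (1-\sigma^2)u_0v_0$, and similarly for the other pencil member. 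Using $cx_0y_0 = u_0v_0$, the ratio reduces to a function of $\sigma$ alone. This simultaneously shows that $\sigma$ is constant on each leaf and yields the stated expression.

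The map on a leaf is $(I-\sigma X)(I+\sigma X)^{-1} = \mathrm{diag}\bigl(\tfrac{1-c\sigma}{1+c\sigma},\tfrac{1+c\sigma}{1-c\sigma},\tfrac{1+\sigma}{1-\sigma},\tfrac{1-\sigma}{1+\sigma}\bigr)$. Replacing $\sigma$ by $-\sigma$ (both signs give the two correspondents, i.e.\ the inverse map) produces exactly $\mathrm{diag}(\lambda_1,1/\lambda_1,\lambda_2,1/\lambda_2)$. The formula breaks down precisely when $1/\sigma$ is an eigenvalue, i.e.\ $\sigma^2 \in \{1, 1/c^2\}$. At these values the pencil members degenerate: substituting $\sigma^2=1$ gives a form that factors (as $(c-1)$ times a product of linear forms after using $c\ne 1$) into pairs of planes. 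We identify these planes with the tangent planes at singular points, where $q'=q$, i.e.\ where $X{\bf q}\parallel{\bf q}$; these are the coordinate points, with $\xi_q = T_q\Sigma$ coordinate hyperplanes. I expect this bookkeeping (sign conventions of $S'$, identifying which degenerate members are the singular tangent planes, and showing the domain is exactly the union of leaves with $\sigma\in(0,\infty)$, meaning the pencil ratio has a consistent sign there) to be the main, though routine, obstacle.

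Finally, for the absence of periodic orbits: on a leaf with $\sigma\neq 0,1,1/c$ we have $\lambda_1,\lambda_2\in\R\setminus\{0,\pm1\}$. An orbit is periodic iff $\mathrm{diag}(\lambda_1^k,\lambda_1^{-k},\lambda_2^k,\lambda_2^{-k})$ fixes $[{\bf Z}]$ projectively. Every projective fixed point lies in an eigenspace, and since $|\lambda_1^{\pm k}|,|\lambda_2^{\pm k}|$ are distinct from each other except possibly $\lambda_1^{k}=\lambda_2^{\pm k}$, the eigenspaces are at most $2$-dimensional coordinate spans. Points on these coordinate lines and points would lie on $\Sigma$ or on the degenerate leaves, so they are excluded. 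Hence there are no periodic orbits off these exceptional sets.
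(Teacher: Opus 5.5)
Your proposal is correct and follows the paper's route: Proposition~\ref{prop:HypTori} is the specialization of Proposition~\ref{prop:QuadR4} to the diagonal generator $X=\pm\mathrm{diag}(c,-c,-1,1)$. Your direct expansion at $\mathbf{Z}=(I+\sigma X)\mathbf{q}$ is a good substitute for reading the leaves off the general formula, which taken literally gives the opposite sign for the pencil ratio here; it holds for arbitrary $\mathbf{q}$ and gives $(uv-cxy)-\sigma^2c(cuv-xy)=(1-\sigma^2)(1-c^2\sigma^2)(u_0v_0-cx_0y_0)$, hence both the invariance of the leaves and the converse inclusion for the domain. The only slip is cosmetic: the $\sigma^2=1$ member is $(1-c^2)\,uv$, not $(c-1)$ times a product of linear forms.
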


\begin{rmk}
    Here the case when $c = 1$ is exceptional: we have a `self-dual' torus ruled by characteristic lines. There are two ruling lines which are singular (and transversal to the characteristic ruling lines). In this case, the limiting behavior on these singular lines is not completely trivial; it is a 2-periodic map. 
\end{rmk}

\begin{rmk}
    Although Proposition \ref{prop:HypTori} describes the domain as foliated by invariant hyperbolic tori, the closure of these leaves is not disjoint. Any two leaves intersect along four projective lines
    \[(x: 0: u :0),\;  (x: 0 : 0 :v), \;  (0: y : u :0), \; (0: y: 0: v),\]
    all contained in the table $\Sigma_c$. Since these lines are entirely inside the billiard table, removing $\Sigma_c, \Sigma_c'$ leaves a genuine foliation of the billiard domain.
\end{rmk}





Next, item 5 from Proposition \ref{prop:QuadRP3}:

\begin{prop}
\label{prop:LoxTori}
The domain of the outer contact billiard map over the loxodromic torus $\Sigma_c=\{c(xy+uv) = yu -xv \}\subset \mathbb{RP}^3$ is the region foliated by the quadrics:
\[I_{\sigma^2}=\left\{ (1 +c^2)\sigma^2 = \frac{c(xy + uv) + xv - yu}{c(xy + uv) -xv + yu}  \right\}\subset \mathbb{RP}^3\]
for $\sigma\in (0,\infty)$. Each of these quadratic surfaces is invariant under the outer contact billiard map over $\Sigma_c$. On each leaf, the map is given by:
\[I_{\sigma^2} \ni (x: y: u: v) \mapsto (x':y': u': v') \in I_{\sigma^2}\]
by
\[\begin{pmatrix}
    x'\\
    y'\\
    u'\\
    v'
\end{pmatrix} =\begin{pmatrix}
    \rho\cos\varphi & 0 & -\rho \sin\varphi & 0\\
    0  &\frac{\cos\varphi}{\rho} & 0 & -\frac{\sin\varphi}{\rho} \\
    \rho\sin\varphi & 0 & \rho \cos\varphi & 0 \\
    0 & \frac{\sin\varphi}{\rho} & 0 & \frac{\cos\varphi}{\rho}
\end{pmatrix} \begin{pmatrix}
    x \\
    y \\
    u \\
    v
\end{pmatrix},\]
where $\rho, \varphi$ are constant on $I_{\sigma^2}$, determined through:
\[ \rho = \sqrt{1 + \frac{4\sigma c}{\sigma^2 + (1-\sigma c)^2}}, ~~(\cos\varphi, \sin\varphi) = \frac{(1 - (1+c^2)\sigma^2, 2\sigma)}{\sqrt{ (1 - (1 + c^2)\sigma^2)^2 + 4\sigma^2}}, \]
and there are no periodic orbits.

\end{prop}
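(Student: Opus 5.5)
The plan is to read this off from Proposition~\ref{prop:QuadR4}, exactly as for the earlier quadratic examples. Write the table as $\Sigma_c=\{Q=0\}$ with $Q=c(xy+uv)-yu+xv$. Let $S$ be its symmetric matrix, $X=\omega^{-1}S\in\mathfrak{sp}_4(\R)$ the generator, and $S'=-\omega S^{-1}\omega$ the polar quadric.

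\textbf{Step 1: the polar surface and the foliation.} By Proposition~\ref{prop:SGrad}, $X\boldsymbol{q}$ is (up to a constant) the symplectic gradient $X_Q(\boldsymbol q)$. I will compute $X$ directly in the basis $(x,y,u,v)$. The eigenvalues should be $\pm c\pm i$, the loxodromic type of item~5 in Proposition~\ref{prop:QuadRP3}, so $\det X=(1+c^2)^2$. I then compute $S'$ and check that $(S'\boldsymbol v,\boldsymbol v)$ is proportional to $c(xy+uv)+yu-xv$. Substituting into $I_{\sigma^2}=\{(S\boldsymbol v,\boldsymbol v)=\sigma^2\det X\,(S'\boldsymbol v,\boldsymbol v)\}$ and normalizing the constants should give the stated pencil, with $(1+c^2)\sigma^2$ on the left. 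Invariance of each leaf is then immediate from Proposition~\ref{prop:QuadR4}.

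\textbf{Step 2: the map on a leaf.} The map is the Cayley transform $(id-\sigma X)(id+\sigma X)^{-1}$, up to overall scale. It helps to reorder coordinates as $(x,u\,|\,y,v)$. The two Lagrangian planes $\{y=v=0\}$ and $\{x=u=0\}$ should be $X$-invariant. On the first, $X$ acts as $\pm(c\,\mathrm{id}+J)$; on the second, as $\mp(c\,\mathrm{id}-J)$, where $J$ is rotation by $\pi/2$, up to sign conventions fixed by the computation. Identifying each plane with $\C$, the map becomes multiplication by $\frac{1-\sigma(c+i)}{1+\sigma(c+i)}$ on one block and by $\frac{1+\sigma(c-i)}{1-\sigma(c-i)}$ on the other. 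These two numbers have the same argument and reciprocal moduli. Rescaling projectively by the product of the two moduli, I will check three things:
\begin{itemize}
\item the common argument is $\varphi$, whose cosine and sine come out as $(1-(1+c^2)\sigma^2,\,2\sigma)$ up to normalization;
\item the modulus ratio is $\rho$, with $\rho^2=\frac{(1+\sigma c)^2+\sigma^2}{(1-\sigma c)^2+\sigma^2}=1+\frac{4\sigma c}{\sigma^2+(1-\sigma c)^2}$;
\item the result is exactly the stated matrix.
\end{itemize}
Since $\det X\neq0$ and the eigenvalues are non-real, $1/\sigma$ is never an eigenvalue of $X$, so the map is defined on every leaf.

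\textbf{Step 3: no periodic orbits.} For $c\neq0$ and $\sigma>0$ we have $\rho\neq1$. The $n$-th iterate multiplies the $(x,u)$ block by $\rho^n R_{n\varphi}$ and the $(y,v)$ block by $\rho^{-n}R_{n\varphi}$. Projective periodicity therefore forces one block to vanish, since otherwise $\rho^{2n}=1$. But $Q$ vanishes identically on $\{y=v=0\}$ and on $\{x=u=0\}$, so such points lie on $\Sigma_c$ and are not in the domain.

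The main obstacle is the sign and normalization bookkeeping in Steps 1–2. This means matching the paper's $\sigma$ and the factor $(1+c^2)$ in the leaf equation, and getting the orientation of $\varphi$ right. I would fix these by testing on one explicit point before trusting the general formula.
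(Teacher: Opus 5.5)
Your route is the paper's own. The paper states Proposition~\ref{prop:LoxTori} without a separate proof, as an instance of Proposition~\ref{prop:QuadR4}, and your splitting into $\zeta=x+iu$, $\eta=y+iv$ is exactly the remark that follows it. Step~3 is correct. \textbf{Step~1 fails as written:} substituting into the leaf equation of Proposition~\ref{prop:QuadR4} does not give the stated pencil. No normalization of constants can fix this, because the discrepancy is a sign.

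Put $P:=c(xy+uv)-xv+yu$, and let $X_Q$ be the symplectic gradient of $Q=c(xy+uv)-yu+xv$ (Proposition~\ref{prop:SGrad}). It acts by $-(c+i)$ on $\zeta$ and by $c-i$ on $\eta$, and $Q=\mathrm{Re}\bigl((c-i)\bar\zeta\eta\bigr)$, $P=\mathrm{Re}\bigl((c+i)\bar\zeta\eta\bigr)$.

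\emph{Why the substitution fails.} With $X=\omega^{-1}S=-\tfrac12 X_Q$ one gets $\det X=(1+c^2)^2/16>0$ and $(S'\boldsymbol v,\boldsymbol v)=Q(X^{-1}\boldsymbol v)=-4P/(1+c^2)$. So $(S\boldsymbol v,\boldsymbol v)=\sigma^2\det X\,(S'\boldsymbol v,\boldsymbol v)$ reads $Q/P=-(1+c^2)\sigma^2/4<0$, which is the complementary region. The true domain has $Q/P>0$. For example, take $c=1$, $q=(1,1,0,-1)\in\Sigma_c$ and $Z=q+\sigma X_Q(q)=(1-\sigma,1,-\sigma,-1-2\sigma)$; then $Q(Z)=4\sigma^2$ and $P(Z)=2$.

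\emph{Where the sign goes wrong.} The leaf equation in Proposition~\ref{prop:QuadR4} is off by a sign. One has $(SX\boldsymbol q,X\boldsymbol q)=\omega(X^2\boldsymbol q,X\boldsymbol q)=-\omega(X^3\boldsymbol q,\boldsymbol q)$. Cayley--Hamilton in $\mathfrak{sp}_4(\R)$, namely $X^3=-a_2X-(\det X)X^{-1}$, then yields $(S\boldsymbol Z,\boldsymbol Z)=-\sigma^2\det X\,(S'\boldsymbol Z,\boldsymbol Z)$.

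\emph{The fix.} Given your blocks, the cleanest repair is a direct computation. For $Z=(\mathrm{id}+\sigma X_Q)q$ one has $\bar\zeta_Z\eta_Z=\bigl(1-\sigma^2(c-i)^2\bigr)\bar\zeta_q\eta_q$. Using $(c-i)-(1+c^2)\sigma^2(c+i)=(c-i)\bigl(1-\sigma^2(c+i)^2\bigr)$, this gives
\[ Q(Z)-(1+c^2)\sigma^2P(Z)=\bigl|1-\sigma^2(c-i)^2\bigr|^2\,Q(q). \]
Since $1-\sigma^2(c-i)^2\neq0$ for real $\sigma$ and $c\neq0$, it follows that $Z\in I_{\sigma^2}$ iff $(\mathrm{id}+\sigma X_Q)^{-1}Z\in\Sigma_c$, iff $(\mathrm{id}-\sigma X_Q)^{-1}Z\in\Sigma_c$. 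This gives the domain, the invariance, and the exact factor $(1+c^2)\sigma^2$ at once.

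\emph{Orientation in Step~2.} With this same $\sigma$, the displayed matrix is $(\mathrm{id}-\sigma X_Q)(\mathrm{id}+\sigma X_Q)^{-1}$. Its $\zeta$-multiplier is $\frac{1+\sigma(c+i)}{1-\sigma(c+i)}=\rho e^{i\varphi}$, and its $\eta$-multiplier is $\rho^{-1}e^{i\varphi}$. Your $\frac{1-\sigma(c+i)}{1+\sigma(c+i)}$ has modulus $\rho^{-1}$ and argument $-\varphi$, so it is the inverse branch; replace $\sigma$ by $-\sigma$ there. The two block moduli are $\rho^{\pm1}$, so no projective rescaling is needed.
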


\begin{rmk}
    The map is more compact in the complex variables $x+iu, y+iv$ where it is given by the (real) projectivization of
    \[ (x+iu, y + iv)\mapsto (\rho e^{i\varphi}(x+iu),  \rho^{-1}e^{i\varphi}(y+iv)).\]
\end{rmk}

\begin{rmk}
    Similarly to the case of hyperbolic tori, although Proposition \ref{prop:LoxTori} describes the domain as foliated by invariant loxodromic tori, these leaves are not disjoint. This time, any two leaves intersect along two disjoint projective lines
    \[(x: 0: u :0)\; \text{ and }  \; (0: y: 0: v),\]
    both contained in the table $\Sigma_c$. Since these lines are entirely inside the billiard table, removing $\Sigma_c, \Sigma_c'$ leaves a genuine foliation of the billiard domain and the table has no singular points.
\end{rmk}



Lastly, item 6 from Proposition \ref{prop:QuadRP3}:

\begin{prop}
\label{prop:ParTori}
    The domain of the outer contact billiard map over the parabolic torus $\Sigma = \{xy+uv = uy\}\subset \mathbb{RP}^3$ is the region foliated by the quadrics  
    \[I_{\sigma^2}=\left\{ \sigma^2 = \frac{xy + uv + uy}{xy + uv - uy} \right\}\subset \mathbb{RP}^3\]
    for $\sigma\in (0,\infty)$. Each of the quadratic surfaces is invariant under the outer contact billiard map over $\Sigma$. On each leaf, the map is given by 
    \[ I_{\sigma^2}\ni (x: y: u: v)\mapsto (x':y':u':v')\in I_{\sigma^2}\]
    as
    \[\begin{pmatrix}
        x'\\
        y'\\
        u'\\
        v'
    \end{pmatrix} = \begin{pmatrix}
        \lambda & 0 & -\frac12(\lambda^2 - 1) & 0\\
        0 & \frac{1}{\lambda} & 0 & 0\\
        0 & 0 & \lambda & 0\\
        0 & \frac12(1 - \frac{1}{\lambda^2}) & 0 & \frac{1}{\lambda}
    \end{pmatrix} \begin{pmatrix}
        x \\
        y \\
        u \\
        v
    \end{pmatrix},\]
    where $\lambda = \frac{1+\sigma}{1-\sigma}$, is constant on $I_{\sigma^2}$. There are no periodic orbits. 
\end{prop}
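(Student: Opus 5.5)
We specialize Proposition \ref{prop:QuadR4} to $\Sigma = \{xy+uv = uy\}$, with $Q = xy+uv-uy$. In the basis $(x,y,u,v)$ with $\omega = dx\wedge dy + du\wedge dv$, we first compute the generator $X = \omega^{-1}S$ explicitly, up to the scaling conventions fixed in that proposition.

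\textbf{The generator.} By Proposition \ref{prop:SGrad}, the symplectic gradient of $Q$ is $X_Q(\boldsymbol q) = (-Q_y, Q_x, -Q_v, Q_u)$. Since $Q_x = y$, $Q_y = x-u$, $Q_u = v-y$ and $Q_v = u$, this gives
\[ X(x,y,u,v) = (u-x,\; y,\; -u,\; v-y). \]
Its characteristic polynomial is $(\lambda^2-1)^2$, consistent with item 6 of Proposition \ref{prop:QuadRP3} (a repeated real eigenvalue with a Jordan block). Hence $\det X = 1$.

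\textbf{The invariant leaves.} Next we compute $S' = -\omega S^{-1}\omega$. Equivalently, the polar quadric is $\Sigma' = \{ (S'\boldsymbol v,\boldsymbol v) = 0\}$, where $(S'\boldsymbol v,\boldsymbol v) = \omega(\boldsymbol q, X^{-1}\boldsymbol q)$ evaluated at $\boldsymbol v = \boldsymbol q$. Solving $X\boldsymbol w = \boldsymbol q$ gives
\[ X^{-1}(x,y,u,v) = (-x-u,\; y,\; -u,\; v+y). \]
Then
\[ \omega(\boldsymbol q, X^{-1}\boldsymbol q) = xy - y(-x-u) + u(v+y) - v(-u) = 2(xy+uv+uy). \]
Thus, up to scale, $\Sigma' = \{xy+uv+uy = 0\}$. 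Since $\det X = 1$, the pencil $I_{\sigma^2}$ of Proposition \ref{prop:QuadR4} reads $(xy+uv-uy) = \pm\sigma^2 (xy+uv+uy)$. The sign and normalization must be tracked carefully to match the stated convention $\sigma^2 = \frac{xy+uv+uy}{xy+uv-uy}$; this may simply amount to replacing $\sigma$ by $1/\sigma$, or $X$ by $-X$, which is harmless because the correspondence is symmetric under $\sigma\to-\sigma$.

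\textbf{The map on each leaf.} The map on $I_{\sigma^2}$ is $(id-\sigma X)(id+\sigma X)^{-1}$, which is the Cayley transform of $X$. To compute it, I would decompose $\R^4 = \text{span}\{\partial_x,\partial_u\}\oplus\text{span}\{\partial_y,\partial_v\}$, which is invariant under $X$.
\begin{itemize}
  \item On $(x,u)$, $X$ acts by the matrix $\begin{pmatrix}-1&1\\0&-1\end{pmatrix} = -I + N$, where $N$ is nilpotent.
  \item On $(y,v)$, $X$ acts by $\begin{pmatrix}1&0\\-1&1\end{pmatrix}$, a unipotent matrix.
\end{itemize}
For $A = aI+N$ with $N^2=0$, one has $f(A) = f(a)I + f'(a)N$. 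Take $f(a) = \frac{1-\sigma a}{1+\sigma a}$, so that $f'(a) = \frac{-2\sigma}{(1+\sigma a)^2}$.
\begin{itemize}
  \item On the $(x,u)$ block, with $a=-1$: $f(-1) = \frac{1+\sigma}{1-\sigma} = \lambda$ and $f'(-1) = \frac{-2\sigma}{(1-\sigma)^2}$. Using $\lambda^2-1 = \frac{4\sigma}{(1-\sigma)^2}$, this gives $f'(-1) = -\tfrac12(\lambda^2-1)$.
  \item On the $(y,v)$ block, with $a=1$: $f(1) = \frac1\lambda$ and the off-diagonal entry is $-\frac{-2\sigma}{(1+\sigma)^2} = \tfrac12\bigl(1-\lambda^{-2}\bigr)$.
\end{itemize}
These entries match the stated matrix. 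Validity requires $\sigma\neq\pm1$, that is, $1/\sigma$ is not an eigenvalue of $X$.

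\textbf{No periodic orbits.} The matrix is $\lambda$ times a unipotent on the first block and $\lambda^{-1}$ times a unipotent on the second. The case $\sigma = 0$ corresponds to $Z\in\Sigma$, which is excluded. For $\sigma\in(0,\infty)\setminus\{1\}$ we have $\lambda\neq\pm1$ and $\lambda \neq 0$, so the $n$-th power is
\[ \text{diag-blocks}\bigl(\lambda^n(I+nN_1),\; \lambda^{-n}(I+nN_2)\bigr). \]
A periodic point $[\boldsymbol Z]$ needs $M^n\boldsymbol Z = \mu\boldsymbol Z$. If both block components are nonzero, comparing moduli forces $\lambda^n = \pm\lambda^{-n}$, which is impossible. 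So $\boldsymbol Z$ lies in one block, and there projective periodicity of the unipotent part $I+nN$ forces $\boldsymbol Z$ into $\ker N$. These are the lines $(x:0:0:0)$ or $(0:0:0:v)$, which lie on $\Sigma$ and $\Sigma'$ rather than in the open domain.

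\textbf{Main obstacle.} I expect the main difficulty to be the bookkeeping of the sign and normalization conventions linking $\sigma$, the pencil parameter, and $\det X$. The remaining steps are direct computation.
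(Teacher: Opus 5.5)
Your route is the one the paper intends: the appendix gives no argument beyond specializing Proposition \ref{prop:QuadR4}. Your computations of $X=X_Q$, $X^{-1}$, $\Sigma'=\{xy+uv+uy=0\}$ and of the two $2\times 2$ Cayley blocks are correct. The gap is the step you deferred as bookkeeping. It cannot be closed, because the printed leaf label does not match the printed matrix.

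Write $Q=xy+uv-uy$, $Q'=xy+uv+uy$, $X=X_Q$, and $M(\lambda)$ for the displayed matrix. Do not use the general pencil formula, whose normalization and sign you left open. Expand directly instead: the cross terms vanish, $Q'(X\boldsymbol q)=-Q(\boldsymbol q)$, and $Q(X\boldsymbol q)=Q'(\boldsymbol q)-2Q(\boldsymbol q)$. Hence, for every $\boldsymbol q$ and $\boldsymbol Z=(\mathrm{id}+\sigma X)\boldsymbol q$,
\[ Q(\boldsymbol Z)-\sigma^2 Q'(\boldsymbol Z)=(1-\sigma^2)^2\,Q(\boldsymbol q). \]
So, for $\sigma^2\neq 1$, the matrix $M(\lambda)$ with $\lambda=\frac{1+\sigma}{1-\sigma}$ acts on the leaf $\{\sigma^2=\frac{xy+uv-uy}{xy+uv+uy}\}$. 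Your blocks correctly identify this matrix as $(\mathrm{id}-\sigma X)(\mathrm{id}+\sigma X)^{-1}$. The leaf is the reciprocal of the printed one.

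Neither of your proposed fixes bridges this. The substitution $X\mapsto -X$ (i.e.\ $\sigma\mapsto-\sigma$) only replaces $M(\lambda)$ by $M(\lambda)^{-1}=M(1/\lambda)$ and leaves the pencil unchanged. The substitution $\sigma\mapsto 1/\sigma$ replaces $\lambda$ by $-\lambda$. But $-M(-\lambda)$ has the same diagonal as $M(\lambda)$ and opposite off-diagonal entries, so it is projectively neither $M(\lambda)$ nor $M(1/\lambda)$.

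Here is a concrete check. Take $\boldsymbol q=(0,1,1,1)\in\Sigma$, so $X\boldsymbol q=(1,1,-1,0)$. Then $Z=(1:3:1:2)$ and $Z'=(-1:1:3:2)$ are in correspondence, $Q(Z)/Q'(Z)=1/4$, and indeed $M(3)Z=Z'$. The printed label instead gives $\sigma=2$ and $\lambda=-3$. The points $M(-3)^{\pm1}Z$ are $(-21:-3:-9:2)$ and $(1:-81:-3:-162)$. Neither is $Z'$ or the other correspondent $M(1/3)Z=(7:81:3:-54)$.

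What your argument actually proves is a corrected statement, in either of two equivalent forms:
\begin{itemize}
\item the leaves are $\{\sigma^2=\frac{xy+uv-uy}{xy+uv+uy}\}$, with the table in the numerator as in Propositions \ref{prop:TwistTori} and \ref{prop:LoxTori};
\item or the printed leaves are kept, with $\lambda=\frac{\sigma+1}{\sigma-1}$.
\end{itemize}
The printed label is what one gets by running Proposition \ref{prop:QuadR4} over $\Sigma'$, whose generator $X^{-1}=X_{Q'}$ differs from $X$ only in the sign of its nilpotent parts. Its Cayley transform is the printed matrix with the off-diagonal signs flipped. You should state this explicitly rather than appeal to a harmless reparametrization.

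The identity above also gives the domain claim directly. For $\sigma^2\neq 1$, $Z$ lies on the corrected leaf if and only if $(\mathrm{id}+\sigma X)^{-1}\boldsymbol Z\in\Sigma$, so its correspondents are exactly $M(\lambda)^{\pm1}Z$. The leaf $\sigma=1$, namely $\{uy=0\}$, is the union of the singular tangent planes $\{y=0\}=T_{(1:0:0:0)}\Sigma$ and $\{u=0\}=T_{(0:0:0:1)}\Sigma$. The formula does not apply there, and neither you nor the statement treat it.

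Your no-periodic-orbits argument is fine up to two small slips. First, the unipotent factors force $\mu=\lambda^n=\lambda^{-n}$ exactly, with no $\pm$. Second, the invariant planes $\{y=v=0\}$ and $\{x=u=0\}$ project to lines contained in $\Sigma\cap\Sigma'$. So all of those lines, not just the points $(1:0:0:0)$ and $(0:0:0:1)$, lie outside the domain.
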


\end{appendix}


\begin{thebibliography}{99}


\bibitem{albers2024outer} P.~Albers, A.~Chavez Caliz, S.~Tabachnikov. {\em Outer symplectic billiards.} arXiv preprint  arXiv:2409.07990 (2024) (To appear in Journal of Symplectic Geometry).

\bibitem{albers2026outer} P.~Albers, A.~Chavez Caliz, S.~Tabachnikov. {\em Outer symplectic billiard map at infinity.} Journal of Modern Dynamics, vol. 22, (2026): pp. 321-343.

\bibitem{EDSIV} V.I.~Arnold, A.B.~Givental, {\em Symplectic geometry}, Dynamical Systems IV: Symplectic geometry and its applications.
Vol.~4.~Springer Science \& Business Media, (2001).

\bibitem{diss-symp} L.~Baracco, O.~Bernardi, A.~Florio, A.~Nardi {\em Birkhoff attractors for dissipative symplectic billiards}, Journal of Dynamics and Differential Equations (2026): pp. 1-43.

\bibitem{dissipative-billiards} O.~Bernardi, A.~Florio, M.~Leguil {\em Birkhoff attractors of dissipative billiards}, Ergodic Theory and Dynamical Systems 45.4 (2025): pp. 898-1047.

\bibitem{ContRed} A.~Bravetti, C.~Jackman, D.~Sloan. {\em Scaling symmetries, contact reduction and Poincar\'e’s dream}. Journal of Physics A: Mathematical and Theoretical 56.43 (2023): 435203.

\bibitem{DiscHerg} A.~Bravetti, M.~Seri, M.~Vermeeren. {\em Contact variational integrators}. Journal of Physics A: Mathematical and Theoretical 52.44 (2019): 445206.

\bibitem{ContInt} A.~Bravetti, M.~Seri, F.~Zadra. {\em New directions for contact integrators}. International Conference on Geometric Science of Information. Cham: Springer International Publishing, (2021).

\bibitem{ChernSurfTry} S.S.~Chern, {\em Surface theory with Darboux and Bianchi}. Miscellanea Mathematica. Berlin, Heidelberg: Springer Berlin Heidelberg, (1991). 59-69.

\bibitem{ConOvsLagrConf} C.~Conley, V.~Ovsienko. {\em Lagrangian configurations and symplectic cross-ratios}. Mathematische Annalen 375.3 (2019): 1105-1145.

\bibitem{DarbouxII} G.~Darboux. {\em Le\c cons sur la th\'eorie g\'en\'erale des surfaces. Deuxi\`eme partie. Les congruences et les \'equations lin\'eaires aux d\'eriv\'ees partielles}. Gauthier-Villars, 1889.

\bibitem{gluck1983great} H.~Gluck, F.W.~Warner, {\em Great circle fibrations of the three-sphere.} Duke Math., J., 50.1 (1938): 107--132.

\bibitem{ivrii} V. Y. Ivrii, {\em Second term of the spectral asymptotic expansion for the {L}aplace-{B}eltrami operator on manifolds with
boundary.} Functional Analysis and its Applications, 14.2 (1980): pp. 98-106

\bibitem{pinball} R.~Markarian, E.J.~Pujals, M.~Sambarino, {\em Pinball billiards with dominated splitting.} Ergodic Theory and Dynamical Systems 30.6 (2010): pp. 1757-1786.

\bibitem{Mo1} J.~Moser. {\em Stable and random motions in dynamical systems.} Ann. of Math. Studies 77, Princeton Univ. Press, Princeton, NJ, 1973.

\bibitem{Mo2} J.~Moser. {\em Is the solar system stable?} Math. Intelligencer {\bf 1} (1978), 65--71.

\bibitem{neumann1959sharing} B.~Neumann, {\em Sharing ham and eggs}. Iota, Manchester University (1959): 14-18.

\bibitem{birkhoff-poritsky} H.~Poritsky, {\em The billiard ball problem on a table with a convex boundary–An illustrative
dynamical problem} Annals of Mathematics (1950): 446-470.

\bibitem{pottmann2001computational} H.~ Pottmann, J.~Wallner, {\em Computational line geometry}. Vol. 6. Berlin: Springer, (2001).

\bibitem{SloanDS} D.~Sloan. {\em Dynamical similarity}. Physical Review D 97.12 (2018): 123541.

\bibitem{tabachnikov1993geometry} S.~Tabachnikov, {\em Geometry of Lagrangian and Legendrian 2-web}. Differential Geometry and its Applications 3.3 (1993): 265-284.

\bibitem{tabachnikov1995dual} S.~Tabachnikov. {\em On the dual problem}. Advances in Mathematics 115.2 (1995): 221-249.

\bibitem{tabachnikovProjective} S.~Tabachnikov. {\em Introducing projective billiards}. Ergodic Theory and Dynamical Systems, 17(4), 957-976, (1997).

\bibitem{tabachnikov2003three}
S.~Tabachnikov. {\em On three-periodic trajectories of multi-dimensional dual billiards}. Algebraic \& Geometric Topology 3.2 (2003): 993-1004.

\bibitem{zhao2024mechanical} A.~Takeuchi, L.~Zhao, {\em Projective integrable mechanical billiards.} Nonlinearity 37.1 (2024): 015011.

\bibitem{WillNormForms} J.~Williamson, {\em On the algebraic problem concerning the normal forms of linear dynamical systems}. American journal of mathematics 58.1 (1936): 141-163.

\end{thebibliography}
\end{document}